\documentclass[leqno,11pt,a4paper]{amsart}
\setlength{\textwidth}{16cm}
\setlength{\oddsidemargin}{0cm}
\setlength{\evensidemargin}{0cm}

\usepackage{pdfsync}
\usepackage[usenames,dvipsnames]{color}
\usepackage{hyperref}
\usepackage{ragged2e}
\usepackage{multirow}
\newtheorem{thm}{Theorem}[section]
\newtheorem{lem}[thm]{Lemma}
\newtheorem{cor}[thm]{Corollary}
\newtheorem{prop}[thm]{Proposition}

\theoremstyle{definition}
\newtheorem{rem}[thm]{Remark}
\theoremstyle{definition}
\newtheorem{eg}[thm]{Example}
\theoremstyle{definition}
\newtheorem{defn}[thm]{Definition}
\numberwithin{equation}{section}
\newenvironment{acknowledge}{\bigskip\noindent\textbf{Acknowledgments.}}{}
\newcommand{\Z}{\mathbb{Z}}
\newcommand{\R}{\mathbb{R}}
\newcommand{\N}{\mathbb{Z}_{\ge 0}}
\newcommand{\OO}{\mathcal{O}}
\newcommand{\abs}[1]{\left\vert{#1}\right\vert}
\newcommand{\vol}[1]{\mathrm{vol}\!\left(#1\right)}
\newcommand{\floor}[1]{\left\lfloor{#1}\right\rfloor}
\newcommand{\roof}[1]{\left\lceil{#1}\right\rceil}
\renewcommand{\det}[1]{\mathrm{det}\!\left(#1\right)}
\begin{document}
\author[G.~Heged{\"u}s]{G{\'a}bor Heged{\"u}s}
\address{Johann Radon Institute for Computational and Applied Mathematics\\Austrian Academy of Sciences\\Altenbergerstra{\ss}e 69\\A-$4040$ Linz\\Austria}
\email{gabor.hegedues@oeaw.ac.at}
\author[A.~M.~Kasprzyk]{Alexander M.~Kasprzyk}
\address{Department of Mathematics\\Imperial College London\\London\ SW7 2AZ\\United Kingdom}
\email{a.m.kasprzyk@imperial.ac.uk}
\subjclass[2010]{52B20 (Primary); 52C07, 52B12 (Secondary)}
\title{The boundary volume of a lattice polytope}
\begin{abstract}
For a $d$-dimensional convex lattice polytope $P$, a formula for the boundary volume $\vol{\partial P}$ is derived in terms of the number of boundary lattice points on the first $\floor{d/2}$ dilations of $P$. As an application we give a necessary and sufficient condition for a polytope to be reflexive, and derive formulae for the $f$-vector of a smooth polytope in dimensions $3$, $4$, and $5$. We also give applications to reflexive order polytopes, and to the Birkhoff polytope.
\end{abstract}
\maketitle
\section{Introduction}\label{sec:introduction}
A \emph{lattice polytope} $P\subset \R^d$ is the convex hull of finitely many points in $\Z^d$. We shall assume throughout that $P$ is of maximum dimension, so that $\dim{P}=d$. Denote the boundary of $P$ by $\partial P$. The boundary volume $\vol{\partial P}$ is the volume of each facet of $P$ normalised with respect to the sublattice containing that facet, i.e.
$$\vol{\partial P}:=\sum_{F\text{ facet of }P}\frac{\mathrm{vol}_{d-1}(F)}{\det{\mathrm{aff}\,F\cap\Z^d}},$$
where $\mathrm{vol}_{d-1}(F)$ denotes the $(d-1)$-dimensional volume, and $\det{\mathrm{aff}\,F\cap\Z^d}$ is the determinant of the sublattice contained in the affine hull of $F$.

In two dimensions, the number of lattice points on the boundary of $P$ is equal to the boundary volume. In three dimensions there is a well-known relationship which can be derived directly from Euler's formula and Pick's Theorem (see, for example,~\cite[Proposition 10.2.3]{KasPhD}):

\begin{prop}\label{prop:dim_three_boundary_volume}
Let $P$ be a three-dimensional convex lattice polytope. Then
$$\vol{\partial P}=\abs{\partial P\cap\Z^3}-2.$$
\end{prop}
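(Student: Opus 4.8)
The plan is to apply Pick's Theorem facet-by-facet, sum the resulting equalities, and then invoke Euler's relation $V-E+F=2$ (where $V,E,F$ count the vertices, edges, and facets of $P$) to collapse the combinatorial error terms. First I would fix a facet $\Phi$ and work inside the two-dimensional lattice $\mathrm{aff}\,\Phi\cap\Z^3$. Since $\vol{\Phi}$ is by definition the Euclidean area of $\Phi$ divided by the covolume of this lattice, it is exactly the lattice-normalised area to which Pick's Theorem applies; writing $\iota_\Phi$ for the number of lattice points in the relative interior of $\Phi$ and $\beta_\Phi$ for the number on $\partial\Phi$, Pick's Theorem gives $\vol{\Phi}=\iota_\Phi+\beta_\Phi/2-1$. (This is the same normalisation used in the two-dimensional remark preceding the statement, where the analogous computation was carried out for edges.) Summing over the $F$ facets of $P$ yields
$$\vol{\partial P}=\sum_\Phi\iota_\Phi+\tfrac12\sum_\Phi\beta_\Phi-F.$$

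The substance of the argument is to rewrite the two sums in terms of global boundary data. I would partition $\partial P\cap\Z^3$ into the vertices of $P$, the lattice points in the relative interiors of edges, and the lattice points in the relative interiors of facets; writing $b_1$ and $b_2$ for the latter two counts, this gives $\abs{\partial P\cap\Z^3}=V+b_1+b_2$, while $\sum_\Phi\iota_\Phi=b_2$ by definition. For the remaining sum I would exploit the incidence structure of a $3$-polytope: every edge lies on exactly two facets, so each relative-interior edge point is counted twice in $\sum_\Phi\beta_\Phi$, contributing $2b_1$; and each vertex $v$ is counted once per facet through $v$, which equals the number of edges at $v$. Since summing the edge-degrees over all vertices counts each edge twice, the vertices contribute $2E$ in total, so $\sum_\Phi\beta_\Phi=2b_1+2E$ and hence $\tfrac12\sum_\Phi\beta_\Phi=b_1+E$.

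Substituting back gives $\vol{\partial P}=b_2+b_1+E-F=(\abs{\partial P\cap\Z^3}-V)+E-F$, and Euler's relation turns $-V+E-F$ into $-2$, which is the claim. The only real care needed is the bookkeeping of the second paragraph — tracking exactly how often each vertex and each edge-interior point is counted across the facets containing it — since everything else is a direct application of Pick's and Euler's theorems. A minor point to verify at the outset is that the vertices of $P$ are themselves lattice points (true because $P$ is a lattice polytope), so that they may legitimately be included in the boundary lattice-point count.
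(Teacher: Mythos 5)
Your proof is correct and follows exactly the route the paper indicates for this proposition: applying Pick's Theorem facet-by-facet in each facet's induced lattice and then collapsing the incidence counts via Euler's relation $V-E+F=2$. The one delicate point you flag --- that at each vertex of a $3$-polytope the number of incident facets equals the number of incident edges, so the vertex contributions sum to $2E$ --- is handled correctly, so nothing is missing.
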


We shall prove the following generalisation to arbitrary dimension:

\begin{thm}\label{thm:general_boundary_volume}
Let $P$ be a $d$-dimensional convex lattice polytope. Then
\begin{align}
\vol{\partial P}&=\frac{\det{\mathcal{A}}}{\det{\mathcal{D}}}\label{eq:matrix_boundary_volume}\\
&=\frac{1}{(d-1)!}\sum_{m=0}^n(-1)^{n+m}\left({d-1\choose n-m}+(-1)^{d-1}{d-1\choose n+m}\right)\abs{\partial(mP)\cap\Z^d},\label{eq:explicit_boundary_volume}
\end{align}
where $n:=\floor{d/2}$, $\abs{\partial(0P)\cap\Z^d}:=1$, and $\mathcal{A}$ and $\mathcal{D}$ are invertible $n\times n$ matrices defined by
\begin{align*}
\mathcal{A}_{ij}&:=\left\{\begin{array}{ll}
\abs{\partial(iP)\cap\Z^d}-2(d-2n),&\text{if }j=1,\\
i^{d-2j+1},&\text{otherwise};
\end{array}\right.\\
\mathcal{D}_{ij}&:=i^{d-2j+1}.
\end{align*}
\end{thm}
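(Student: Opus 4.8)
The starting point is Ehrhart theory. Write $L_P(m):=\abs{mP\cap\Z^d}$ for the Ehrhart polynomial, which has degree $d$; expanding $L_P(m)=\sum_{i=0}^d c_im^i$, one has $c_0=1$ and $c_{d-1}=\tfrac{1}{2}\vol{\partial P}$. Ehrhart--Macdonald reciprocity identifies the number of interior lattice points of $mP$ with $(-1)^dL_P(-m)$, so the boundary counting function is
\[
B(m):=\abs{\partial(mP)\cap\Z^d}=L_P(m)-(-1)^dL_P(-m).
\]
Forming this difference annihilates every coefficient $c_i$ with $i\equiv d\pmod 2$ and doubles the rest, leaving
\[
B(m)=2(d-2n)+2\sum_{j=1}^{n}c_{d-2j+1}\,m^{\,d-2j+1},
\]
a polynomial of degree $d-1$ with leading coefficient $2c_{d-1}=\vol{\partial P}$ and with the parity $B(-m)=(-1)^{d-1}B(m)$; the constant term $2(d-2n)$ equals $2c_0$ precisely when $d$ is odd. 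This identity drives the entire argument, and it is also where the convention $\abs{\partial(0P)\cap\Z^d}:=1$ enters, since the polynomial value $B(0)=2c_0$ is $2$, not $1$.

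For \eqref{eq:matrix_boundary_volume} I would evaluate $B$ at $m=1,\dots,n$. Setting $x_j:=2c_{d-2j+1}$ and moving the constant term to the right-hand side turns these $n$ evaluations into the linear system
\[
\abs{\partial(iP)\cap\Z^d}-2(d-2n)=\sum_{j=1}^n x_j\,i^{\,d-2j+1},\qquad i=1,\dots,n,
\]
whose coefficient matrix is exactly $\mathcal{D}$. To see that $\mathcal{D}$ is invertible, factor $i^{\,d-2n+1}$ out of its $i$-th row; what remains has entries $(i^2)^{\,n-j}$, a Vandermonde matrix in the distinct nodes $1^2,\dots,n^2$, hence nonsingular. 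Cramer's rule then solves for the first unknown $x_1=2c_{d-1}=\vol{\partial P}$, and the matrix obtained from $\mathcal{D}$ by replacing its first column with the right-hand side vector is precisely $\mathcal{A}$, giving $\vol{\partial P}=\det{\mathcal{A}}/\det{\mathcal{D}}$.

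The explicit formula \eqref{eq:explicit_boundary_volume} is where the real work lies, and here I would switch to finite differences rather than invert $\mathcal{D}$ by hand. Since $B$ has degree $d-1$ with leading coefficient $\vol{\partial P}$, its top forward difference is the constant $\Delta^{d-1}B=(d-1)!\,\vol{\partial P}$. Because this is constant we may base the difference at any node; choosing the nodes symmetric about the origin --- $\{-n,\dots,n\}$ when $d$ is odd and $\{1-n,\dots,n\}$ when $d$ is even --- and reindexing expresses $(d-1)!\,\vol{\partial P}$ as a signed binomial sum $\sum_m(-1)^{n+m}\binom{d-1}{n\pm m}B(m)$ running over arguments of both signs. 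I would then fold the negative arguments onto the positive ones using $B(-m)=(-1)^{d-1}B(m)$; the two ranges combine to produce the coefficient $\binom{d-1}{n-m}+(-1)^{d-1}\binom{d-1}{n+m}$ of $B(m)$, which is the content of \eqref{eq:explicit_boundary_volume}.

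The one genuinely delicate point --- the step I expect to be the main obstacle --- is the term $m=0$, where the two parity cases and the convention must be reconciled. The folding produces the coefficient $\binom{d-1}{n}+(-1)^{d-1}\binom{d-1}{n}$ of $B(0)$, which is $2\binom{d-1}{n}$ for $d$ odd and $0$ for $d$ even; one checks that multiplying this by the convention value $1$ reproduces exactly the contribution $\binom{d-1}{n}\cdot 2c_0$ demanded by the true polynomial value $B(0)=2c_0$. Keeping the symmetric reindexing, the parity fold, and this $m=0$ bookkeeping aligned across both parities of $d$ is the fiddly part; the invertibility of $\mathcal{D}$ and the passage to \eqref{eq:matrix_boundary_volume} are routine by comparison. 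As a sanity check, specialising to $d=3$ (so $n=1$) collapses \eqref{eq:explicit_boundary_volume} to $\vol{\partial P}=\abs{\partial P\cap\Z^3}-2$, recovering Proposition~\ref{prop:dim_three_boundary_volume}.
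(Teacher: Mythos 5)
Your proposal is correct, and while the first half mirrors the paper, your derivation of the explicit formula~\eqref{eq:explicit_boundary_volume} is genuinely different. Both arguments start from the same place: Ehrhart's coefficient theorem plus Ehrhart--Macdonald reciprocity, which show that $B(m)=L_P(m)-(-1)^dL_P(-m)$ is a polynomial of degree $d-1$ containing only the coefficients $2c_{d-2j+1}$, with leading coefficient $\vol{\partial P}$. For~\eqref{eq:matrix_boundary_volume} you and the paper both apply Cramer's rule to the resulting $n\times n$ system; your normalisation (keeping the powers $i^{d-2j+1}$ and absorbing the constant $2(d-2n)$ into the left-hand side) produces $\mathcal{A}$ and $\mathcal{D}$ on the nose, whereas the paper works with $b_m=\frac{1}{2m}L_{\partial P}(m)$ and needs unstated elementary matrix operations afterwards --- a cosmetic difference. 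The real divergence is in~\eqref{eq:explicit_boundary_volume}: the paper inverts the Vandermonde matrix explicitly via Turner's $U\cdot L$ factorisation, reads off the bottom row of $L$ to get Lagrange-type coefficients $\prod_{k\neq m}1/(m^2-k^2)$, and then massages these into binomials, treating even and odd $d$ in separate computations. You instead observe that $(d-1)!\,\vol{\partial P}$ is the $(d-1)$-st forward difference of $B$, base it at nodes symmetric about the origin, and fold negative arguments onto positive ones using $B(-m)=(-1)^{d-1}B(m)$; the binomial coefficients $\binom{d-1}{n\mp m}$ then appear directly from the difference operator rather than from identities applied after the fact. Your route is more self-contained (no external Vandermonde-inverse citation), treats the two parities more uniformly, and you correctly identify and resolve the only delicate point, the $m=0$ term, where the convention $\abs{\partial(0P)\cap\Z^d}:=1$ against the doubled coefficient compensates for the polynomial value $B(0)=1-(-1)^d$. (One small slip: your phrase ``the true polynomial value $B(0)=2c_0$'' is literally valid only for odd $d$; for even $d$ one has $B(0)=0$, but since the folded coefficient at $m=0$ also vanishes there, both sides contribute zero and your bookkeeping survives.) The paper's approach buys an explicit closed form for the inverse matrix, which is reusable for the other coefficients $c_{d-2j+1}$; yours buys brevity and avoids case-splitting until the final fold.
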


The boundary volume formula for each dimension $4\le d\le 10$ are listed in Table~\ref{tab:boundary_volume}.

\section{A General Boundary Volume Formula}\label{sec:main}
Let $L_P(m):=\abs{mP\cap\Z^d}$ denote the number of lattice points in $P$ dilated by a factor of $m\in\Z_{\geq 0}$. Similarly, let $L_{\partial P}(m):=\abs{\partial(mP)\cap\Z^d}$ denote the number of lattice points on the boundary of $mP$. In two dimensions the relationship between $L_P$ and $L_{\partial P}$ is given by Pick's Theorem~\cite{Pick}. In three dimensions Reeve proved an analogous result:

\begin{thm}[\protect{\cite[Theorem~1]{Ree57}}]\label{thm:reeve_3D}
Let $P$ be a three-dimensional convex lattice polytope. Then
$$2(m-1)m(m+1)\vol{P}=2(L_P(m)-m\abs{P\cap\Z^3})-(L_{\partial P}(m)-m\abs{\partial P \cap\Z^3}),$$
and
$$L_{\partial P}(m)=2(1-m^2) + m^2\abs{\partial P\cap\Z^3}.$$
\end{thm}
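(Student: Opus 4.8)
The plan is to obtain both identities as polynomial consequences of Ehrhart theory applied to the fixed polytope $P$, read off as identities in the dilation factor $m$. By Ehrhart's theorem the counting function $L_P(m)=\abs{mP\cap\Z^3}$ agrees, for $m\in\Z_{\geq 0}$, with a degree-$3$ polynomial $L_P(m)=c_3m^3+c_2m^2+c_1m+c_0$; its constant term is $c_0=L_P(0)=1$ (the Euler characteristic of a convex polytope) and its leading coefficient is the Euclidean volume $c_3=\vol{P}$. First I would express the boundary count via Ehrhart--Macdonald reciprocity. Writing $P^\circ$ for the interior of $P$ and $L_{P^\circ}(m):=\abs{mP^\circ\cap\Z^3}$, the decomposition $L_{\partial P}(m)=L_P(m)-L_{P^\circ}(m)$ together with reciprocity $L_{P^\circ}(m)=(-1)^3L_P(-m)=-L_P(-m)$ cancels the odd-degree terms and leaves
$$L_{\partial P}(m)=L_P(m)+L_P(-m)=2c_2m^2+2.$$
Evaluating at $m=1$ gives $2c_2+2=\abs{\partial P\cap\Z^3}$, so $2c_2=\abs{\partial P\cap\Z^3}-2$; substituting back yields $L_{\partial P}(m)=2(1-m^2)+m^2\abs{\partial P\cap\Z^3}$, which is the second displayed formula.

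For the first identity I would substitute the polynomial forms into the right-hand side and factor. Using $\abs{P\cap\Z^3}=L_P(1)=c_3+c_2+c_1+1$ and $\abs{\partial P\cap\Z^3}=L_{\partial P}(1)=2c_2+2$, a direct computation separates the two contributions as
$$L_P(m)-m\abs{P\cap\Z^3}=(m-1)\bigl(c_3m(m+1)+c_2m-1\bigr),$$
$$L_{\partial P}(m)-m\abs{\partial P\cap\Z^3}=2(m-1)(c_2m-1).$$
Forming twice the first minus the second, the factor $(m-1)$ is common and the $c_2$- and constant terms cancel exactly, leaving $2(m-1)m(m+1)c_3$. Since $c_3=\vol{P}$, this is precisely the first asserted identity.

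The only genuinely nontrivial input is the reciprocity step: knowing a priori that $L_P$ and $L_{P^\circ}$ are polynomials related by $m\mapsto -m$ is what forces the odd coefficients of $L_{\partial P}$ to vanish and pins down its constant term as the Euler characteristic. Everything after that is the bookkeeping of evaluating at $m=1$ and collecting terms. The main point to watch is the normalisation convention, namely that here $\vol{P}$ denotes the ordinary (Euclidean) volume $c_3$ rather than a lattice-normalised volume, and that $c_0=1$. Because $P$ is three-dimensional no higher Ehrhart coefficients intervene, so the cancellation is clean and the argument never needs to identify $c_2$ with $\tfrac12\vol{\partial P}$ beyond the single evaluation above.
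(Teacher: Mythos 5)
Your proof is correct, but there is nothing in the paper to compare it against: the paper gives no proof of this statement, quoting it directly from Reeve's 1957 paper, whose original argument is elementary (it predates Ehrhart theory) and proceeds by direct lattice-point counting. Your derivation instead runs entirely through the modern machinery that the paper sets up immediately afterwards: polynomiality of $L_P$ with $c_0=1$ and $c_3=\vol{P}$ (Theorem~\ref{thm:Ehrhart_known_coefficients}) together with Ehrhart--Macdonald reciprocity (Theorem~\ref{thm:Ehrhart-Macdonald_reciprocity}). Indeed, your key intermediate identity $L_{\partial P}(m)=2c_2m^2+2$ is exactly the $d=3$ instance of the paper's Corollary~\ref{cor:system_of_equations}, which the paper obtains from reciprocity by the same one-line cancellation of odd-degree terms; evaluating at $m=1$ then yields the second displayed formula, and your factorization of $L_P(m)-m\abs{P\cap\Z^3}$ and $L_{\partial P}(m)-m\abs{\partial P\cap\Z^3}$ (both of which I checked; they are right, and the $c_2$ terms do cancel to leave $2(m-1)m(m+1)c_3$) yields the first. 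What your route buys is a short, self-contained proof living inside the paper's own framework, making transparent why only $c_3$ survives. What it costs is that it rests on Macdonald's reciprocity theorem (1971), a substantially deeper input than anything in Reeve's original argument, so it inverts the historical order of discovery --- which is harmless here, since the paper nowhere uses Reeve's theorem to establish reciprocity or Ehrhart's coefficient identities, so no circularity is introduced.
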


In general the function $L_P$ is a polynomial of degree $d$, and is called the \emph{Ehrhart polynomial}. Ehrhart showed that certain coefficients of $L_P$ have natural interpretations in terms of $P$.

\begin{thm}[\cite{Ehr67}]\label{thm:Ehrhart_known_coefficients}
Let $P$ be a $d$-dimensional convex lattice polytope with Ehrhart polynomial $L_P(m)=c_dm^d+\ldots+c_1m+c_0$. Then:
\begin{itemize}
\item[(i)] $c_d=\vol{P}$;
\item[(ii)] $c_{d-1}=(1/2)\vol{\partial P}$;
\item[(iii)] $c_0=1$.
\end{itemize}
\end{thm}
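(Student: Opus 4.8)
The plan is to treat the three assertions in increasing order of difficulty, writing $L_P(m)=c_dm^d+\cdots+c_1m+c_0$ throughout and recalling from Section~\ref{sec:main} that $L_P(m)=\abs{mP\cap\Z^d}$ and $L_{\partial P}(m)=\abs{\partial(mP)\cap\Z^d}$. Assertions (i) and (iii) are elementary and I would dispatch them first. For (iii) I would evaluate at $m=0$: since $0\cdot P=\{\mathbf 0\}$ and $\mathbf 0\in\Z^d$ we have $L_P(0)=1$, and as $L_P$ is a polynomial this value is exactly $c_0$. For (i) I would use the standard volume comparison: associating to each lattice point the half-open unit cube it anchors, the cubes meeting $mP$ fill a region differing from $mP$ only within a bounded neighbourhood of $\partial(mP)$, of volume $O(m^{d-1})$. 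Hence $L_P(m)=\vol{mP}+O(m^{d-1})=m^d\vol{P}+O(m^{d-1})$, so $c_d=\lim_{m\to\infty}L_P(m)/m^d=\vol{P}$.

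The heart of the proof is (ii), which I would establish in two steps. First I would identify the leading term of the boundary counting function. The boundary $\partial(mP)$ is the union of the dilated facets $mF$, and two distinct facets meet only along faces of dimension at most $d-2$. Each facet $F$, taken inside its affine hull with the induced lattice $\mathrm{aff}\,F\cap\Z^d$, is a $(d-1)$-dimensional lattice polytope whose normalised volume is precisely $\mathrm{vol}_{d-1}(F)/\det{\mathrm{aff}\,F\cap\Z^d}$; applying part~(i) in dimension $d-1$ to each such $F$ shows its Ehrhart polynomial has this normalised volume as leading coefficient. Since the pairwise overlaps, and all lower faces, contribute only $O(m^{d-2})$ lattice points, inclusion--exclusion gives
$$L_{\partial P}(m)=\left(\sum_{F\text{ facet of }P}\frac{\mathrm{vol}_{d-1}(F)}{\det{\mathrm{aff}\,F\cap\Z^d}}\right)m^{d-1}+O(m^{d-2})=\vol{\partial P}\,m^{d-1}+O(m^{d-2}),$$
so that the degree-$(d-1)$ coefficient of $L_{\partial P}$ equals $\vol{\partial P}$.

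Second, I would relate this leading coefficient to $c_{d-1}$. Writing $L_{P^\circ}(m)$ for the number of lattice points in the relative interior of $mP$, so that $L_{\partial P}(m)=L_P(m)-L_{P^\circ}(m)$, I would invoke the Ehrhart--Macdonald reciprocity law $L_{P^\circ}(m)=(-1)^dL_P(-m)$. Comparing the coefficient of $m^{d-1}$ on both sides of $L_{\partial P}(m)=L_P(m)-(-1)^dL_P(-m)$ produces $2c_{d-1}$ on the right, while by the first step the left-hand coefficient is $\vol{\partial P}$; hence $c_{d-1}=\tfrac12\vol{\partial P}$, as claimed.

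The main obstacle is exactly this second step: the asymptotics of the first step only pin down the leading behaviour of $L_{\partial P}$, and converting that into the \emph{exact} identity $2c_{d-1}=\vol{\partial P}$ requires the interior--boundary symmetry encoded in reciprocity rather than any limiting argument. (Indeed, the more naive attempt of expanding $L_P(m)=\sum_{F\preceq P}L_{F^\circ}(m)$ over all faces reduces (ii) to determining the degree-$(d-1)$ coefficient of $L_{P^\circ}$, which is itself equivalent to reciprocity.) Reciprocity is an external input not contained in the preceding excerpt, but since it is standardly established via the Ehrhart series and Stanley's non-negativity theorem, independently of the coefficient interpretations proved here, using it introduces no circularity. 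A secondary point requiring care in the first step is the justification that each facet's relative-interior and closed counts share the same leading coefficient, and that the inclusion--exclusion error is genuinely of lower order; both follow once part~(i) is known in every dimension.
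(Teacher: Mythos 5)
Your proof is sound, but there is nothing in the paper to compare it against: Theorem~\ref{thm:Ehrhart_known_coefficients} is stated as a known result cited from \cite{Ehr67}, with no proof given, so you have supplied what the authors take as input. Your route is the standard one, and each step checks out. The half-open cube argument correctly gives $L_P(m)=m^d\vol{P}+O(m^{d-1})$ for (i). For (ii), translating each facet $F$ by one of its lattice vertices identifies $\abs{mF\cap\Z^d}$ with the Ehrhart counting function of a $(d-1)$-dimensional lattice polytope with respect to the rank-$(d-1)$ lattice $\mathrm{aff}\,F\cap\Z^d$, so part (i) applied in dimension $d-1$ yields the normalised facet volume as leading coefficient, and pairwise intersections of facets, being at most $(d-2)$-dimensional, indeed contribute only $O(m^{d-2})$; then $L_{\partial P}(m)=L_P(m)-(-1)^dL_P(-m)$ has $m^{d-1}$-coefficient $2c_{d-1}$ (the $m^d$ terms cancel), and matching against your asymptotics gives $2c_{d-1}=\vol{\partial P}$. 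Your use of reciprocity is non-circular, exactly as you argue, and fits the paper's own architecture: Theorem~\ref{thm:Ehrhart-Macdonald_reciprocity} is stated independently of the coefficient interpretations, and your final coefficient comparison is precisely the top ($i=1$) term of the linear system in Corollary~\ref{cor:system_of_equations}. The one point deserving a flag is (iii): writing $c_0=L_P(0)=1$ presupposes that the polynomial agrees with the counting function at $m=0$ (with the convention $0P=\{\mathbf{0}\}$); if polynomiality is established only for $m\ge 1$, then $c_0=1$ is itself a theorem, standardly extracted from the Ehrhart series where it reads $\delta_0=1$. Under the convention of Section~\ref{sec:main}, where $L_P$ is defined and asserted to be polynomial on all of $\Z_{\ge 0}$, your one-line evaluation is legitimate, but a fully self-contained proof should make that dependence explicit.
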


The values of the remaining coefficients of $L_P$ have been studied in, for example,~\cite{Pomm93, Diaz96, BLDPS05}. Particular attention has been paid to the connection with toric geometry; under some additional assumptions, the function $L_P(m)$ calculates $h^0(-mK)$.

Let $P^\circ$ denote the strict interior of $P$. Ehrhart conjectured, and Macdonald proved, a remarkable reciprocity formula connecting $L_P(m)$ and $L_{P^\circ}(m)$ (see~\cite{Dan78} for a proof exploiting Serre--Grothendieck duality).

\begin{thm}[\cite{Mac71}]\label{thm:Ehrhart-Macdonald_reciprocity}
Let $P$ be a $d$-dimensional convex lattice polytope. Then
$$L_P(-m)=(-1)^dL_{P^\circ}(m).$$
\end{thm}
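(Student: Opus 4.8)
The plan is to prove the reciprocity via the lattice-point generating function of the cone over $P$, following Stanley's approach. First I would lift $P$ to height one in $\R^{d+1}$, writing $\hat{P}=P\times\{1\}$, and form the cone $C=\mathrm{cone}(\hat{P})$ spanned by the lifted polytope. The lattice points of $C$ at height $m$ are exactly the lattice points of $mP$, so the Ehrhart series $\sum_{m\ge 0}L_P(m)t^m$ is recovered as the generating function $\sigma_C(t):=\sum_{x\in C\cap\Z^{d+1}}t^{x_{d+1}}$ that tracks the final coordinate, and the interior series $\sum_{m\ge 1}L_{P^\circ}(m)t^m$ arises identically from the relative interior $C^\circ$. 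Since each lifted vertex $(v,1)$ has a final coordinate equal to one, every ray generator is already primitive, which will keep all heights equal to one in what follows.

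Next I would reduce to the simplicial case. A lattice triangulation of $P$ induces a triangulation of $C$ into simplicial cones, and I would use a \emph{half-open} decomposition so that the cones (and their interiors) partition $C$ (respectively $C^\circ$) without overlap, sidestepping inclusion--exclusion over shared faces. For a single simplicial cone with generators $\hat{v}_0,\ldots,\hat{v}_d$, every lattice point has a unique expression $x=p+\sum_i n_i\hat{v}_i$ with $n_i\in\N$ and $p$ in the half-open fundamental parallelepiped $\Pi=\{\sum_i\lambda_i\hat{v}_i:0\le\lambda_i<1\}$. This factorisation immediately gives $\sigma_C(t)=\frac{\sum_{p\in\Pi\cap\Z^{d+1}}t^{p_{d+1}}}{(1-t)^{d+1}}$, exhibiting $\sigma_C$ as a rational function with denominator $(1-t)^{d+1}$.

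The heart of the argument is a reflection bijection. The map $x\mapsto(\hat{v}_0+\cdots+\hat{v}_d)-x$ sends $\Pi\cap\Z^{d+1}$ bijectively onto the opposite half-open parallelepiped $\Pi^\circ=\{\sum_i\lambda_i\hat{v}_i:0<\lambda_i\le 1\}$ that generates the interior cone, while negating heights up to the total shift $x_{d+1}\mapsto(d+1)-x_{d+1}$. Reading this off at the level of generating functions, together with $(1-1/t)^{d+1}=(-1)^{d+1}(1-t)^{d+1}/t^{d+1}$, yields the functional equation $\sigma_{C^\circ}(t)=(-1)^{d+1}\sigma_C(1/t)$ as rational functions. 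Recombining the simplicial pieces over the half-open triangulation, these local equations assemble into the global identity $\mathrm{Ehr}_{P^\circ}(t)=(-1)^{d+1}\mathrm{Ehr}_P(1/t)$.

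Finally I would translate this rational-function identity into the stated polynomial reciprocity. Since $L_P$ is a polynomial of degree $d$, one has the standard formal fact that $\sum_{m\ge 0}L_P(m)t^m$, evaluated as a rational function at $1/t$, equals $-\sum_{m\ge 1}L_P(-m)t^m$; substituting this into the global functional equation and comparing the coefficient of $t^m$ against $\mathrm{Ehr}_{P^\circ}(t)=\sum_{m\ge 1}L_{P^\circ}(m)t^m$ gives $L_P(-m)=(-1)^dL_{P^\circ}(m)$. I expect the main obstacle to be the bookkeeping in the recombination step: one must check that the half-open simplicial decomposition of $C^\circ$ is genuinely the reflection of the decomposition of $C$, so that the per-cone functional equations sum correctly and no lattice point on a shared interior face is double-counted or lost.
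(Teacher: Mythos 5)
Your argument is correct, and it is genuinely different from what the paper does: the paper offers no proof of this statement at all, importing it from Macdonald~\cite{Mac71} and pointing to Danilov~\cite{Dan78} for a proof via Serre--Grothendieck duality on toric varieties. What you give instead is the self-contained combinatorial proof in the style of Stanley (essentially the route of Beck--Robins~\cite{BR}, which the paper cites elsewhere): coning over $P$, tiling a simplicial cone by translates of its half-open fundamental parallelepiped, the reflection $x\mapsto(\hat{v}_0+\cdots+\hat{v}_d)-x$ giving $\sigma_{C^\circ}(t)=(-1)^{d+1}\sigma_C(1/t)$, globalisation via a half-open decomposition, and the standard substitution lemma for generating functions of polynomials. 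The recombination worry you flag is real but standard: the complementary half-open cones (open and closed facets swapped, relative to a generic interior viewpoint) partition $C^\circ$ exactly when the original ones partition $C$; alternatively, the irrational decompositions of Beck--Sottile~\cite{BS07} sidestep the issue entirely. What each route buys: Danilov's proof is short given the identification $L_P(m)=h^0(-mK)$ and toric Serre duality, while yours is elementary and yields Ehrhart's polynomiality of $L_P$ along the way --- which you should note, since your final step uses it: the numerator over a half-open parallelepiped has degree at most $d$ against denominator $(1-t)^{d+1}$. One small imprecision: primitivity of the generators $(v,1)$ does not make ``all heights equal to one'' --- parallelepiped lattice points have heights in $\{0,\ldots,d\}$; what it actually gives you is precisely the denominator $(1-t)^{d+1}$.
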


Since $L_P(m)=L_{\partial P}(m)+L_{P^\circ}(m)$ we have the following immediate corollary:

\begin{cor}\label{cor:system_of_equations}
Let $P$ be a $d$-dimensional convex lattice polytope. The coefficients $c_{d-1}$, $c_{d-3}$, $c_{d-5}$, $\ldots$ of $L_P$ satisfy the system of equations:
$$\frac{1}{2}L_{\partial P}(m)=\sum_{i=1}^{\roof{d/2}}m^{d-2i+1}c_{d-2i+1},\quad\text{ for all }m\in\Z_{>0}.$$
\end{cor}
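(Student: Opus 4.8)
The plan is to read the claim off directly from Ehrhart--Macdonald reciprocity (Theorem~\ref{thm:Ehrhart-Macdonald_reciprocity}) combined with the decomposition $L_P(m)=L_{\partial P}(m)+L_{P^\circ}(m)$ recorded immediately above the statement. First I would rearrange this decomposition as $L_{\partial P}(m)=L_P(m)-L_{P^\circ}(m)$, and then eliminate the interior count using reciprocity in the form $L_{P^\circ}(m)=(-1)^d L_P(-m)$. This expresses $L_{\partial P}(m)$ entirely through the single Ehrhart polynomial $L_P$, evaluated at $m$ and at $-m$, namely $L_{\partial P}(m)=L_P(m)-(-1)^d L_P(-m)$.

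Next I would substitute the expansion $L_P(m)=\sum_{k=0}^{d}c_k m^k$. Since $L_P(-m)=\sum_{k=0}^{d}(-1)^k c_k m^k$, the combination collapses to
\[
L_{\partial P}(m)=\sum_{k=0}^{d}c_k\bigl(1-(-1)^{d+k}\bigr)m^k.
\]
The scalar $1-(-1)^{d+k}$ vanishes whenever $k\equiv d\pmod 2$ and equals $2$ whenever $k\not\equiv d\pmod 2$. Hence the leading coefficient $c_d$ together with $c_{d-2},c_{d-4},\dots$ all drop out, while exactly the coefficients $c_{d-1},c_{d-3},c_{d-5},\dots$ survive, each carrying a factor of $2$.

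The final step is purely a matter of reindexing: setting $k=d-2i+1$ runs through precisely the surviving indices $d-1,d-3,\dots$ as $i=1,2,\dots$, terminating at $k=1$ when $d$ is even and at $k=0$ when $d$ is odd, and in both cases this is captured by letting $i$ range up to $\roof{d/2}$. Dividing by $2$ then yields $\frac{1}{2}L_{\partial P}(m)=\sum_{i=1}^{\roof{d/2}}m^{d-2i+1}c_{d-2i+1}$ for every $m\in\Z_{>0}$, as required. I do not expect a genuine obstacle here; the only point demanding care is verifying that the upper limit $\roof{d/2}$ correctly accommodates both parities of $d$ — in particular that the constant term $c_0=1$ is retained precisely as the $i=\roof{d/2}$ term when $d$ is odd, and is correctly excluded (the sum stopping at $c_1$) when $d$ is even.
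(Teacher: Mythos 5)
Your proof is correct and follows exactly the route the paper intends: the paper presents this as an ``immediate corollary'' of Ehrhart--Macdonald reciprocity (Theorem~\ref{thm:Ehrhart-Macdonald_reciprocity}) together with the decomposition $L_P(m)=L_{\partial P}(m)+L_{P^\circ}(m)$, which is precisely your substitution $L_{\partial P}(m)=L_P(m)-(-1)^dL_P(-m)$ followed by the parity cancellation of coefficients. Your explicit check that the upper limit $\roof{d/2}$ handles both parities of $d$ (retaining $c_0$ when $d$ is odd, stopping at $c_1$ when $d$ is even) fills in the only detail the paper leaves unstated.
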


A formula for the volume of an even-dimensional convex lattice polytope was derived by Macdonald in~\cite{Mac63}:
\begin{align*}
\vol{P}=\frac{1}{d!}\Bigg(\sum_{m=1}^{d/2}(-1)^{d/2-m}{d\choose d/2-m}\Big(2&\abs{(mP)^\circ\cap\Z^d}+\\
&\abs{\partial(mP)\cap\Z^d}\Big)+(-1)^{d/2}{d\choose d/2}\Bigg).
\end{align*}
Ko{\l}odziejczyk was able to compute the odd-dimensional formula in~\cite{Kol00}:
$$\vol{P}=\frac{1}{(d+1)!}\sum_{m=1}^{(d+1)/2}(-1)^{(d+1)/2-m}{d+1\choose (d+1)/2-m}m\left(2\abs{(mP)^\circ\cap\Z^d}+\abs{\partial(mP)\cap\Z^d}\right).$$
It is worth noticing that, with a little rearranging, one can combine these results to give a general form remarkably similar to equation~\eqref{eq:explicit_boundary_volume}.

\begin{thm}\label{thm:general_volume}
Let $P$ be a $d$-dimensional lattice polytope. Then
$$\vol{P}=\frac{1}{d!}\sum_{m=0}^N(-1)^{N+m}\left({d\choose N-m}+(-1)^d{d\choose N+m}\right)\left(\abs{mP\cap\Z^d}-\frac{1}{2}\abs{\partial(mP)\cap\Z^d}\right),$$
where $N:=\roof{d/2}$ and $\abs{\partial(0P)\cap\Z^d}:=1$.
\end{thm}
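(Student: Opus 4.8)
The plan is to observe that the bracketed quantity in the statement is, up to a factor of two, exactly the combined interior-and-boundary count evaluated by Macdonald and by Ko{\l}odziejczyk in the two displayed formulae above, and then to reconcile those two parity-dependent formulae with the single expression by elementary binomial manipulations. Abbreviating $L_P(m)=\abs{mP\cap\Z^d}$, $L_{\partial P}(m)=\abs{\partial(mP)\cap\Z^d}$ and $L_{P^\circ}(m)=\abs{(mP)^\circ\cap\Z^d}$, the decomposition $L_P=L_{P^\circ}+L_{\partial P}$ yields $2L_{P^\circ}(m)+L_{\partial P}(m)=2\bigl(L_P(m)-\tfrac12 L_{\partial P}(m)\bigr)$. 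Hence $\abs{mP\cap\Z^d}-\tfrac12\abs{\partial(mP)\cap\Z^d}$ is precisely half the quantity $2\abs{(mP)^\circ\cap\Z^d}+\abs{\partial(mP)\cap\Z^d}$ appearing in both source formulae; at $m=0$ it equals $1-\tfrac12=\tfrac12$, in accordance with the convention $\abs{\partial(0P)\cap\Z^d}:=1$.

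For the even case write $d=2N$, so that $N=\roof{d/2}$ and $(-1)^d=+1$. Here I would invoke Macdonald's formula. Because $(-1)^{N-m}=(-1)^{N+m}$, and because the binomial symmetry ${d\choose N+m}={d\choose N-m}$ holds precisely when $d=2N$, the bracket collapses to ${d\choose N-m}+{d\choose N+m}=2{d\choose N-m}$. Substituting the identification from the previous paragraph then matches each $m\ge1$ summand of the statement term-by-term with Macdonald's, while the $m=0$ summand contributes $(-1)^{N}\cdot2{d\choose N}\cdot\tfrac12=(-1)^{d/2}{d\choose d/2}$, reproducing exactly Macdonald's additive constant.

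For the odd case write $d=2N-1$, so that again $N=\roof{d/2}$ but now $(-1)^d=-1$ and the bracket is ${d\choose N-m}-{d\choose N+m}$. At $m=0$ this vanishes, consistent with Ko{\l}odziejczyk's sum beginning at $m=1$. After accounting for the differing prefactors $1/d!$ and $1/(d+1)!$, matching the statement against Ko{\l}odziejczyk's formula reduces to the single identity ${d\choose N-m}-{d\choose N+m}=\frac{2m}{d+1}{d+1\choose N-m}$, valid since $d+1=2N$. This I would confirm by bringing the two coefficients on the left to the common denominator $(N-m)!\,(N+m)!$: their numerators become $(N+m)(2N-1)!$ and $(N-m)(2N-1)!$, whose difference is $2m\,(2N-1)!$, exactly $(2N-1)!$ times the right-hand side.

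The argument is elementary throughout, and I anticipate that the only delicate point is the bookkeeping: correctly tracking the two parities through the sign $(-1)^d$, treating the degenerate $m=0$ endpoint under the stated convention, and absorbing the mismatch between the $1/d!$ and $1/(d+1)!$ normalisations through the factor $2m/(d+1)$ in the odd-case identity. Verifying that binomial identity, together with confirming the endpoint behaviour in each parity, is the main---if modest---obstacle.
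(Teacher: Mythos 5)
Your proposal is correct and takes essentially the same approach as the paper: the paper states this theorem as the result of combining Macdonald's even-dimensional formula with Ko{\l}odziejczyk's odd-dimensional one ``with a little rearranging,'' and your parity-split argument, the identification $\abs{mP\cap\Z^d}-\tfrac12\abs{\partial(mP)\cap\Z^d}=\tfrac12\bigl(2\abs{(mP)^\circ\cap\Z^d}+\abs{\partial(mP)\cap\Z^d}\bigr)$, and the identity ${d\choose N-m}-{d\choose N+m}=\frac{2m}{d+1}{d+1\choose N-m}$ for $d=2N-1$ constitute exactly that rearrangement. All the checks (the $m=0$ endpoint in both parities and the $1/d!$ versus $1/(d+1)!$ normalisation) are handled correctly.
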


\begin{table}[t]
\begin{center}
\begin{tabular}{|r|l|}
\hline
$d$&$(d-1)!\,\vol{\partial P}$\\
\hline
$4$&$L_{\partial P}(2)-2L_{\partial P}(1)$\\
$5$&$2\left(L_{\partial P}(2)-4L_{\partial P}(1)+6\right)$\\
$6$&$L_{\partial P}(3)-4L_{\partial P}(2)+5L_{\partial P}(1)$\\
$7$&$2\left(L_{\partial P}(3)-6L_{\partial P}(2)+15L_{\partial P}(1)-20\right)$\\
$8$&$L_{\partial P}(4)-6L_{\partial P}(3)+14L_{\partial P}(2)-14L_{\partial P}(1)$\\
$9$&$2\left(L_{\partial P}(4)-8L_{\partial P}(3)+28L_{\partial P}(2)-56L_{\partial P}(1)+70\right)$\\
$10$&$L_{\partial P}(5)-8L_{\partial P}(4)+27L_{\partial P}(3)-48L_{\partial P}(2)+42L_{\partial P}(1)$\\
\hline
\end{tabular}
\vspace{1em}
\end{center}
\caption{The relationship between the boundary volume and the number of boundary points, for each dimension $4\le d\le10$ (see Theorem~\ref{thm:general_boundary_volume}).}
\label{tab:boundary_volume}
\end{table}

\begin{proof}[Proof of Theorem~\ref{thm:general_boundary_volume}]
We wish to express the value of the penultimate coefficient $c_{d-1}$ of $L_P$ in terms of $L_{\partial P}$. A formula for $\vol{\partial P}$ follows from Theorem~\ref{thm:Ehrhart_known_coefficients}~(ii). We shall handle the even dimensional and odd dimensional cases separately. For brevity let us define
$$b_m:=\frac{1}{2m}L_{\partial P}(m),\quad\text{ for all }m\in\Z_{>0}.$$

When $d=2n$ is even, Corollary~\ref{cor:system_of_equations} tells us that the coefficients satisfy the linear system
\begin{equation}\label{eq:system_even}
\begin{pmatrix}
1&1&\ldots&1\\
1&2^2&\ldots&2^{d-2}\\
\vdots&\vdots&&\vdots\\
1&n^2&\ldots&n^{d-2}
\end{pmatrix}\cdot\begin{pmatrix}c_1\\c_3\\\vdots\\c_{d-1}\end{pmatrix}=\begin{pmatrix}b_1\\b_2\\\vdots\\b_n\end{pmatrix}.
\end{equation}
Equation~\eqref{eq:matrix_boundary_volume} follows from an application of Cramer's rule and some elementary matrix operations.

To obtain the explicit description~\eqref{eq:explicit_boundary_volume}, consider the square matrix on the left hand side of~\eqref{eq:system_even}. This is a Vandermonde matrix; we can express its inverse in terms of the product $U\cdot L$ (\cite[equations~(5) and~(7)]{Tur66}), where $U$ is an upper triangular matrix with $1$s on the diagonal, and $L$ is a lower triangular matrix given by
$$L_{ij}=\left\{\begin{array}{ll}
0,&\text{if }i<j,\vspace{0.4em}\\
1,&\text{if }i=j=1,\vspace{0.4em}\\
\displaystyle
\mathop{\prod_{k=1}^i}_{k\ne j}\frac{1}{j^2-k^2},&\text{otherwise.}
\end{array}\right.$$
More explicitly,
$$\begin{pmatrix}c_1\\c_3\\\vdots\\c_{d-1}\end{pmatrix}=
\begin{pmatrix}
\begin{matrix}1&\\&1\end{matrix}&\text{\huge$\star$}\\
\text{\huge$0$}&\begin{matrix}\ddots&\\&1\end{matrix}
\end{pmatrix}\cdot
\begin{pmatrix}
\begin{matrix}1&\\-\frac{1}{3}&\frac{1}{3}\end{matrix}&\text{\huge$0$}\\
\begin{matrix}\vdots&\vdots\\L_{n1}&L_{n2}\end{matrix}&\begin{matrix}\ddots&\\\hdots&L_{nn}\end{matrix}
\end{pmatrix}\cdot
\begin{pmatrix}b_1\\b_2\\\vdots\\b_n\end{pmatrix}.$$
Since we need only know the bottom row of $L$ in order to determine the coefficient $c_{d-1}$, we obtain
\begin{align*}
c_{d-1}&=\sum_{m=1}^n\Big(\mathop{\prod_{k=1}^n}_{k\ne m}\frac{1}{m^2-k^2}\Big)b_m\\
&=2\sum_{m=1}^n{\frac{(-1)^{n+m}m^2}{(n+m)!(n-m)!}b_m}\\
&=\frac{1}{(2n)!}\sum_{m=1}^n(-1)^{n+m}{2n\choose n+m}mL_{\partial P}(m).
\end{align*}
Observing that
$$\frac{m}{n}{2n\choose n+m}={2n-1\choose n-m}-{2n-1\choose n+m}$$
we obtain the result in the even-dimensional case:
$$c_{d-1}=\frac{1}{2\cdot(2n-1)!}\sum_{m=0}^n(-1)^{n+m}\left({2n-1\choose n-m}-{2n-1\choose n+m}\right)L_{\partial P}(m).$$

When $d=2n+1$ is odd we obtain the linear system
$$\begin{pmatrix}
1&1&\ldots&1\\
1&2^2&\ldots&2^{d-3}\\
\vdots&\vdots&&\vdots\\
1&n^2&\ldots&n^{d-3}
\end{pmatrix}\cdot\begin{pmatrix}c_2\\c_4\\\vdots\\c_{d-1}\end{pmatrix}=\begin{pmatrix}b_1-1\\b_2/2-1/2^2\\\vdots\\b_n/n-1/n^2\end{pmatrix}.$$
Once again, Cramer's rule yields~\eqref{eq:matrix_boundary_volume}.

Solving as in the even case, we have that
$$c_{d-1}=\frac{1}{(2n)!}\sum_{m=1}^n(-1)^{n+m}{2n\choose n+m}\left(L_{\partial P}(m)-2\right).$$
From the identity
$$\sum_{m=0}^{2n}(-1)^m{2n\choose m}=0$$
we deduce that
$$2\sum_{m=1}^n(-1)^{n+m}{2n\choose n+m}=(-1)^{n+1}{2n\choose n}.$$
Hence:
\begin{align*}
c_{d-1}&=\frac{1}{(2n)!}\left((-1)^n{2n \choose n}+\sum_{m=1}^n(-1)^{n+m}{2n\choose n+m}L_{\partial P}(m)\right)\\
&=\frac{1}{2\cdot(2n)!}\left(\sum_{m=0}^n(-1)^{n+m}\left({2n\choose n-m}+{2n\choose n+m}\right)L_{\partial P}(m)\right).
\end{align*}
This gives us~\eqref{eq:explicit_boundary_volume}.
\end{proof}
\section{Applications to Reflexive Polytopes}\label{sec:applications_to_reflexive}
In~\cite{Stan80} Stanley proved that the generating function for $L_P$ can be written as a rational function
$$\mathrm{Ehr}_P(t):=\sum_{m\geq 0}L_P(m)t^m=\frac{\delta_0+\delta_1t+\ldots+\delta_dt^d}{(1-t)^{d+1}},$$
where the coefficients $\delta_i$ are non-negative. The sequence $\left(\delta_0,\delta_1,\ldots,\delta_d\right)$ is known as the \emph{$\delta$-vector} of $P$. For an elementary proof of this and other relevant results, see~\cite{BS07} and~\cite{BR}.

The following corollary is a consequence of Theorem~\ref{thm:Ehrhart_known_coefficients}. 

\begin{cor}\label{cor:known_delta_coefficients}
Let $P$ be a $d$-dimension convex lattice polytope with $\delta$-vector $\left(\delta_0,\delta_1,\ldots,\delta_d\right)$. Then:
\begin{itemize}
\item[(i)] $\delta_0=1$;
\item[(ii)] $\delta_1=\abs{P\cap\Z^d}-d-1$;
\item[(iii)] $\delta_d=\abs{P^\circ\cap\Z^d}$;
\item[(iv)] $\delta_0+\ldots+\delta_d=d!\,\vol{P}$.
\end{itemize} 
\end{cor}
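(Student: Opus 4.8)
The plan is to extract each of the four facts from the rational generating function $\mathrm{Ehr}_P(t)=(\delta_0+\delta_1t+\ldots+\delta_dt^d)/(1-t)^{d+1}$ by comparing it with what Theorem~\ref{thm:Ehrhart_known_coefficients} already tells us about the Ehrhart polynomial. The central computational device is the expansion $1/(1-t)^{d+1}=\sum_{m\ge0}\binom{d+m}{d}t^m$, so that equating coefficients of $t^m$ on both sides yields the linear relation $L_P(m)=\sum_{i=0}^{d}\delta_i\binom{d+m-i}{d}$, valid for all $m\ge0$. All four parts follow by feeding small values of $m$ into this relation, or by taking a suitable limit.

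First I would establish (i) directly: setting $m=0$ in the relation gives $L_P(0)=\delta_0\binom{d}{d}=\delta_0$, and since $L_P(0)=c_0=1$ by Theorem~\ref{thm:Ehrhart_known_coefficients}(iii), we get $\delta_0=1$. For (ii), setting $m=1$ gives $L_P(1)=\delta_0\binom{d+1}{d}+\delta_1\binom{d}{d}=(d+1)+\delta_1$, using part~(i); since $L_P(1)=\abs{P\cap\Z^d}$ by definition of the Ehrhart polynomial, rearranging yields $\delta_1=\abs{P\cap\Z^d}-d-1$. For (iv), I would multiply both sides of the generating function identity by $(1-t)^{d+1}$ and evaluate the numerator $\sum_i\delta_it^i$ at $t=1$: this gives $\delta_0+\ldots+\delta_d=\lim_{t\to1}(1-t)^{d+1}\mathrm{Ehr}_P(t)$. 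Writing $L_P(m)=\vol{P}\,m^d+O(m^{d-1})$ from Theorem~\ref{thm:Ehrhart_known_coefficients}(i), a standard Abelian/asymptotic computation shows that $\lim_{t\to1}(1-t)^{d+1}\sum_m L_P(m)t^m=d!\,c_d=d!\,\vol{P}$. Alternatively, and perhaps more cleanly for the write-up, one can compare the leading coefficient of $L_P(m)$ as a polynomial in $m$ directly: from $L_P(m)=\sum_i\delta_i\binom{d+m-i}{d}$ the coefficient of $m^d$ is $\bigl(\sum_i\delta_i\bigr)/d!$, which must equal $c_d=\vol{P}$.

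The one part requiring genuine care is (iii), the identification $\delta_d=\abs{P^\circ\cap\Z^d}$, since it is the only statement that invokes the interior rather than $P$ itself, and it is where Ehrhart--Macdonald reciprocity (Theorem~\ref{thm:Ehrhart-Macdonald_reciprocity}) must enter. The approach is to note that reciprocity translates into a functional equation for the generating function, namely $\sum_{m\ge1}L_{P^\circ}(m)t^m=(-1)^d\,\mathrm{Ehr}_P(1/t)$ in the appropriate formal/rational sense; substituting the rational form of $\mathrm{Ehr}_P$ shows that the $\delta$-vector of $P$ read in reverse governs the interior count. Concretely, reversing the numerator polynomial corresponds to sending $t\mapsto1/t$, so the coefficient $\delta_d$ becomes the lowest-order term controlling $L_{P^\circ}$, and evaluating $L_{P^\circ}(1)=\abs{P^\circ\cap\Z^d}$ pins it down as $\delta_d=\abs{P^\circ\cap\Z^d}$. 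I expect the bookkeeping of the $t\mapsto1/t$ substitution and the accompanying sign $(-1)^d$ to be the main obstacle: one must be careful to justify the manipulation at the level of rational functions and to match the degree-shifts correctly so that the reversed numerator's constant term is exactly $\delta_d$. Once the reciprocity functional equation is set up cleanly, the extraction of $\delta_d$ is immediate.
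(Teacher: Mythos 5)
Your strategy is the standard one, and it is essentially the only proof available: the paper itself gives no argument for this corollary (it is stated as ``a consequence of Theorem~\ref{thm:Ehrhart_known_coefficients}'', with elementary proofs delegated to the cited references \cite{BS07} and \cite{BR}), and your expansion $L_P(m)=\sum_{i=0}^d\delta_i\binom{d+m-i}{d}$, evaluated at $m=0$ and $m=1$, combined with a leading-coefficient comparison and Ehrhart--Macdonald reciprocity, is precisely how those references proceed. Parts (i), (ii) and (iv) are correct as written; for (iv) your second argument (the coefficient of $m^d$ in $\binom{d+m-i}{d}$ is $1/d!$ for every $i$, so $c_d=\frac{1}{d!}\sum_i\delta_i$) is the cleaner of the two and is fully rigorous, since the polynomial identity holds at all integers $m\ge0$ and hence identically.

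There is one concrete slip in (iii): the functional equation must carry the sign $(-1)^{d+1}$, not $(-1)^d$. The relevant lemma is that for any polynomial $f$ of degree $d$ with $F(t)=\sum_{m\ge0}f(m)t^m$ one has $\sum_{m\ge1}f(-m)t^m=-F(1/t)$ as rational functions; combining this with reciprocity $L_{P^\circ}(m)=(-1)^dL_P(-m)$ gives
$$\sum_{m\ge1}L_{P^\circ}(m)t^m=(-1)^{d+1}\mathrm{Ehr}_P(1/t)=\frac{\delta_dt+\delta_{d-1}t^2+\cdots+\delta_0t^{d+1}}{(1-t)^{d+1}},$$
and reading off the coefficient of $t$ yields $\delta_d=L_{P^\circ}(1)=\abs{P^\circ\cap\Z^d}$, as you claim. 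With your sign $(-1)^d$ the right-hand side would be the negative of the display above, a series with nonpositive coefficients, which cannot equal a lattice-point count; so the equation as you wrote it is false, though the error is self-correcting the moment one checks positivity. You correctly identified this substitution as the delicate step --- it is indeed the only place in the proof where the rational-function bookkeeping can go wrong --- but the final write-up needs the sign fixed.
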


Hibi proved~\cite{Hib94} the following lower bound on the $\delta_i$, commonly referred to as the \emph{Lower Bound Theorem}:

\begin{thm}\label{thm:lower_bound_thm}
Let $P$ be a $d$-dimensional convex lattice polytope with $\abs{P^\circ\cap\Z^d}>0$. Then $\delta_1\le\delta_i$ for every $2\le i\le d-1$.
\end{thm}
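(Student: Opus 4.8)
The plan is to reduce the hypothesis to a condition on the top of the $\delta$-vector, peel off an easy geometric inequality, and then isolate the genuinely hard part. By Corollary~\ref{cor:known_delta_coefficients}~(iii) the assumption $\abs{P^\circ\cap\Z^d}>0$ is exactly $\delta_d\geq 1$, and Stanley's non-negativity $\delta_i\geq 0$ (\cite{Stan80}) disposes of the trivial case $\delta_1=0$, so I may assume $\delta_1>0$. Translating $P$ so that a lattice point of $P^\circ$ sits at the origin, convexity gives $(m-1)P\subseteq(mP)^\circ$ for every $m\geq 1$: if $0\in P^\circ$ and $x\in P$ then $\tfrac{m-1}{m}x$ is a convex combination of the interior point $0$ and $x$ carrying positive weight on $0$, hence is interior, so $(m-1)x\in(mP)^\circ$. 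Passing to lattice points yields the elementary inequality $L_{P^\circ}(m)\geq L_P(m-1)$ for all $m\geq 1$.

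I would then try to read the desired inequalities straight off this. Setting $c_m:=L_{P^\circ}(m)-L_P(m-1)\geq 0$ and comparing generating functions through Ehrhart--Macdonald reciprocity (Theorem~\ref{thm:Ehrhart-Macdonald_reciprocity}), one computes $c_1=\delta_d-1$, which recovers precisely the hypothesis, while each later inequality $c_m\geq 0$ is a fixed non-negative combination that mixes $\delta_1$, $\delta_{d-1}$, $\delta_d$, and so on. Indeed the numerator produced this way, $\delta^{\ast}(t)-t\,\delta(t)$ with $\delta^{\ast}(t):=\sum_i\delta_i t^{d+1-i}$, is anti-palindromic, so its coefficients cannot all be non-negative; the family $c_m\geq 0$ therefore does not, on its own, separate $\delta_1$ from a single $\delta_i$. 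This tells me a finer, structural description of the individual $\delta_i$ is unavoidable.

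For that I would realise the $\delta$-vector as a Hilbert function. Let $R$ be the semigroup ring of the cone $\{(x,m):x\in mP,\ m\geq 0\}$, graded by the last coordinate, so its Hilbert series is $\sum_{m\geq0}L_P(m)t^m=\delta(t)/(1-t)^{d+1}$. Since $P\cap\Z^d$ affinely spans and $R$ is module-finite over its degree-one subalgebra, over an infinite field there is a degree-one homogeneous system of parameters $\Theta$ of length $d+1$; as $R$ is Cohen--Macaulay (\cite{BR}) this is a regular sequence, and the Artinian reduction $A:=R/\Theta R$ satisfies $\dim_k A_i=\delta_i$. The interior lattice point $v$ supplies a distinguished class $y\in A_1$, and I would aim to prove $\delta_1\leq\delta_i$ by exhibiting an injective multiplication $A_1\hookrightarrow A_i$, the natural candidate being multiplication by $y^{\,i-1}$. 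Combinatorially this is the same idea: take a shellable lattice triangulation of $P$ and the associated half-open box decomposition of the cone, under which $\delta_i$ counts the lattice points at height $i$ lying in the boxes, and let translation by $(v,1)$ be the candidate injection of height-$1$ box points into height-$i$ box points.

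The main obstacle is exactly this injectivity. Because $A$ is Artinian, $y$ is a zero-divisor, so no global non-zero-divisor argument is available; the map must be controlled degree by degree, and it has to fail at $i=d$, in agreement with the opposite inequality $\delta_d\leq\delta_1$. The role of the hypothesis is to ensure that the interiority of $v$ keeps the translated classes alive for $i\leq d-1$: in the box picture, adding $(i-1)(v,1)$ to a height-$1$ box point must land in a genuine height-$i$ box cell rather than on one of the open facets deleted by the shelling. Proving that this holds uniformly over all simplices of the triangulation, while tracking the half-openings imposed by the shelling order, is where the real work lies, and the interior condition on $v$ is precisely what should push the translated point off the excluded faces.
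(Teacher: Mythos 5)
Your proposal is not a complete proof: it reduces Theorem~\ref{thm:lower_bound_thm} to an injectivity claim and then explicitly leaves that claim unproven (``where the real work lies''). Everything before that point is either standard or, as you yourself observe, insufficient. The non-negativity of the $\delta_i$, the inclusion $(m-1)P\subseteq(mP)^\circ$ and the resulting inequalities $L_{P^\circ}(m)\ge L_P(m-1)$ only produce partial-sum (Stanley-type) relations that mix several $\delta_i$ at once --- your anti-palindromy observation correctly shows they cannot give $\delta_1\le\delta_i$ by themselves --- and the Cohen--Macaulay/Artinian-reduction framework with $\dim_k A_i=\delta_i$ is classical setup (incidentally, Cohen--Macaulayness of the semigroup ring is Hochster's theorem; \cite{BR} does not prove it). The entire content of Hibi's theorem is exactly the step you defer: that for $2\le i\le d-1$ the class $y\in A_1$ of the interior point induces an injection $A_1\hookrightarrow A_i$, or combinatorially that translation by $(v,1)$ carries height-one box points injectively into height-$i$ box points compatibly with the half-open decomposition. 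Diagnosing where the difficulty sits, and noting that it must fail at $i=d$, is not an argument for it.

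Moreover, the route you propose for filling the gap is doubtful, not merely incomplete. Injectivity of multiplication by $y^{i-1}$ on $A_1$ is a weak-Lefschetz-type property for one distinguished degree-one element of an Artinian reduction; it is strictly stronger than the inequality $\delta_1\le\delta_i$, it depends on the choice of linear system of parameters, and such Lefschetz properties for Ehrhart rings are delicate and not known to hold in this generality. Note also that the paper itself offers no proof to compare against --- it quotes the result from \cite{Hib94} --- and Hibi's published argument takes a different path: it works with the canonical module of the Ehrhart ring, which by Danilov--Stanley is the ideal spanned by monomials corresponding to interior lattice points of the dilates $mP$, the hypothesis $\abs{P^\circ\cap\Z^d}>0$ entering as the existence of a degree-one element of that module; the inequalities are then extracted from the module structure, with no Lefschetz-type claim anywhere. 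To turn your sketch into a proof you would need either to reconstruct that canonical-module argument or to actually construct and verify a combinatorial injection; as written, the proposal stops precisely at the theorem.
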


As a consequence of the Lower Bound Theorem we have a bound on the number of lattice points in $P$ in terms of the volume of $P$. Note that this bound is sharp: equality is given in each dimension by the $d$-simplex $\mathrm{conv}\{e_1,\ldots,e_d,-e_1-\ldots-e_d\}$, where $e_1,\ldots,e_d$ is a basis of $\Z^d$.

\begin{cor}\label{cor:volume_bound}
Let $P$ be a $d$-dimensional convex lattice polytope with $\abs{P^\circ\cap\Z^d}>0$. Then
$$d!\,\vol{P}\ge(d-1)\abs{P\cap\Z^d}-d^2+3.$$
We have equality if and only if the $\delta$-vector of $P$ equals
$$(1,\abs{P\cap\Z^d}-d-1,\abs{P\cap\Z^d}-d-1,\ldots,\abs{P\cap\Z^d}-d-1,1).$$
\end{cor}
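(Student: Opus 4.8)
The plan is to expand $d!\,\vol{P}$ as the sum of the entries of the $\delta$-vector via Corollary~\ref{cor:known_delta_coefficients}~(iv), and then bound the interior entries from below using the Lower Bound Theorem (Theorem~\ref{thm:lower_bound_thm}). First I would write
$$d!\,\vol{P}=\sum_{i=0}^d\delta_i=\delta_0+\delta_1+\sum_{i=2}^{d-1}\delta_i+\delta_d.$$
The extreme entries are known exactly: Corollary~\ref{cor:known_delta_coefficients} gives $\delta_0=1$, $\delta_1=\abs{P\cap\Z^d}-d-1$, and $\delta_d=\abs{P^\circ\cap\Z^d}$. The hypothesis $\abs{P^\circ\cap\Z^d}>0$ forces $\delta_d\ge 1$, while Theorem~\ref{thm:lower_bound_thm} gives $\delta_i\ge\delta_1$ for each of the $d-2$ middle indices $2\le i\le d-1$. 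Combining these yields
$$d!\,\vol{P}\ge 1+\delta_1+(d-2)\delta_1+1=(d-1)\delta_1+2.$$
Substituting the value of $\delta_1$ and simplifying $(d-1)(\abs{P\cap\Z^d}-d-1)+2=(d-1)\abs{P\cap\Z^d}-d^2+3$ gives the claimed inequality.

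For the equality statement I would track which of the above estimates must be tight. Writing the total slack as $(\delta_d-1)+\sum_{i=2}^{d-1}(\delta_i-\delta_1)$, a sum of nonnegative terms, equality in the bound forces every summand to vanish. Hence $\delta_d=1$ (equivalently $\abs{P^\circ\cap\Z^d}=1$) and $\delta_i=\delta_1$ for every $2\le i\le d-1$; together with $\delta_0=1$ this pins the $\delta$-vector down to $(1,\delta_1,\ldots,\delta_1,1)$, which is exactly the asserted vector once $\delta_1$ is rewritten as $\abs{P\cap\Z^d}-d-1$. Conversely, if the $\delta$-vector has this form, then summing its entries gives $2+(d-1)\delta_1$, matching the right-hand side of the bound, so equality holds.

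The argument is a direct application of the two cited results, so I do not anticipate a serious obstacle. The only point requiring care is the bookkeeping in the equality case: one must verify that equality in the summed bound is equivalent to simultaneous tightness in the Lower Bound Theorem estimates and in $\delta_d\ge 1$, and confirm that these conditions impose no further constraints on the $\delta$-vector beyond the stated form.
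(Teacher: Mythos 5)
Your proof is correct and is precisely the argument the paper intends: the paper's own proof is just a citation of Corollary~\ref{cor:known_delta_coefficients} and Theorem~\ref{thm:lower_bound_thm}, and your write-up fills in exactly that computation, including the equality analysis via the nonnegative slack terms.
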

\begin{proof}
This is a consequence of Corollary~\ref{cor:known_delta_coefficients} parts~(ii) and~(iv), and Theorem~\ref{thm:lower_bound_thm}.
\end{proof}

A convex lattice polytope $P$ is called \emph{Fano} if $P^\circ\cap\Z^d=\{0\}$; i.e.~if the origin is the only interior lattice point of $P$. A convex lattice polytope $P$ is called \emph{reflexive} if the dual polyhedron
$$P^{\vee}:=\{u\in\R^d\mid \left<u,v\right>\le 1\text{ for all }v\in P\}$$
is also a lattice polytope. Clearly any reflexive polytope is Fano. Reflexive polytopes are of particular relevance to toric geometry: they correspond to Gorenstein toric Fano varieties (see~\cite{Bat94}). There are many interesting characterisations of reflexive polytopes (for example the list in~\cite{HM04}).

\begin{thm}\label{thm:Gorenstein_conditions}
Let $P$ be a $d$-dimensional Fano polytope. The following are equivalent:
\begin{itemize}
\item[(i)] $P$ is reflexive;
\item[(ii)] $L_P(m)=L_{\partial P}(m) + L_P(m-1)$ for all $m\in\Z_{>0}$;
\item[(iii)] $d\,\vol{P}=\vol{\partial P}$;
\item[(iv)] $\delta_i=\delta_{d-i}$ for all $0\le i\le d$.
\end{itemize}
\end{thm}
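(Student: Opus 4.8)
The plan is to prove the cycle (i) $\Rightarrow$ (ii) $\Rightarrow$ (iii) $\Rightarrow$ (i) and then to establish (ii) $\Leftrightarrow$ (iv) separately; throughout I would use the decomposition $L_P(m)=L_{\partial P}(m)+L_{P^\circ}(m)$ and Ehrhart--Macdonald reciprocity (Theorem~\ref{thm:Ehrhart-Macdonald_reciprocity}). For (i) $\Rightarrow$ (ii), reflexivity gives the facet description $P=\{x\mid\left<u_F,x\right>\le1\text{ for each facet }F\}$ with every inner normal $u_F$ a primitive lattice vector, so $mP=\{x\mid\left<u_F,x\right>\le m\}$ and a lattice point lies in $(mP)^\circ$ precisely when $\left<u_F,x\right>\le m-1$ for all $F$. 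Hence $(mP)^\circ\cap\Z^d=(m-1)P\cap\Z^d$, i.e.\ $L_{P^\circ}(m)=L_P(m-1)$, and substituting into the decomposition yields (ii).

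For (ii) $\Rightarrow$ (iii), the decomposition again turns (ii) into $L_{P^\circ}(m)=L_P(m-1)$, and reciprocity rewrites this as the polynomial identity $L_P(t)=(-1)^dL_P(-t-1)$. Expanding $(-t-1)^k=(-1)^k(t+1)^k$ and comparing the coefficients of $t^{d-1}$ on the two sides gives $2c_{d-1}=dc_d$; by parts~(i) and~(ii) of Theorem~\ref{thm:Ehrhart_known_coefficients} this is exactly $\vol{\partial P}=d\,\vol{P}$.

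The crux is (iii) $\Rightarrow$ (i), where the Fano hypothesis is indispensable. I would cone each facet $F$ to the origin and compare its Euclidean pyramid volume with the normalised facet volume appearing in $\vol{\partial P}$; the bookkeeping rests on the identity $\det{\mathrm{aff}\,F\cap\Z^d}=\abs{u_F}$ relating the facet sublattice determinant to the Euclidean length of the primitive inner normal. This converts the pyramid decomposition of $\vol{P}$ into
$$d\,\vol{P}=\sum_{F}a_F\,\frac{\mathrm{vol}_{d-1}(F)}{\det{\mathrm{aff}\,F\cap\Z^d}},$$
where $a_F\in\Z_{>0}$ is the lattice distance from the origin to $F$. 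Since $P$ is Fano the origin is interior, forcing $a_F\ge1$ for every $F$, so (iii) becomes $\sum_F(a_F-1)\,\mathrm{vol}_{d-1}(F)/\det{\mathrm{aff}\,F\cap\Z^d}=0$; every summand being non-negative, this forces $a_F=1$ for all $F$, which is precisely reflexivity.

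Finally, for (ii) $\Leftrightarrow$ (iv) I would pass to generating functions. Condition (ii) is equivalent to $L_{P^\circ}(m)=L_P(m-1)$, which at the level of Ehrhart series reads $\mathrm{Ehr}_{P^\circ}(t)=t\,\mathrm{Ehr}_P(t)$. Combining this with the standard interior reciprocity $\mathrm{Ehr}_{P^\circ}(t)=\sum_i\delta_it^{d+1-i}/(1-t)^{d+1}$ (see~\cite{BR}) reduces the identity to $\sum_i\delta_it^{d+1-i}=\sum_i\delta_it^{i+1}$, and equating coefficients gives $\delta_{d-i}=\delta_i$ for all $i$, namely (iv). I expect the main obstacle to be the volume bookkeeping in (iii) $\Rightarrow$ (i): correctly normalising the facet volumes via $\det{\mathrm{aff}\,F\cap\Z^d}=\abs{u_F}$ and using the Fano condition to guarantee $a_F\ge1$. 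The remaining implications are routine once the decomposition and reciprocity are in place.
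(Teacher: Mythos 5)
Your proof is correct, but there is an important contextual point: the paper never proves Theorem~\ref{thm:Gorenstein_conditions} at all. It is quoted as a compilation of known results, with the characterisations deferred to the list in~\cite{HM04}, condition~(iv) attributed to Hibi's Palindromic Theorem~\cite{Hib91} and to Stanley's result on Gorenstein rings~\cite{Sta78}. So there is no in-paper argument to compare against; what you have produced is a self-contained reconstruction of the standard proofs, and it checks out. Your cycle (i)~$\Rightarrow$~(ii)~$\Rightarrow$~(iii)~$\Rightarrow$~(i) together with (ii)~$\Leftrightarrow$~(iv) covers all the equivalences. The individual steps are sound: integrality of $\left<u_F,x\right>$ for lattice $x$ legitimately converts the strict inequality $\left<u_F,x\right><m$ into $\left<u_F,x\right>\le m-1$ in (i)~$\Rightarrow$~(ii); comparing coefficients of $t^{d-1}$ in the polynomial identity $L_P(t)=(-1)^dL_P(-t-1)$ and invoking Theorem~\ref{thm:Ehrhart_known_coefficients}~(i),~(ii) gives (iii); the pyramid decomposition with the lattice identity $\det{\mathrm{aff}\,F\cap\Z^d}=\abs{u_F}$ (valid for primitive $u_F$) plus integrality of the lattice distances $a_F$ correctly forces $a_F=1$ on every facet, whence $P^\vee=\mathrm{conv}\{u_F\}$ is a lattice polytope; and the series form of reciprocity, $\mathrm{Ehr}_{P^\circ}(t)=\left(\delta_dt+\delta_{d-1}t^2+\ldots+\delta_0t^{d+1}\right)/(1-t)^{d+1}$, turns (ii) into palindromicity of the $\delta$-vector. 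Two small points to tidy: with the inequality $\left<u_F,x\right>\le 1$ the vector $u_F$ is an \emph{outer}, not inner, normal; and in (i)~$\Rightarrow$~(ii) you should say explicitly that the representation $P=\{x\mid\left<u_F,x\right>\le 1,\ u_F\in\Z^d\}$ comes from $P=(P^\vee)^\vee$, which uses $0\in P^\circ$, i.e.\ the Fano hypothesis. In short, your route buys the reader a complete argument from first principles (Ehrhart--Macdonald reciprocity, Stanley's $\delta$-vector reciprocity, and elementary lattice geometry), where the paper simply leans on the literature.
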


Theorem~\ref{thm:Gorenstein_conditions}~(iv) is commonly known as \emph{Hibi's Palindromic Theorem}~\cite{Hib91} and can be generalised to duals of non-reflexive polytopes~\cite{FK08}. It is a consequence of a more general result of Stanley's~\cite{Sta78} concerning Gorenstein rings. Clearly any polytope giving equality in Corollary~\ref{cor:volume_bound} must be reflexive.

\begin{rem}\label{rem:volume_for_reflexive}
Of course, as a consequence of equation~\eqref{eq:explicit_boundary_volume} and Theorem~\ref{thm:Gorenstein_conditions}~(iii), one can add the equivalent condition:
\begin{itemize}
\item[(v)] $\displaystyle\vol{P}=\frac{1}{d!}\sum_{m=0}^n(-1)^{n+m}\left({d-1\choose n-m}+(-1)^{d-1}{d-1\choose n+m}\right)\abs{\partial(mP)\cap\Z^d}.$
\end{itemize}
\end{rem}

We are now in a position to express the volume of a reflexive polytope in terms of the number of lattice points in the first $n$ dilations.

\begin{cor}\label{cor:volume_for_reflexive}
Let $P$ be a $d$-dimensional reflexive polytope. Then 
\begin{equation}\label{eq:volume_for_reflexive}
\vol{P}=\frac{1}{d!}\sum_{m=0}^n(-1)^{n+m}\left({d\choose n-m}+(-1)^{d-1}{d\choose n+m+1}\right)\abs{mP\cap\Z^d},
\end{equation}
where $n:=\floor{d/2}$.
\end{cor}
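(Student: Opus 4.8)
The plan is to derive~\eqref{eq:volume_for_reflexive} by feeding the reflexivity relation between $L_P$ and $L_{\partial P}$ into the boundary-volume expression already recorded in Remark~\ref{rem:volume_for_reflexive}~(v). First I would note that since $P$ is reflexive, Theorem~\ref{thm:Gorenstein_conditions}~(ii) gives $L_P(m)=L_{\partial P}(m)+L_P(m-1)$; combined with the tautology $L_P(m)=L_{\partial P}(m)+L_{P^\circ}(m)$ this yields $L_{P^\circ}(m)=L_P(m-1)$, and hence $L_{\partial P}(m)=L_P(m)-L_P(m-1)$ for every $m\in\Z_{>0}$. Writing $a_m:=(-1)^{n+m}\big({d-1\choose n-m}+(-1)^{d-1}{d-1\choose n+m}\big)$ for the coefficients appearing in Remark~\ref{rem:volume_for_reflexive}~(v), the goal becomes to re-express $d!\,\vol{P}=\sum_{m=0}^n a_m L_{\partial P}(m)$ purely in terms of the dilation counts $\abs{mP\cap\Z^d}=L_P(m)$.

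Second, I would substitute $L_{\partial P}(m)=L_P(m)-L_P(m-1)$ for $1\le m\le n$, handle the $m=0$ term via the convention $L_{\partial P}(0)=1=L_P(0)$, and shift the index in the resulting sum so that $\sum_{m=1}^n a_m L_P(m-1)=\sum_{m=0}^{n-1}a_{m+1}L_P(m)$. Collecting the coefficient of each $L_P(m)$ then produces $a_m-a_{m+1}$ for $0\le m\le n-1$ and $a_n$ for $m=n$. It remains to check that these match the target coefficients $b_m:=(-1)^{n+m}\big({d\choose n-m}+(-1)^{d-1}{d\choose n+m+1}\big)$ of~\eqref{eq:volume_for_reflexive}.

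Third, the identity $a_m-a_{m+1}=b_m$ for $0\le m\le n-1$ follows from two applications of Pascal's rule, namely ${d-1\choose n-m}+{d-1\choose n-m-1}={d\choose n-m}$ and ${d-1\choose n+m}+{d-1\choose n+m+1}={d\choose n+m+1}$, after factoring out the common sign $(-1)^{n+m}$; the sign flip hidden in $a_{m+1}$ turns the subtraction into the addition that makes Pascal applicable. The top term needs a separate verification: one computes $a_n=1+(-1)^{d-1}{d-1\choose 2n}$ and compares it with $b_n=1+(-1)^{d-1}{d\choose 2n+1}$, splitting into the even case $d=2n$, where the two binomials vanish leaving $a_n=b_n=1$, and the odd case $d=2n+1$, where ${2n\choose 2n}={2n+1\choose 2n+1}=1$ forces $a_n=b_n=2$.

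I expect the main obstacle to be the careful reconciliation of the two endpoints rather than the interior identity. At $m=0$ the coefficient $a_0$ carries the factor $1+(-1)^{d-1}$, which vanishes in even dimensions, so I must confirm that treating the convention term $a_0L_{\partial P}(0)$ as $a_0L_P(0)$ still yields exactly $a_0-a_1=b_0$ through the same Pascal computation; at $m=n$ the absence of any $L_P(n-1)$ contribution leaves the coefficient equal to $a_n$ alone, and it is precisely the dimension-parity split above that guarantees no spurious sign or factor of $2$ survives. Once both endpoints are shown to agree with the generic coefficient, summing over $0\le m\le n$ delivers~\eqref{eq:volume_for_reflexive}.
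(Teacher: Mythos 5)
Your proposal is correct and is exactly the paper's argument: the paper's proof is a one-line citation of Theorem~\ref{thm:Gorenstein_conditions}~(ii), Remark~\ref{rem:volume_for_reflexive}, and ``the recursive definition of the binomial coefficient'' (Pascal's rule), which are precisely the three ingredients you use. You have simply carried out in full the substitution $L_{\partial P}(m)=L_P(m)-L_P(m-1)$, the index shift, and the endpoint checks that the paper leaves implicit, and all of your computations (including the parity split at $m=n$) are accurate.
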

\begin{proof}
This follows from Theorem~\ref{thm:Gorenstein_conditions}~(ii), Remark~\ref{rem:volume_for_reflexive}, and the recursive definition of the binomial coefficient.
\end{proof}

It is tempting to conjecture that the converse of Corollary~\ref{cor:volume_for_reflexive} is true. However, suppose that $P$ is a three-dimensional convex lattice polytope satisfying equation~\eqref{eq:volume_for_reflexive}. By Theorem~\ref{thm:reeve_3D} we have that:
\begin{align*}
L_P(m)-L_{\partial P}(m)-L_P(m-1)=&(\abs{P\cap\Z^3}-\abs{\partial P\cap\Z^3}-1)m^2-\\
&(\abs{P\cap\Z^3}-\abs{\partial P\cap\Z^3}-1)m+(\abs{P\cap\Z^3}-\abs{\partial P\cap\Z^3}-1).
\end{align*}
Thus we require the additional assumption that $\abs{P^\circ\cap\Z^3}=1$; only then would it follow (by Theorem~\ref{thm:Gorenstein_conditions}~(ii)) that $P$ is reflexive.

More generally we can make use of Theorems~\ref{thm:general_boundary_volume} and~\ref{thm:general_volume} to write down a necessary and sufficient relation between the number of points in, and on the boundary of, the first $N$ dilations of $P$.

\begin{thm}\label{thm:New_Gorenstein_condition}
Let $P$ be  $d$-dimensional Fano polytope. $P$ is reflexive if and only if
$$
0=\left\{\begin{array}{ll}
\displaystyle\sum_{m=0}^N(-1)^{N+m}{2N\choose N+m}\left(d\abs{mP\cap\Z^d}-(N+m)\abs{\partial(mP)\cap\Z^d}\right),&\text{if $d$ is even,}\vspace{0.4em}\\
\multicolumn{2}{l}{\displaystyle\sum_{m=0}^N(-1)^{N+m}{2N\choose N+m}\left(md\abs{mP\cap\Z^d}+\left(N^2-m^2-\frac{md}{2}\right)\abs{\partial(mP)\cap\Z^d}\right),}\\
&\text{if $d$ is odd,}
\end{array}\right.
$$
where $N:=\roof{d/2}$ and $\abs{\partial(0P)\cap\Z^d}:=1$.
\end{thm}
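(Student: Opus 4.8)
The plan is to reduce the statement to the volume criterion for reflexivity. By Theorem~\ref{thm:Gorenstein_conditions}~(iii), a Fano polytope $P$ is reflexive if and only if $d\,\vol{P}=\vol{\partial P}$, equivalently $d\,\vol{P}-\vol{\partial P}=0$. I would substitute the expression for $\vol{P}$ from Theorem~\ref{thm:general_volume} and the expression for $\vol{\partial P}$ from equation~\eqref{eq:explicit_boundary_volume}, form the combination $d\,\vol{P}-\vol{\partial P}$, clear the factorial denominators, and show that the result is a fixed \emph{positive} scalar multiple of the displayed right-hand sum. Since that scalar is nonzero, the sum vanishes precisely when $P$ is reflexive, which is the assertion. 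I treat the two parities separately, exactly as in the proof of Theorem~\ref{thm:general_boundary_volume}.

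When $d=2n$ is even we have $N=n$, and both input formulas run over $0\le m\le n$. Using $(-1)^d=1$, $(-1)^{d-1}=-1$, and the symmetry $\binom{2n}{n-m}=\binom{2n}{n+m}$, the coefficient of $\abs{mP\cap\Z^d}$ in $(d-1)!\,(d\,\vol{P}-\vol{\partial P})$ collapses to $(-1)^{n+m}2\binom{2n}{n+m}$. For the coefficient of $\abs{\partial(mP)\cap\Z^d}$ one combines the contributions from both formulas and applies Pascal's rule, together with the symmetry $\binom{2n-1}{n-m}=\binom{2n-1}{n+m-1}$, to reduce it to $-(-1)^{n+m}2\binom{2n-1}{n+m-1}$. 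The elementary identity $(n+m)\binom{2n}{n+m}=2n\binom{2n-1}{n+m-1}$ then exhibits both coefficients as $1/n$ times the corresponding coefficients $(-1)^{n+m}d\binom{2n}{n+m}$ and $-(-1)^{n+m}(n+m)\binom{2n}{n+m}$ of the claimed even sum. Hence the even sum equals $n\,(d-1)!\,(d\,\vol{P}-\vol{\partial P})$, and the conclusion follows in this case.

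When $d=2n+1$ is odd the volume formula uses $N=n+1$ and runs to $m=n+1$, whereas the boundary formula uses the floor $n$ and stops at $m=n$; reconciling these ranges is the heart of the argument. After clearing $(d-1)!=(2n)!$, I would rewrite every coefficient in terms of the single binomial $\binom{2n+2}{n+1+m}$ appearing in the target. The two identities needed are the factorial computations
\[
\binom{2n+1}{n+1-m}-\binom{2n+1}{n+1+m}=\frac{m}{n+1}\binom{2n+2}{n+1+m}
\]
and
\[
\binom{2n}{n+m}=\frac{(n+1)^2-m^2}{(2n+1)(2n+2)}\binom{2n+2}{n+1+m}.
\]
Substituting these, the coefficient of $\abs{mP\cap\Z^d}$ becomes $(-1)^{n+1+m}\tfrac{1}{(n+1)(2n+1)}\,md\binom{2n+2}{n+1+m}$ and that of $\abs{\partial(mP)\cap\Z^d}$ becomes $(-1)^{n+1+m}\tfrac{1}{(n+1)(2n+1)}\bigl((n+1)^2-m^2-\tfrac{md}{2}\bigr)\binom{2n+2}{n+1+m}$, matching the claimed odd summand up to the common scalar $(n+1)(2n+1)$. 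The extreme index $m=n+1$, which contributes to the volume formula but not to the boundary formula, must be checked by hand: there $\binom{2n+2}{2n+2}=1$ produces exactly the terms $(n+1)d\,\abs{(n+1)P\cap\Z^d}$ and $-\tfrac{(n+1)(2n+1)}{2}\abs{\partial((n+1)P)\cap\Z^d}$ demanded by the sum. Thus the odd sum equals $(n+1)(2n+1)(2n)!\,(d\,\vol{P}-\vol{\partial P})$.

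The main obstacle is precisely the binomial bookkeeping in the odd case: the target features $\binom{2N}{N+m}$ with $N=n+1$, while the two source formulas supply coefficients built from $\binom{2n+1}{\,\cdot\,}$ and $\binom{2n}{\,\cdot\,}$ over mismatched ranges. Once the two displayed factorial identities are established the term-by-term matching is forced, including the boundary index $m=n+1$. Because the resulting scalars $n\,(d-1)!$ and $(n+1)(2n+1)(2n)!$ are strictly positive, the vanishing of each sum is equivalent to $d\,\vol{P}=\vol{\partial P}$, and hence, by Theorem~\ref{thm:Gorenstein_conditions}~(iii), to reflexivity of $P$.
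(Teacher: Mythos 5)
Your proposal is correct and takes essentially the same route as the paper: both reduce reflexivity to the criterion $d\,\vol{P}=\vol{\partial P}$ from Theorem~\ref{thm:Gorenstein_conditions}~(iii), substitute Theorem~\ref{thm:general_volume} and equation~\eqref{eq:explicit_boundary_volume}, split by parity, and match coefficients via binomial identities, including the same handling of the extreme index $m=N$ in the odd case (where the paper notes that the factor $n-m+1$ vanishes). The only difference is organizational -- you form the difference $d\,\vol{P}-\vol{\partial P}$ and exhibit an explicit positive scalar, whereas the paper equates the two sums and massages them into the stated form -- and your identities and scalar factors all check out.
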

\begin{proof}
Suppose first that $d$ is even, so that $N=n$. By Theorem~\ref{thm:Gorenstein_conditions}~(iii), $P$ is reflexive if and only if
\begin{align*}
\sum_{m=0}^n(-1)^{n+m}&\left({d-1\choose n-m}-{d-1\choose n+m}\right)\abs{\partial(mP)\cap\Z^d}=\\
&\sum_{m=0}^n(-1)^{n+m}\left({d\choose n-m}+{d\choose n+m}\right)\left(\abs{mP\cap\Z^d}-\frac{1}{2}\abs{\partial(mP)\cap\Z^d}\right),
\end{align*}
where the left hand side follows from Theorem~\ref{thm:general_boundary_volume}, and the right hand side from Theorem~\ref{thm:general_volume}.

Using the binomial identity
\begin{align*}
{d-1 \choose n-m}-{d-1\choose n+m}=\frac{2m}{d}{d\choose n+m},
\end{align*}
we have that
\begin{align*}
\sum_{m=0}^n(-1)^{n+m}{d\choose n+m}m&\abs{\partial(mP)\cap\Z^d}=\\
&d\sum_{m=0}^n(-1)^{n+m}{d\choose n+m}\left(\abs{mP\cap\Z^d}-\frac{1}{2}\abs{\partial(mP)\cap\Z^d}\right).
\end{align*}
Noticing that $d/2=n$, we obtain our result.

Now suppose that $d$ is odd. In particular, $N=n+1$. In this case we have that $P$ is reflexive if and only if
\begin{align*}
\sum_{m=0}^n(-1)^{n+m}2&{d-1\choose n+m}\abs{\partial(mP)\cap\Z^d}=\\
&\sum_{m=0}^{n+1}(-1)^{n+m}\left({d\choose n+m+1}-{d\choose n+m}\right)\left(\abs{mP\cap\Z^d}-\frac{1}{2}\abs{\partial(mP)\cap\Z^d}\right).
\end{align*}
By standard binomial identities, we have that
\begin{align*}
{d-1\choose n+m}&=\frac{n+m+1}{d}{d\choose n+m+1}\\
&=\frac{n+m+1}{d}{d\choose n-m}\\
&=\frac{(n+m+1)(n-m+1)}{d(d+1)}{d+1\choose n+m+1},
\end{align*}
and that
\begin{align*}
{d\choose n+m+1}-{d\choose n-m+1}=-\frac{2m}{d+1}{d+1\choose n+m+1}.
\end{align*}
Observing that $n-m+1$ vanishes when $m=n+1$, we obtain the equality
\begin{align*}
\sum_{m=0}^{n+1}(-1)^{n+m}&(n+m+1)(n-m+1){d+1\choose n+m+1}\abs{\partial(mP)\cap\Z^d}=\\
&\sum_{m=0}^{n+1}(-1)^{n+m+1}md{d+1\choose n+m+1}\left(\abs{mP\cap\Z^d}-\frac{1}{2}\abs{\partial(mP)\cap\Z^d}\right),
\end{align*}
which is equivalent to
$$\sum_{m=0}^N(-1)^{N+m}{d+1\choose N+m}\left(md\abs{mP\cap\Z^d}+\left((N+m)(N-m)-\frac{md}{2}\right)\abs{\partial(mP)\cap\Z^d}\right)=0.$$
\end{proof}

The conditions given by Theorem~\ref{thm:New_Gorenstein_condition} are summarised in Table~\ref{tab:Gorenstein_condition} for low dimensions.

\begin{table}[t]
\begin{center}
\begin{tabular}{|c|l|}
\hline
$d$&$f(P)$\\
\hline
$3$&$2L_P(2)-L_{\partial P}(2)-4L_P(1)-2L_{\partial P}(1)+8$\\
$4$&$L_P(2)-L_{\partial P}(2)-4L_P(1)+3L_{\partial P}(1)+3$\\
$5$&$2L_P(3)-L_{\partial P}(3)-8L_P(2)+10L_P(1)+11L_{\partial P}(1)-24$\\
$6$&$L_P(3)-L_{\partial P}(3)-6L_P(2)+5L_{\partial P}(2)+15L_P(1)-10L_{\partial P}(1)-10$\\
$7$&$2L_P(4)-L_{\partial P}(4)-12L_P(3)+2L_{\partial P}(3)+28L_P(2)+10L_{\partial P}(2)-28L_P(1)-46L_{\partial P}(1)+80$\\
$8$&$L_P(4)-L_{\partial P}(4)-8L_P(3)+7L_{\partial P}(3)+28L_P(2)-21L_{\partial P}(2)-56L_P(1)+35L_{\partial P}(1)+35$\\
\hline
\end{tabular}
\end{center}
\caption{A $d$-dimensional Fano polytope $P$ is reflexive if and only if the equation $f(P)$ in the second column vanishes (see Theorem~\ref{thm:New_Gorenstein_condition}).}
\label{tab:Gorenstein_condition}
\end{table}

Notice that if $P$ is a reflexive polytope and $d$ is even then, by Theorem~\ref{thm:Gorenstein_conditions}~(ii), Theorem~\ref{thm:New_Gorenstein_condition} reduces to
\begin{align*}
0&=\sum_{m=0}^n(-1)^{n+m}{d\choose n+m}\left(d\abs{mP\cap\Z^d}-(n+m)\left(\abs{mP\cap\Z^d}-\abs{(m-1)P\cap\Z^d}\right)\right)\\
&=\sum_{m=0}^{n-1}(-1)^{n+m}{2n\choose n-m}(n-m)\abs{mP\cap\Z^d}-\\
&\hspace{15em}\sum_{m=0}^{n-1}(-1)^{n+m}{2n\choose n+m+1}(n+m+1)\abs{mP\cap\Z^d}.
\end{align*}
Clearly the right hand side vanishes, so we learn nothing new. The odd-dimensional case is different; the relation is given in Theorem~\ref{thm:New_Gorenstein_relation_odd} and calculated for small dimensions in Table~\ref{tab:New_Gorenstein_relation}.

\begin{thm}\label{thm:New_Gorenstein_relation_odd}
Let $P$ be a reflexive $d$-dimensional polytope, where $d$ is odd. Then
$$\sum_{m=0}^N(-1)^{N+m}{d+2\choose N-m}\abs{mP\cap\Z^d}=0,$$
where $N:=\roof{d/2}$.
\end{thm}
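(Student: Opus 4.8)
The plan is to exploit a symmetry of the Ehrhart polynomial that is forced by reflexivity, rather than to substitute into Theorem~\ref{thm:New_Gorenstein_condition} directly. First I would record the relevant symmetry. Since $P$ is reflexive, Theorem~\ref{thm:Gorenstein_conditions}~(ii) together with $L_P(m)=L_{\partial P}(m)+L_{P^\circ}(m)$ gives $L_{P^\circ}(m)=L_P(m-1)$. Combining this with Ehrhart--Macdonald reciprocity (Theorem~\ref{thm:Ehrhart-Macdonald_reciprocity}) and the hypothesis that $d$ is odd yields
$$L_P(-m)=(-1)^dL_{P^\circ}(m)=-L_P(m-1)\qquad\text{for all }m\in\Z_{>0}.$$
As both sides are polynomials agreeing at infinitely many integers, this is an identity of polynomials, which after the shift $m\mapsto x+1$ reads $L_P(-1-x)=-L_P(x)$. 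Thus $L_P$ is antisymmetric about $x=-1/2$.

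The second ingredient is that $L_P$ has degree $d=2N-1$, so the $(d+2)$-th finite difference of $L_P$ vanishes identically. Writing $d+2=2N+1$, this is the statement
$$\sum_{j=0}^{2N+1}(-1)^j{2N+1\choose j}L_P(N-j)=0,$$
since $\deg L_P<2N+1$. The plan is to split this sum at $j=N$ into a lower part $T$ (the indices $0\le j\le N$) and an upper part, and then to fold the upper part back onto $T$. Substituting $j\mapsto 2N+1-i$ in the upper part, using ${2N+1\choose j}={2N+1\choose 2N+1-j}$ and the antisymmetry $L_P(-1-x)=-L_P(x)$ to rewrite $L_P(N-j)=L_P(i-N-1)=-L_P(N-i)$, one finds that the upper part is again equal to $T$. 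Hence $2T=0$, so $T=0$.

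Finally, reindexing $T$ by $k:=N-j$ and using $(-1)^{N-k}=(-1)^{N+k}$ and $2N+1=d+2$ turns $T=0$ into
$$\sum_{k=0}^N(-1)^{N+k}{d+2\choose N-k}L_P(k)=0,$$
which is the assertion. I expect the only delicate point to be the bookkeeping in the fold: tracking the sign $(-1)^{2N+1-i}=-(-1)^i$ and the argument shift $N-(2N+1-i)=-1-(N-i)$ so that the antisymmetry applies cleanly. Everything else is routine. One could instead obtain the identity by substituting $L_{\partial P}(m)=L_P(m)-L_P(m-1)$ into the odd case of Theorem~\ref{thm:New_Gorenstein_condition} and collapsing the resulting binomial coefficients, mirroring the even-dimensional computation preceding the statement; but the symmetry argument avoids that calculation entirely.
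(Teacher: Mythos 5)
Your proof is correct, and it takes a genuinely different route from the paper's; in fact, the alternative you sketch in your final sentence is exactly what the paper does. The paper substitutes $L_{\partial P}(m)=L_P(m)-L_P(m-1)$ (Theorem~\ref{thm:Gorenstein_conditions}~(ii)) into the odd case of Theorem~\ref{thm:New_Gorenstein_condition} and then collapses the result through a sequence of binomial identities, ending with the constant $\frac{N(2N-1)}{2}$ multiplying the desired sum; that route inherits all the machinery behind Theorem~\ref{thm:New_Gorenstein_condition}, namely Theorems~\ref{thm:general_boundary_volume} and~\ref{thm:general_volume}. You instead use only Theorem~\ref{thm:Gorenstein_conditions}~(ii) and Theorem~\ref{thm:Ehrhart-Macdonald_reciprocity} to obtain the functional equation $L_P(-1-x)=-L_P(x)$ (a genuine polynomial identity, with the minus sign coming from $d$ odd), together with the elementary fact that the $(d+2)$-th finite difference of a degree-$d$ polynomial vanishes. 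Your bookkeeping in the fold checks out: with $d+2=2N+1$ the sum $\sum_{j=0}^{2N+1}(-1)^j{2N+1\choose j}L_P(N-j)$ has an even number of terms and no middle term, the substitution $j\mapsto 2N+1-i$ carries the sign $(-1)^{2N+1-i}=-(-1)^i$ and the argument $N-(2N+1-i)=-1-(N-i)$, and the two minus signs cancel to give upper part equal to lower part, hence $2T=0$. Your argument is shorter and more self-contained, and it exposes exactly where oddness of $d$ enters: for even $d$ the functional equation has sign $+1$, the analogous fold yields $U=-T$, i.e.\ the trivial identity $0=0$, which matches the paper's observation that its reflexivity criterion gives nothing new in even dimensions. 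What the paper's longer computation buys in exchange is the explicit link between this relation and Theorem~\ref{thm:New_Gorenstein_condition}: it exhibits the relation as precisely the residual content of that criterion in odd dimensions, whereas your proof leaves that connection implicit.
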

\begin{proof}
From Theorem~\ref{thm:Gorenstein_conditions}~(ii) and Theorem~\ref{thm:New_Gorenstein_condition} we have that
\begin{align*}
0&=(-1)^N{2N\choose N}N^2+\sum_{m=1}^N(-1)^{N+m}{2N\choose N+m}\Bigg(md\abs{mP\cap\Z^d}+\\
&\hspace{7em}\left(N^2-m^2-\frac{md}{2}\right)\left(\abs{mP\cap\Z^d}-\abs{(m-1)P\cap\Z^d}\right)\Bigg)\\
&=\sum_{m=0}^N(-1)^{N+m}{2N\choose N+m}\left(\frac{md}{2}+N^2-m^2\right)\abs{mP\cap\Z^d}-\\
&\hspace{7em}\sum_{m=0}^N(-1)^{N+m}{2N\choose N+m+1}\left(\frac{(m+1)d}{2}-N^2+(m+1)^2\right)\abs{mP\cap\Z^d}.
\end{align*}
Now:
\begin{align*}
{2N\choose N+m}&\left(\frac{md}{2}+N^2-m^2\right)-{2N\choose N+M+1}\left(\frac{(m+1)d}{2}-N^2+(m+1)^2\right)\\
&=\left({2N\choose N+m}-{2N\choose N+m+1}\right)\frac{md}{2}+\\
&\hspace{4em}\left({2N\choose N+m}+{2N\choose N+m+1}\right)(N^2-m^2)-{2N\choose N+m+1}\left(2m+1+\frac{d}{2}\right),
\end{align*}
which, by standard results on the binomial coefficient, reduces to
\begin{align*}
{2N+1\choose N-m}&\frac{md(2m+1)}{2(2N+1)}+{2N+1\choose N-m}(N^2-m^2)-{2N+1\choose N-m}\left(2m+1+\frac{d}{2}\right)\frac{N-m}{2N+1}\\
&={2N+1\choose N-m}\frac{1}{2N+1}\left(\frac{md}{2}(2m+1)+(N-m)(2N^2+N+2mN-m-\frac{d}{2}-1)\right).
\end{align*}
Since $d=2N-1$ we can simplify the term in brackets:
\begin{align*}
\frac{md}{2}(2m+1)&+(N-m)(2N^2+N+2mN-m-\frac{d}{2}-1)\\
&=\frac{md}{2}(2m+1)+(N-m)(2N^2+N+md-\frac{d}{2}-1)\\
&=\frac{md}{2}(2N+1)+(N-m)(2N^2-\frac{1}{2})\\
&=\frac{N(2N-1)(2N+1)}{2}.
\end{align*}
Thus we have that
$$\frac{N(2N-1)}{2}\sum_{m=0}^N(-1)^{N+m}{2N+1\choose N-m}\abs{mP\cap\Z^d}=0.$$
Finally, since we are free to divide through by a non-zero constant, we obtain our result.
\end{proof}

\begin{table}[t]
\begin{center}
\begin{tabular}{|c|l|}
\hline
$d$&$g(P)$\\
\hline
$3$&$L_P(2)-5L_P(1)+10$\\
$5$&$L_P(3)-7L_P(2)+21L_P(1)-35$\\
$7$&$L_P(4)-9L_P(3)+36L_P(2)-84L	_P(1)+126$\\
$9$&$L_P(5)-11L_P(4)+55L_P(3)-165L_P(2)+330L_P(1)-462$\\
\hline
\end{tabular}
\end{center}
\caption{If $P$ is a $d$-dimensional reflexive polytope then the equation $g(P)$ in the second column will vanish (see Theorem~\ref{thm:New_Gorenstein_relation_odd}).}
\label{tab:New_Gorenstein_relation}
\end{table}

By exploiting Hibi's Palindromic Theorem one can express the $\delta_i$ in terms of $L_P(m)$, for $1\le m\le\floor{d/2}$. When $d=4$ we obtain the $\delta$-vector
\begin{equation}\label{rem:Ehr_for_reflexive_4}
(1,\abs{P\cap\Z^4}-5,\abs{2P\cap\Z^4}-5\abs{P\cap\Z^4}+10,\abs{P\cap\Z^4}-5,1),
\end{equation}
and when $d=5$ we have
\begin{equation}\label{rem:Ehr_for_reflexive_5}
(1,\abs{P\cap\Z^5}-6,\abs{2P\cap\Z^5}-6\abs{P\cap\Z^5}+15,\abs{2P\cap\Z^5}-6\abs{P\cap\Z^5}+15,\abs{P\cap\Z^5}-6,1).
\end{equation}

\begin{cor}\label{cor:Reflexive_dilation_bounds}
If $P$ is a $4$-dimensional reflexive polytope then the following bound is sharp:
$$6\abs{P\cap\Z^4}\le\abs{2P\cap\Z^4}+15.$$

If $P$ is a $5$-dimensional reflexive polytope then the following bounds are sharp:
\begin{align*}
\abs{P\cap\Z^5}&\le\frac{1}{7}\abs{2P\cap\Z^5}+3,\\
\abs{2P\cap\Z^5}&\le\frac{1}{4}\abs{3P\cap\Z^5}+7.
\end{align*}
\end{cor}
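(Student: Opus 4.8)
The plan is to reduce every one of these inequalities to a single application of the Lower Bound Theorem (Theorem~\ref{thm:lower_bound_thm}). Since $P$ is reflexive it is in particular Fano, so $\abs{P^\circ\cap\Z^d}=1>0$ and the hypothesis of Theorem~\ref{thm:lower_bound_thm} is met; hence $\delta_1\le\delta_2$. I would combine this one inequality with the explicit palindromic $\delta$-vectors recorded in~\eqref{rem:Ehr_for_reflexive_4} and~\eqref{rem:Ehr_for_reflexive_5} (which follow from Hibi's Palindromic Theorem, Theorem~\ref{thm:Gorenstein_conditions}~(iv), together with Corollary~\ref{cor:known_delta_coefficients}) and read off each bound after elementary rearrangement.

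For $d=4$, equation~\eqref{rem:Ehr_for_reflexive_4} gives $\delta_1=\abs{P\cap\Z^4}-5$ and $\delta_2=\abs{2P\cap\Z^4}-5\abs{P\cap\Z^4}+10$. Substituting into $\delta_1\le\delta_2$ and collecting terms turns it directly into $6\abs{P\cap\Z^4}\le\abs{2P\cap\Z^4}+15$.

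For $d=5$ the first bound is equally direct: with $\delta_1=\abs{P\cap\Z^5}-6$ and $\delta_2=\abs{2P\cap\Z^5}-6\abs{P\cap\Z^5}+15$ from~\eqref{rem:Ehr_for_reflexive_5}, the inequality $\delta_1\le\delta_2$ rearranges to $7\abs{P\cap\Z^5}\le\abs{2P\cap\Z^5}+21$. The second bound involves $\abs{3P\cap\Z^5}=L_P(3)$, which I would extract from Stanley's generating function: expanding $(1-t)^{-(d+1)}=\sum_{k\ge0}{k+d\choose d}t^k$ yields $L_P(m)=\sum_{i=0}^{d}\delta_i{m+d-i\choose d}$. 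Using $\delta_0=\delta_5=1$ and $\delta_2=\delta_3$, a short computation gives $L_P(2)=21+6\delta_1+\delta_2$ and $L_P(3)=56+21\delta_1+7\delta_2$, so that $L_P(3)-4L_P(2)+28=3(\delta_2-\delta_1)$. Since the Lower Bound Theorem makes the right-hand side non-negative, this is exactly $\abs{2P\cap\Z^5}\le\frac{1}{4}\abs{3P\cap\Z^5}+7$.

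For sharpness I would invoke the reflexive simplex $\mathrm{conv}\{e_1,\ldots,e_d,-e_1-\ldots-e_d\}$ singled out after Corollary~\ref{cor:volume_bound}: its $\delta$-vector is $(1,1,\ldots,1)$, so $\delta_1=\delta_2=1$ and equality holds in each bound above. I anticipate no real obstacle; the only step demanding care is the bookkeeping behind the second five-dimensional bound, where the binomial coefficients ${m+d-i\choose d}$ must be combined with the palindromic symmetry before the clean cancellation $L_P(3)-4L_P(2)+28=3(\delta_2-\delta_1)$ emerges.
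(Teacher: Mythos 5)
Your proof is correct, and for the $4$-dimensional bound and the first $5$-dimensional bound it coincides exactly with the paper's: both apply Theorem~\ref{thm:lower_bound_thm} in the form $\delta_1\le\delta_2$ to the palindromic $\delta$-vectors~\eqref{rem:Ehr_for_reflexive_4} and~\eqref{rem:Ehr_for_reflexive_5}. The only divergence is the second $5$-dimensional bound. The paper obtains it by citing Theorem~\ref{thm:New_Gorenstein_relation_odd}, which for $d=5$ gives the linear relation $\abs{3P\cap\Z^5}-7\abs{2P\cap\Z^5}+21\abs{P\cap\Z^5}-35=0$, and combining this with~\eqref{rem:Ehr_for_reflexive_5}; you instead rederive what is needed directly from Stanley's generating function, expanding $L_P(m)=\sum_{i}\delta_i{m+d-i\choose d}$ and using palindromicity to reach $L_P(3)-4L_P(2)+28=3(\delta_2-\delta_1)\ge0$. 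The two routes carry identical mathematical content: substituting the paper's linear relation into $\abs{3P\cap\Z^5}+28-4\abs{2P\cap\Z^5}$ produces the same quantity $3(\delta_2-\delta_1)$. What your route buys is self-containment -- it bypasses Theorem~\ref{thm:New_Gorenstein_relation_odd}, whose proof rests on the heavier machinery of Theorems~\ref{thm:general_boundary_volume} and~\ref{thm:general_volume} -- at the cost of redoing by hand a computation the paper has already packaged as a theorem. Your sharpness argument (the simplex $\mathrm{conv}\{e_1,\ldots,e_d,-e_1-\ldots-e_d\}$ associated with $\mathbb{P}^d$, whose $\delta$-vector $(1,1,\ldots,1)$ forces $\delta_1=\delta_2$ and hence equality throughout) is the same as the paper's choice of examples.
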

\begin{proof}
By Theorem~\ref{thm:lower_bound_thm} we have that $\delta_1\le\delta_2$. Applying this to~\eqref{rem:Ehr_for_reflexive_4} gives
\begin{align*}
6\abs{P\cap\Z^4}\le\abs{2P\cap\Z^4}+15,&\text{ when }d=4,\\
\abs{P\cap\Z^5}\le\frac{1}{7}\abs{2P\cap\Z^5}+3,&\text{ when }d=5.
\end{align*}
In the case when $d=5$ we apply Theorem~\ref{thm:New_Gorenstein_relation_odd} to~\eqref{rem:Ehr_for_reflexive_5}, obtaining the second bound.

If $P$ is a $4$-dimensional reflexive polytope such that $6\abs{P\cap\Z^4}=\abs{2P\cap\Z^4}+15$ then it has $\delta$-vector
$$(1,\abs{P\cap\Z^4}-5,\abs{P\cap\Z^4}-5,\abs{P\cap\Z^4}-5,1)$$
and $4!\,\vol{P}=3\abs{P\cap\Z^4}-13$. These conditions are satisfied by the simplex associated with $\mathbb{P}^4$ (see the remark preceding Corollary~\ref{cor:volume_bound}).

Suppose that $P$ is a $5$-dimensional reflexive polytope attaining both of the bounds above. Then
\begin{align*}
\abs{2P\cap\Z^5}&=7\abs{P\cap\Z^5}-21,\\
\text{and }\abs{3P\cap\Z^5}&=28\abs{P\cap\Z^5}-112.
\end{align*}
In particular, the $\delta$-vector is given by
$$(1,\abs{P\cap\Z^5}-6,\abs{P\cap\Z^5}-6,\abs{P\cap\Z^5}-6,\abs{P\cap\Z^5}-6,1),$$
and $5!\,\vol{P}=4\abs{P\cap\Z^5}-22$. An example satisfying these conditions is the simplex associated with $\mathbb{P}^5$.
\end{proof}

The examples given in Corollary~\ref{cor:Reflexive_dilation_bounds} are not unique. A search through {\O}bro's classification of the smooth polytopes in dimensions $4$ and $5$ (which form a subset of the reflexive polytopes) gives many more examples\footnote{\href{http://grdb.lboro.ac.uk/search/toricsmooth?id_cmp=in&id=24,25,127,128,138,139,144,145,147}{\tiny\texttt{http://grdb.lboro.ac.uk/search/toricsmooth?id\_cmp=in\&id=24,25,127,128,138,139,144,145,147}}\\\indent\href{http://grdb.lboro.ac.uk/search/toricsmooth?id_cmp=in&id=148,149,950,954,955,989,990,1008,1009,1010,1013}{\tiny\texttt{http://grdb.lboro.ac.uk/search/toricsmooth?id\_cmp=in\&id=148,149,950,954,955,989,990,1008,1009,1010,1013}}}. These are recorded in Table~\ref{tab:Reflexive_dilation_bounds}.

\begin{table}[t]
\begin{center}
\begin{tabular}{|c|l|}
\hline
$d$&ID\\
\hline
$4$&$24,25,127,128,138,139,144,145,147$\\
\hline
$5$&$148,149,950,954,955,989,990,1008,1009,1010,1013$\\
\hline
\end{tabular}
\vspace{1em}
\end{center}
\caption{The smooth polytopes attaining the bounds in Corollary~\ref{cor:Reflexive_dilation_bounds}. The ID refers to the ID of the polytope in the online \href{http://grdb.lboro.ac.uk/}{Graded Ring Database}; the data was calculated using~\cite{Obr07}.}
\label{tab:Reflexive_dilation_bounds}
\end{table}

\section{Applications to Smooth Polytopes}\label{sec:applications_to_smooth}
Let the number of $i$-dimensional faces of a polytope $P$ be denoted by $f_i$. The vector $(f_0,f_1,\ldots,f_{d-1})$ is called the \emph{$f$-vector} of $P$. By convention $f_{-1}=f_d=1$, representing the empty face $\emptyset$ and the entire polytope $P$. The $f$-vector satisfies \emph{Euler's relation}
\begin{equation}\label{eq:Euler}
\sum_{i=-1}^d(-1)^if_i=0.
\end{equation}
When $P$ is simplicial (i.e. the facets of $P$ are $(d-1)$-simplicies) the \emph{Dehn-Sommerville equations} give some additional relations amongst the $f_i$. Conjectured by Dehn and first proved by Sommerville, these equations did not become widely known until they were rediscovered by Klee.

\begin{thm}[\cite{Klee64}]\label{thm:Dehn-Sommerville}
Let $P$ be a $d$-dimensional simplicial lattice polytope with $f$-vector $(f_0,f_1,\ldots,f_{d-1})$. Then
$$f_i=\sum_{j=i}^{d-1}(-1)^{d-1-j}{j+1\choose i+1}f_j,\quad\text{ for }1\le i\le d-2.$$
\end{thm}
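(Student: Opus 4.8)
The plan is to prove Theorem~\ref{thm:Dehn-Sommerville} by a double-counting argument combined with Euler's relation~\eqref{eq:Euler} applied locally at each face. First I would fix $i$ with $1\le i\le d-2$ and count, in two ways and with appropriate signs, the incident pairs $(F,G)$ in which $F$ is an $i$-face and $G$ is a $j$-face of $P$ with $F\subseteq G$, summed over $j$.

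The local ingredient is this: fix an $i$-face $F$. The faces $G$ of $P$ with $F\subseteq G\subseteq P$ form the face lattice of a polytope $P/F$ of dimension $d-i-1$ (the quotient, or \emph{link}, of $P$ along $F$), where a face $G$ of dimension $j$ corresponds to a face of $P/F$ of dimension $j-i-1$. Applying Euler's relation~\eqref{eq:Euler} to $P/F$ and re-indexing yields the local identity
$$\sum_{j=i}^{d}(-1)^{j-i-1}\,\#\{G\mid F\subseteq G,\ \dim G=j\}=0,$$
in which the $j=i$ term counts $F$ itself and the $j=d$ term counts $P$ itself.

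Next I would sum this identity over all $f_i$ choices of $i$-face $F$. Because $P$ is simplicial, every $j$-face with $j\le d-1$ is a $j$-simplex and therefore contains exactly $\binom{j+1}{i+1}$ faces of dimension $i$; hence summing the incidence count over $F$ replaces $\#\{G\}$ by $\binom{j+1}{i+1}f_j$ for each $i\le j\le d-1$. The top term $j=d$ is exceptional, since $P$ is in general not a simplex, but every $i$-face lies in $P$, so that term simply contributes $(-1)^{d-i-1}f_i$. Collecting everything gives
$$\sum_{j=i}^{d-1}(-1)^{j-i-1}\binom{j+1}{i+1}f_j=(-1)^{d-i-2}f_i,$$
and a routine sign manipulation (multiply through by $(-1)^{i+1}$ and then by $(-1)^{d-1}$, using $(-1)^{d-1+j}=(-1)^{d-1-j}$) rearranges this into the stated form.

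The main obstacle is the local Euler relation: one must justify that the faces of $P$ lying above a fixed face $F$ are exactly the faces of a $(d-i-1)$-dimensional polytope, so that~\eqref{eq:Euler} applies verbatim. This is a standard structural fact about polytopes, and it (rather than simpliciality) is what makes the local Euler characteristic available; simpliciality enters only through the clean incidence count $\binom{j+1}{i+1}$. The remaining risk is purely bookkeeping, namely keeping the signs and index shifts consistent and correctly isolating the exceptional top-dimensional term.
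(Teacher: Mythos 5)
The paper gives no proof of this theorem at all: it is quoted directly from Klee \cite{Klee64}, so there is no internal argument to compare yours against, and your proposal must be judged on its own merits. Judged so, it is correct, and it is essentially the classical proof of the Dehn--Sommerville relations. The one non-trivial structural input is exactly the one you flag: for any polytope $P$ and any $i$-face $F$, the interval $[F,P]$ in the face lattice is the face lattice of a quotient polytope $P/F$ of dimension $d-i-1$ (an iterated vertex figure), so Euler's relation \eqref{eq:Euler} applies to it and yields your local identity $\sum_{j=i}^{d}(-1)^{j-i-1}N_j(F)=0$, where $N_j(F)$ counts $j$-faces containing $F$. Summing over the $f_i$ choices of $F$ and swapping the order of summation, simpliciality gives $\sum_F N_j(F)={j+1\choose i+1}f_j$ for $i\le j\le d-1$, while the exceptional term $j=d$ contributes $(-1)^{d-i-1}f_i$; this produces $\sum_{j=i}^{d-1}(-1)^{j-i-1}{j+1\choose i+1}f_j=(-1)^{d-i}f_i$, and your sign manipulation, using $(-1)^{d+j-2i-1}=(-1)^{d-1-j}$, turns this into the stated identity exactly. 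The bookkeeping, including the dimension shift $\dim(G/F)=j-i-1$ and the identification of the $j=i$ and $j=d$ terms with the empty face and the full quotient polytope, is all consistent.
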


A $d$-dimensional convex lattice polytope $P$ is called \emph{smooth} if the vertices of any facet of $P$ form a $\Z$-basis of the ambient lattice $\Z^d$. Any such $P$ is simplicial and reflexive. Smooth polytopes are in bijective correspondence with smooth toric Fano varieties, and as such have been the subject of much study (see, for example,~\cite{Baty91,Obr07}).

In~\cite{ParkH03} Park investigated the $f$-vector of smooth polytopes of dimension $3\le d\le 5$ and established weak bounds on the $f_i$. We shall make use of Theorem~\ref{thm:general_boundary_volume} to give an explicit description of the $f$-vector in those dimensions.

\begin{thm}\label{thm:Smooth_f-vectors}
If $P$ is a $3$-dimensional smooth polytope then its $f$-vector is given by
$$\big(\abs{\partial P\cap\Z^3},3\abs{\partial P\cap\Z^3}-6,2\abs{\partial P\cap\Z^3}-4\big).$$
If $P$ is a $4$-dimensional smooth polytope then its $f$-vector is given by
\begin{align*}
\big(\abs{\partial P\cap\Z^4},\abs{\partial(2P)\cap\Z^4}-\abs{\partial P\cap\Z^4},2\abs{\partial(2P)\cap\Z^4}-4\abs{\partial P\cap\Z^4},&\\
\abs{\partial(2P)\cap\Z^4}-&2\abs{\partial P\cap\Z^4}\big).
\end{align*}
If $P$ is a $5$-dimensional smooth polytope then its $f$-vector is given by
\begin{align*}
\big(\abs{\partial P\cap\Z^5},\abs{\partial(2P)\cap\Z^5}-\abs{\partial P\cap\Z^5},4\abs{\partial(2P)\cap\Z^5}-14\abs{\partial P\cap\Z^5}+20,&\\
5\abs{\partial(2P)\cap\Z^5}-20\abs{\partial P\cap\Z^5}+30,2\abs{\partial(2P)\cap\Z^5}-&8\abs{\partial P\cap\Z^5}+12\big).
\end{align*}
\end{thm}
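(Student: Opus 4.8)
The plan is to reconstruct the entire $f$-vector from just two of its entries---the number of vertices $f_0$ and the number of facets $f_{d-1}$---using the linear relations available for any simplicial polytope. Since a smooth polytope is both simplicial and reflexive, I may invoke Euler's relation~\eqref{eq:Euler} and the Dehn--Sommerville equations (Theorem~\ref{thm:Dehn-Sommerville}).

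First I would identify the two anchors. Each facet $F$ of a smooth polytope is a $(d-1)$-simplex whose vertices form a $\Z$-basis of $\mathrm{aff}\,F\cap\Z^d$; in particular $F$ contains no lattice points besides its $d$ vertices. Hence every boundary lattice point of $P$ is a vertex, so that $f_0=L_{\partial P}(1)$. The same unimodularity shows that each facet contributes $\mathrm{vol}_{d-1}(F)/\det{\mathrm{aff}\,F\cap\Z^d}=1/(d-1)!$ to the boundary volume, whence $\vol{\partial P}=f_{d-1}/(d-1)!$. Thus $f_{d-1}=(d-1)!\,\vol{\partial P}$, and Theorem~\ref{thm:general_boundary_volume} (equivalently Table~\ref{tab:boundary_volume}) expresses $f_{d-1}$ in terms of $L_{\partial P}(1)$ and, for $d=4,5$, also $L_{\partial P}(2)$.

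With $f_0$ and $f_{d-1}$ in hand, the remaining entries fall out by linear algebra, since the $h$-vector of a simplicial $d$-polytope has only $\floor{d/2}$ free parameters. For $d=3$, Euler's relation recovers $f_1$ directly from $f_0$ and $f_2$. For $d=4$, the Dehn--Sommerville equation at $i=2$ gives $f_2=2f_3$, and Euler's relation then yields $f_1=f_0+f_2-f_3$. For $d=5$, the equation at $i=3$ gives $f_3=\frac{5}{2}f_4$, while the equation at $i=1$ combined with Euler's relation produces a $2\times 2$ system in $f_1,f_2$ which I would solve by substitution.

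I expect the dimension-$5$ case to be the only computationally delicate step: the two unknowns $f_1,f_2$ must be solved for simultaneously, and the substitution of $f_0=L_{\partial P}(1)$, $f_3=5L_{\partial P}(2)-20L_{\partial P}(1)+30$, and $f_4=2L_{\partial P}(2)-8L_{\partial P}(1)+12$ must collapse to $f_1=L_{\partial P}(2)-L_{\partial P}(1)$ and $f_2=4L_{\partial P}(2)-14L_{\partial P}(1)+20$. The conceptual heart of the argument is the pair of anchor identities; the rest is bookkeeping with binomial coefficients.
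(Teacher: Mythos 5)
Your proposal is correct and follows essentially the same route as the paper: the two anchors $f_0=\abs{\partial P\cap\Z^d}$ and $f_{d-1}=(d-1)!\,\vol{\partial P}$ coming from unimodularity of the facets, Theorem~\ref{thm:general_boundary_volume} to express $f_{d-1}$ in terms of $\abs{\partial(mP)\cap\Z^d}$, and then the Dehn--Sommerville equations together with Euler's relation~\eqref{eq:Euler} to recover the remaining entries. The only cosmetic differences are which of the (redundant) linear relations you solve first in each dimension, and that your justification of the anchor $f_0=\abs{\partial P\cap\Z^d}$ is actually more explicit than the paper's.
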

\begin{proof}
Let $P$ be a $d$-dimensional smooth polytope. By definition each facet $F$ of $P$ is a simplex whose vertices generate the underlying lattice $\Z^d$. Hence $\vol{F}=1/(d-1)!$, so
$$(d-1)!\,\vol{\partial P}=f_{d-1}.$$
Furthermore, $\abs{\partial P\cap \Z^n}=f_0$.
\begin{itemize}
\item[$d=3$:]
Theorem~\ref{thm:Dehn-Sommerville} gives $2f_1=3f_2$, and Theorem~\ref{thm:general_boundary_volume} yields $f_2=2f_0-4$. Thus the $f$-vector is uniquely defined in terms of $f_0$.
\item[$d=4$:]
In this case Theorem~\ref{thm:Dehn-Sommerville} gives $f_2=2f_3$. Applying~\eqref{eq:Euler} we obtain $f_1=f_0+f_3$. Finally, Theorem~\ref{thm:general_boundary_volume} tells us that $f_3=\abs{\partial(2P)\cap\Z^4}-2f_0$. The result follows.
\item[$d=5$:]
In dimension five Theorem~\ref{thm:Dehn-Sommerville} and equation~\eqref{eq:Euler} give three relations:
\begin{align*}
2f_1&=3f_2-5f_4,\\
2f_3&=5f_4,\\
2f_0-f_2+2f_4&=4.
\end{align*}
From Theorem~\ref{thm:general_boundary_volume} we know that $f_4=2\abs{\partial(2P)\cap\Z^5}-8f_0+12$. Substituting, we see that the $f$-vector is uniquely defined in terms of $\abs{\partial(2P)\cap\Z^5}$ and $\abs{\partial P\cap\Z^5}$.
\end{itemize}
\end{proof}

It is worth noting that Casagrande~\cite{Cas04} proves a sharp bound for $\abs{\partial P\cap\Z^d}$ in terms of the dimension, and Batyrev~\cite[Theorem~2.3.7]{Bat99} gives us a bound on $f_{d-3}$ in terms of $f_{d-2}$. Bremner and Klee~\cite{BK99} tell us a lower bound on $f_1$ in terms of $f_0$ and $d$. These results are collected in the following theorem.

\begin{thm}\label{thm:f_i_bounds}
Let $P$ be $d$-dimensional smooth polytope. Then the following inequalities hold:
\begin{itemize}
\item[(i)] $\abs{\partial P\cap\Z^d}\le\left\{\begin{array}{ll}
3d,&\text{if $d$ is even;}\\
3d-1,&\text{if $d$ is odd.}
\end{array}\right.$
\item[(ii)] $12f_{d-3}\ge(3d-4)f_{d-2}.$
\item[(iii)] $df_0\leq f_1+{d+1\choose 2}.$
\end{itemize}
\end{thm}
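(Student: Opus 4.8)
The plan is to recognise Theorem~\ref{thm:f_i_bounds} as a compilation of three independently-established results, and to reduce each inequality to its cited source. The single observation that links everything to the earlier material is that a smooth polytope is simplicial and reflexive, and that its boundary lattice points are precisely its vertices. Indeed, each facet $F$ is the convex hull of $d$ vertices $v_1,\ldots,v_d$ that form a $\Z$-basis of $\Z^d$; writing a point of $F$ in the coordinates furnished by this basis shows that the only lattice points of $F$ are the $v_i$ themselves. Taking the union over all facets gives $\abs{\partial P\cap\Z^d}=f_0$, exactly the identity already used in the proof of Theorem~\ref{thm:Smooth_f-vectors}.

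For part~(i) I would feed this identity into Casagrande's theorem~\cite{Cas04}, which bounds the number of vertices of a $d$-dimensional simplicial reflexive polytope by $3d$, with the refinement that $3d$ vertices can occur only when $d$ is even (so that the bound drops to $3d-1$ in the odd case). Since $\abs{\partial P\cap\Z^d}=f_0$ is precisely the vertex count, the stated dichotomy follows verbatim. Part~(ii) is a direct appeal to Batyrev~\cite[Theorem~2.3.7]{Bat99}: the inequality $12f_{d-3}\ge(3d-4)f_{d-2}$ is his bound on the number of codimension-three faces in terms of the ridges of a smooth Fano polytope, so no further work is needed once one notes that $P$ is of the required type. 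Part~(iii) is the lower bound on the number of edges of a simplicial $d$-polytope recorded by Bremner and Klee~\cite{BK99}; after rearrangement it reads $f_1\ge df_0-{d+1\choose 2}$, which applies because $P$ is simplicial.

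The content of the argument is therefore bookkeeping rather than new mathematics, and the only genuine care required is in matching conventions. The main point to verify is the vertex identification $\abs{\partial P\cap\Z^d}=f_0$ needed to translate Casagrande's statement, together with a check that each cited bound is stated for, or specialises to, simplicial reflexive (equivalently smooth Fano) polytopes rather than a more restrictive class. I expect part~(i) to carry whatever subtlety there is, since it is the one inequality whose source is phrased in a different language (vertices of a Fano polytope) from the ambient discussion (boundary lattice points); parts~(ii) and~(iii) are immediate once the simplicial hypothesis is in place.
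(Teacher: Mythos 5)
Your proposal is correct and takes essentially the same approach as the paper: the paper gives no proof of this theorem at all, presenting it purely as a compilation of the cited results of Casagrande~\cite{Cas04}, Batyrev~\cite[Theorem~2.3.7]{Bat99}, and Bremner--Klee~\cite{BK99}, which is exactly your reduction. Your explicit check that $\abs{\partial P\cap\Z^d}=f_0$ for a smooth polytope (a lattice point of a facet whose vertices form a $\Z$-basis must be one of those vertices) is the same identity the paper invokes in the proof of Theorem~\ref{thm:Smooth_f-vectors}, so your write-up is, if anything, slightly more careful than the original.
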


Thus we obtain upper and lower bounds on $\abs{\partial(2P)\cap\Z^d}$ when $d=4$ or $5$.

\begin{cor}\label{cor:2P_bounds}
If $P$ is a $4$-dimensional smooth polytope then
$$5\abs{\partial P\cap\Z^4}-10\le\abs{\partial(2P)\cap\Z^4}\leq 5\abs{\partial P\cap\Z^4}.$$
If $P$ is a $5$-dimensional smooth polytope then
$$42\abs{\partial P\cap\Z^5}-105\le 7\abs{\partial(2P)\cap\Z^5}\le 52\abs{\partial P\cap\Z^5}-90.$$
\end{cor}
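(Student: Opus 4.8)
The plan is to feed the explicit $f$-vector descriptions of Theorem~\ref{thm:Smooth_f-vectors} into the inequalities~(ii) and~(iii) of Theorem~\ref{thm:f_i_bounds}; no further input is needed. Throughout, write $a:=\abs{\partial P\cap\Z^d}$ and $b:=\abs{\partial(2P)\cap\Z^d}$, and observe that in both dimensions $d=4$ and $d=5$ the first two entries of the $f$-vector are $f_0=a$ and $f_1=b-a$.

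For the \emph{lower} bounds I would apply Theorem~\ref{thm:f_i_bounds}~(iii). Substituting $f_0=a$ and $f_1=b-a$ into $df_0\le f_1+{d+1\choose 2}$ gives $(d+1)a-{d+1\choose 2}\le b$. Specialising to $d=4$ yields $5a-10\le b$, the first lower bound, and specialising to $d=5$ yields $6a-15\le b$, which after multiplication by $7$ becomes $42a-105\le 7b$. Thus a single inequality, applied to the dimension-independent pair $f_0,f_1$, supplies both lower bounds at once.

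For the \emph{upper} bounds I would apply Theorem~\ref{thm:f_i_bounds}~(ii), namely $12f_{d-3}\ge(3d-4)f_{d-2}$, which this time involves the higher, dimension-specific $f$-vector entries. For $d=4$ this reads $12f_1\ge 8f_2$; inserting $f_1=b-a$ and $f_2=2b-4a$ and rearranging gives $b\le 5a$. For $d=5$ it reads $12f_2\ge 11f_3$; inserting $f_2=4b-14a+20$ and $f_3=5b-20a+30$ and collecting terms gives $7b\le 52a-90$.

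There is no genuine analytic obstacle here: the entire content of the corollary is carried by Theorems~\ref{thm:Smooth_f-vectors} and~\ref{thm:f_i_bounds}, and the proof reduces to substitution and linear rearrangement. The only place demanding care is the index bookkeeping in~(ii)---one must correctly identify $f_{d-3}$ and $f_{d-2}$ (so $f_1,f_2$ when $d=4$ and $f_2,f_3$ when $d=5$) and substitute the matching entries from Theorem~\ref{thm:Smooth_f-vectors} before simplifying.
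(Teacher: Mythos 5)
Your proposal is correct and is essentially identical to the paper's own proof, which consists of the single line ``Apply Theorem~\ref{thm:f_i_bounds}~(ii) and~(iii) to Theorem~\ref{thm:Smooth_f-vectors}''; you have simply written out the substitutions, and your arithmetic (including the index identification $f_{d-3},f_{d-2}$ as $f_1,f_2$ for $d=4$ and $f_2,f_3$ for $d=5$) checks out in all four inequalities.
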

\begin{proof}
Apply Theorem~\ref{thm:f_i_bounds}~(ii) and~(iii) to Theorem~\ref{thm:Smooth_f-vectors}.
\end{proof}

\begin{cor}\label{cor:vol_bounds}
If $P$ is a $4$-dimensional smooth polytope then
$$4!\,\vol{P}\leq 3f_0.$$
If $P$ is a $5$-dimensional smooth polytope then
$$5!\,\vol{P}\le\frac{48f_0-96}{7}.$$
\end{cor}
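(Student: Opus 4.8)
The plan is to reduce both inequalities to the upper bounds on $\abs{\partial(2P)\cap\Z^d}$ that were already established in Corollary~\ref{cor:2P_bounds}. The engine driving this is a clean identity: for a $d$-dimensional smooth polytope one has $d!\,\vol{P}=f_{d-1}$. This follows immediately from two observations recorded in the proof of Theorem~\ref{thm:Smooth_f-vectors}. First, because each facet is a unimodular simplex of normalised volume $1/(d-1)!$, we have $(d-1)!\,\vol{\partial P}=f_{d-1}$. Second, a smooth polytope is reflexive, so Theorem~\ref{thm:Gorenstein_conditions}~(iii) gives $d\,\vol{P}=\vol{\partial P}$. Eliminating $\vol{\partial P}$ between these two relations yields $d!\,\vol{P}=f_{d-1}$.

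With this identity in hand, the next step is simply to substitute the explicit value of $f_{d-1}$ supplied by Theorem~\ref{thm:Smooth_f-vectors}, using the fact that $f_0=\abs{\partial P\cap\Z^d}$ for a smooth polytope. In dimension four this gives $4!\,\vol{P}=\abs{\partial(2P)\cap\Z^4}-2f_0$, and in dimension five it gives $5!\,\vol{P}=2\abs{\partial(2P)\cap\Z^5}-8f_0+12$.

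The final step is to feed in the upper bounds from Corollary~\ref{cor:2P_bounds}. For $d=4$ the bound $\abs{\partial(2P)\cap\Z^4}\le 5f_0$ gives $4!\,\vol{P}\le 5f_0-2f_0=3f_0$ at once. For $d=5$ the bound $7\abs{\partial(2P)\cap\Z^5}\le 52f_0-90$ gives $2\abs{\partial(2P)\cap\Z^5}\le(104f_0-180)/7$; substituting and combining over a common denominator of $7$ yields $5!\,\vol{P}\le(104f_0-180)/7-8f_0+12=(48f_0-96)/7$, as claimed.

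I do not anticipate any genuine obstacle here: once the identity $d!\,\vol{P}=f_{d-1}$ is isolated, the remainder is direct substitution. The only place demanding a little attention is the dimension-five arithmetic, where the common denominator of $7$ must be cleared carefully to reach the stated closed form.
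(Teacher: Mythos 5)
Your proof is correct and follows essentially the same route as the paper: both use $d!\,\vol{P}=(d-1)!\,\vol{\partial P}=f_{d-1}$ (unimodular facets plus reflexivity via Theorem~\ref{thm:Gorenstein_conditions}~(iii)), substitute the expression for $f_{d-1}$ from Theorem~\ref{thm:Smooth_f-vectors}, and apply the upper bounds of Corollary~\ref{cor:2P_bounds}, with identical arithmetic in dimension five. As a minor point, you correctly cite Corollary~\ref{cor:2P_bounds} where the paper's own proof contains a typographical self-reference to Corollary~\ref{cor:vol_bounds}.
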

\begin{proof}
Recall that since $P$ is smooth, $d!\,\vol{P}=(d-1)!\,\vol{\partial P}=f_{d-1}$.
In each case Theorem~\ref{thm:Smooth_f-vectors} tells us the value for $f_{d-1}$. Applying Corollary~\ref{cor:vol_bounds} immediately gives the result in dimension four.

In dimension five we see that
\begin{align*}
7\cdot 5!\,\vol{P}&=7\cdot 4!\,\vol{\partial P}\\
&=2(7\abs{\partial(2P)\cap\Z^5}-28\abs{\partial P\cap\Z^5}+42)\\
&\le 2(24f_0-48),
\end{align*}
where the final inequality is an application of Corollary~\ref{cor:vol_bounds}.
\end{proof}

The smooth polytopes attain either the lower or the upper limit in Corollary~\ref{cor:2P_bounds} are listed\footnote{\href{http://grdb.lboro.ac.uk/search/toricsmooth?id_cmp=in&id=24,25,127,128,138,139,144,145,147}{\tiny\texttt{http://grdb.lboro.ac.uk/search/toricsmooth?id\_cmp=in\&id=24,25,127,128,138,139,144,145,147}}\\\indent\href{http://grdb.lboro.ac.uk/search/toricsmooth?id_cmp=in&id=148,149,950,954,955,989,990,1008,1009,1010,1013}{\tiny\texttt{http://grdb.lboro.ac.uk/search/toricsmooth?id\_cmp=in\&id=148,149,950,954,955,989,990,1008,1009,1010,1013}}} in Table~\ref{tab:limiting_ids}. The upper bound in dimension five is not sharp.

\begin{table}[t]
\begin{center}
\begin{tabular}{|c|c|l|}
\hline
$d$&Equality&ID\\
\hline
\multirow{2}{*}{$4$}&Lower&$24,25,127,128,138,139,144,145,147$\\
&Upper&$63,100$\\
\hline
\multirow{2}{*}{$5$}&Lower&$148,149,950,954,955,989,990,1008,1009,1010,1013$\\
&Upper&None\\
\hline
\end{tabular}
\vspace{1em}
\end{center}
\caption{The smooth polytopes attaining one of the bounds in Corollary~\ref{cor:2P_bounds}. The ID refers to the ID of the polytope in the online \href{http://grdb.lboro.ac.uk/}{Graded Ring Database}; the data was calculated using~\cite{Obr07}.}
\label{tab:limiting_ids}
\end{table}
\section{Reflexive order polytopes}\label{sec:order_polytopes}

Throughout let $Q$ be a finite poset with $d:=\abs{Q}$ elements. Let $\Omega(Q,k)$ denote the number of order--preserving maps $f:Q\rightarrow C_k$, where $C_k$ is the chain with $k\in\Z_{>0}$ elements; i.e.~if $x\leq y$ in $Q$, then $f(x)\leq f(y)$. Then $\Omega(Q,k)$ is a polynomial in $k$ of degree $d$, called the \emph{order polynomial} of $Q$. 

Let $\bar{\Omega}(Q,k)$ denote the number of strictly order--preserving maps $f:Q\to C_k$; i.e.~if $x< y$ in $Q$, then $f(x)<f(y)$. Once again $\bar{\Omega}(Q,k)$ is a polynomial in $k$ of degree $d$; it is called the \emph{strict order polynomial} of $Q$. 

\begin{defn}\label{defn:graded_and_rank_posets}
A poset $Q$ is said to be \emph{graded} if there exists an order--preserving function $f$ such that whenever $y$ covers $x$, $f(y)=f(x)+1$. Equivalently, all maximal chains of $Q$ have the same length $r$. Following Stanley~\cite[Chapter 3.1]{Stan97} we shall call $r$ the \emph{rank} of $Q$. In particular one can adjoin a unique minimum element $\hat{0}$ and unique maximum element $\hat{1}$ to $Q$ to obtain a bounded, graded poset $\hat{Q}$ of rank $r+2$.
\end{defn}

\begin{defn}\label{defn:linear_extension}
A bijective order--preserving map is called a \emph{linear extension} of $Q$. The number of linear extensions is denoted by $e(Q)$.
\end{defn}

Let $e_s(Q)$ denote the number of surjective order--preserving maps $f:Q\to C_s$.

\begin{eg}
If $Q$ is the antichain with $\abs{Q}=d$ then $\bar{\Omega}(Q,k)=\Omega(Q,k)=k^d$ and $e(Q)=d!$. If $Q$ is the chain $C_d$ then $\Omega(Q,k)={d+k-1\choose d}$, $\bar{\Omega}(Q,k)={k \choose d}$, and $e(Q)=1$. 
\end{eg}

\begin{thm}[\cite{Stan70}]\label{thm:order_reciprocity}
Let $Q$ be a finite poset with $\abs{Q}=d$ and order polynomial $\Omega(Q,k)=a_dk^d+\ldots+a_1k+a_0$. Then:
\begin{itemize}
\item[(i)] $\bar{\Omega}(Q,k)=(-1)^d\Omega(Q,-k)$ for all $k\in\Z$;
\item[(ii)] If $Q$ is graded of rank $r$, then  $a_{d-1}=\frac{re(Q)}{2(d-1)!}$;
\item[(iii)] If $Q$ is graded of rank $r$, then $\Omega(Q,-r-k)=(-1)^d\Omega(Q,k)$ for each $k\in \Z$;
\item[(iv)] $a_d=\frac{e(Q)}{d!}$.
\item[(v)] $\Omega(Q,k)=\sum_{s=1}^d e_s(Q){k\choose s}$. 
\item[(vi)] If $Q$ is graded of rank $r$, then $2e_{d-1}(Q)=(d-r+1)e(Q)$.
\end{itemize}
\end{thm}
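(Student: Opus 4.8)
The plan is to isolate $e_{d-1}(Q)$ by computing the two leading coefficients of the order polynomial in two independent ways and comparing them. The natural starting point is the expansion in Theorem~\ref{thm:order_reciprocity}~(v),
$$\Omega(Q,k)=\sum_{s=1}^d e_s(Q){k\choose s},$$
combined with the observation that $e_d(Q)=e(Q)$: a surjective order-preserving map from a $d$-element poset onto the $d$-element chain $C_d$ must be a bijection, hence a linear extension (Definition~\ref{defn:linear_extension}). Since ${k\choose s}$ has degree $s$ in $k$, only the terms $s=d$ and $s=d-1$ affect the top two coefficients of $\Omega(Q,k)$.

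First I would read off these two coefficients. Writing $\Omega(Q,k)=a_dk^d+a_{d-1}k^{d-1}+\ldots$ and expanding
$${k\choose d}=\frac{1}{d!}\prod_{j=0}^{d-1}(k-j),\qquad {k\choose d-1}=\frac{1}{(d-1)!}\prod_{j=0}^{d-2}(k-j),$$
the leading term recovers $a_d=e_d(Q)/d!=e(Q)/d!$, in agreement with Theorem~\ref{thm:order_reciprocity}~(iv). For the next coefficient I would extract the coefficient of $k^{d-1}$ from each binomial: the term ${k\choose d-1}$ contributes its leading coefficient $1/(d-1)!$, while ${k\choose d}$ contributes $-\tfrac{1}{d!}\sum_{j=0}^{d-1}j$. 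This expresses $a_{d-1}$ as an explicit linear combination of $e_{d-1}(Q)$ and $e_d(Q)=e(Q)$.

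The key step is then to eliminate $a_{d-1}$ using the graded hypothesis. Theorem~\ref{thm:order_reciprocity}~(ii) provides the independent evaluation $a_{d-1}=\dfrac{re(Q)}{2(d-1)!}$. Equating the two expressions for $a_{d-1}$ and clearing the factorials collapses everything to a single linear equation in $e_{d-1}(Q)$, $e(Q)$, and $r$, which I would solve for $e_{d-1}(Q)$ to obtain the claimed identity. As a sanity check I would test the two extreme posets from the running example: the chain $C_d$ (where $r=d-1$, $e(Q)=1$, and a direct count of monotone surjections $C_d\to C_{d-1}$ is available) and the antichain (where $r=0$ and $e(Q)=d!$), confirming that the resulting relation holds.

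I expect the only genuine obstacle to be the sign bookkeeping in the subleading coefficient. Everything turns on correctly evaluating the coefficient of $k^{d-1}$ in ${k\choose d}$, namely $-\tfrac{1}{d!}\sum_{j=0}^{d-1}j$, and on combining it with the $e_d(Q)=e(Q)$ contribution using the correct sign; a slip here would alter precisely the coefficient of $r$ in the final identity. Once that single coefficient is pinned down, the remaining manipulation is routine cancellation of factorials, so the proof reduces to this one careful computation together with the inputs (ii), (iv), and (v) of Theorem~\ref{thm:order_reciprocity}.
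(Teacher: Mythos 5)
There is nothing on the paper's side to compare against: Theorem~\ref{thm:order_reciprocity} is quoted from Stanley's paper without proof, and your proposal in any case only addresses part (vi), taking (ii), (iv) and (v) as known inputs (a reasonable reading, since those are standard facts). Your strategy for (vi) is the right one, and your coefficient extraction is correct: from (v), using $e_d(Q)=e(Q)$, the coefficient of $k^{d-1}$ in $\Omega(Q,k)$ is
$$a_{d-1}=\frac{e_{d-1}(Q)}{(d-1)!}-\frac{e(Q)}{d!}{d\choose 2}=\frac{e_{d-1}(Q)}{(d-1)!}-\frac{e(Q)}{2(d-2)!}.$$

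The gap is in the final step, which you leave implicit and misreport. Equating the display above with (ii) and multiplying by $2(d-1)!$ gives $2e_{d-1}(Q)-(d-1)e(Q)=re(Q)$, that is
$$2e_{d-1}(Q)=(d+r-1)\,e(Q),$$
which is \emph{not} the printed identity $2e_{d-1}(Q)=(d-r+1)e(Q)$. The statement (vi) as printed is erroneous (the signs of $r$ and $1$ are transposed), so your claim that equating the two expressions ``obtains the claimed identity'' cannot be right, and your own proposed sanity checks would refute the printed form rather than confirm it: for the chain $C_d$ one has $r=d-1$, $e(Q)=1$, $e_{d-1}(Q)=d-1$ (choose which adjacent pair of elements is merged), so $2e_{d-1}(Q)=2d-2=(d+r-1)e(Q)$ while $(d-r+1)e(Q)=2$; for the antichain, $r=0$, $e(Q)=d!$, $e_{d-1}(Q)={d\choose 2}(d-1)!$, again matching $(d+r-1)$ and not $(d-r+1)$. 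Note also that the paper itself silently uses the corrected form: in the proof of Proposition~\ref{prop:bound_volume_arbitrposet}, passing from $(3-d)e(Q)+2e_{d-1}(Q)$ to $(r+2)e(Q)$ requires exactly $2e_{d-1}(Q)=(d+r-1)e(Q)$. So your argument, carried out honestly, proves a corrected version of (vi) and exposes a typo in the statement; as written, asserting that it confirms the statement verbatim is the flaw.
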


Theorem~\ref{thm:order_reciprocity}~(i) is commonly referred to as the \emph{Reciprocity Theorem for the Order Polynomial}.

\begin{defn}\label{defn:order_polytope}
The \emph{order polytope} $\OO(Q)$ of a poset $Q$ is the set of order-preserving maps from $Q$ to the interval $[0,1]$, i.e.~the set of all functions $f$ satisfying
$$\begin{array}{ll}
0\le f(x)\le 1,&\text{for all }x\in Q;\\
f(x)\le f(y),&\text{if $y$ covers $x$ in Q}.
\end{array}$$
\end{defn}

Given the bounded poset $\hat{Q}$ one can define $\hat{\OO}(Q)$ as the set of all functions $g$ such that
$$\begin{array}{rl}
&g(\hat{0})=0,\\
&g(\hat{1})=1,\\
\text{and }&g(x)\le g(y),\text{ if $y$ covers $x$ in $\hat{Q}$}.
\end{array}$$
Then the bijective linear map $\rho:\hat{\OO}(Q)\rightarrow \OO(Q)$ given by restriction to $Q$ defines a combinatorial equivalence of polytopes. Stanley was able to derive the entire facial structure of $\hat{\OO}(Q)$ (\cite[\S1]{Stan86}). In particular, the number of facets of $\OO(Q)$ is equal to the number of cover relations in $\hat{Q}$, and the number of vertices of $\OO(Q)$ is given by:
$$\abs{\{I\subset Q\mid\text{ if $x\in I$ and $y\ge x$ then $y\in I$}\}}.$$

\begin{thm}[\protect{\cite[\S4]{Stan86}}]\label{thm:order_poly_to_Ehrhart}
Let $Q$ be a finite poset with $\abs{Q}=d$. Then:
\begin{itemize}
\item[(i)] $L_{\OO(Q)}(k)=\abs{k\OO(Q)\cap {\Z}^d}=\Omega(Q,k+1)$ for each $k\in\Z$;
\item[(ii)] $\vol{\OO(Q)}=\frac{e(Q)}{d!}$.
\end{itemize}
\end{thm}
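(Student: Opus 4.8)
The plan is to prove Theorem~\ref{thm:order_poly_to_Ehrhart} by translating each statement about the order polytope $\OO(Q)$ into the combinatorics of the order polynomial $\Omega(Q,k)$, using the facial structure of $\hat{\OO}(Q)$ recorded by Stanley. Throughout I would work with the combinatorially equivalent polytope $\hat{\OO}(Q)$, since its lattice points are transparently the functions $g:\hat{Q}\to\{0,1,\ldots,k\}$ with $g(\hat 0)=0$, $g(\hat 1)=k$, and $g$ order-preserving; the restriction map $\rho$ preserves the lattice structure on $Q$.

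For part~(i), the key observation is a bijection between lattice points of the $k$-th dilation $k\OO(Q)$ and order-preserving maps $Q\to C_{k+1}$. A lattice point of $k\OO(Q)$ is precisely a function $f:Q\to\{0,1,\ldots,k\}$ satisfying $f(x)\le f(y)$ whenever $y$ covers $x$, i.e.\ an order-preserving map into the $(k+1)$-element chain $C_{k+1}$. Since $\Omega(Q,k+1)$ counts exactly these maps by definition, I would conclude $L_{\OO(Q)}(k)=\Omega(Q,k+1)$ for all $k\in\Z_{\ge0}$; the extension to all $k\in\Z$ follows because both sides are polynomials in $k$ agreeing at infinitely many values.

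For part~(ii), the plan is to combine part~(i) with the known leading behaviour of both the Ehrhart polynomial and the order polynomial. By Theorem~\ref{thm:Ehrhart_known_coefficients}~(i), the leading coefficient of $L_{\OO(Q)}(k)$ is $\vol{\OO(Q)}$. By part~(i), $L_{\OO(Q)}(k)=\Omega(Q,k+1)$, so the two polynomials share the same leading coefficient, namely $a_d$ in the notation of Theorem~\ref{thm:order_reciprocity}. Invoking Theorem~\ref{thm:order_reciprocity}~(iv), which states $a_d=e(Q)/d!$, then gives $\vol{\OO(Q)}=e(Q)/d!$ immediately.

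I do not anticipate a serious obstacle here, as both parts reduce to careful bookkeeping once the dictionary between lattice points and order-preserving maps is set up. The one point requiring genuine care is the indexing shift between $C_{k+1}$ (which has $k+1$ elements $\{0,\ldots,k\}$) and the dilation factor $k$: the subtlety is that a chain with $k+1$ values is what is needed to encode a map into a polytope dilated by $k$, and this $k\mapsto k+1$ offset is exactly what makes the statement $L_{\OO(Q)}(k)=\Omega(Q,k+1)$ rather than $\Omega(Q,k)$. Verifying this shift cleanly against Definition~\ref{defn:order_polytope} and the definition of $\Omega(Q,k)$ is the step I would check most carefully.
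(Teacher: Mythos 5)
The paper does not prove this statement at all---it is quoted directly from Stanley~\cite[\S4]{Stan86}---so there is no internal proof to compare against; your argument is correct and is essentially Stanley's original one. Identifying the lattice points of $k\OO(Q)$ with order-preserving maps $Q\to C_{k+1}$ (noting that imposing the inequalities only on cover relations is equivalent, by transitivity, to full order-preservation), then extending from $k\in\Z_{\ge 0}$ to all of $\Z$ by polynomiality, is exactly the standard route to~(i); and reading off the volume as the leading coefficient via Theorem~\ref{thm:Ehrhart_known_coefficients}~(i) together with Theorem~\ref{thm:order_reciprocity}~(iv) is sound and non-circular, since the fact $a_d=e(Q)/d!$ is a purely combinatorial result of Stanley (1970) independent of the polytope.
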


\begin{cor}\label{cor:interior}
Let $Q$ be a finite poset with order polytope $P:=\OO(Q)$. Then
$$\abs{(kP)^\circ\cap\Z^d}=\bar{\Omega}(Q,k-1),\qquad\text{ for all }k\in\Z_{>0}.$$
\end{cor}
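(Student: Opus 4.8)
The plan is to chain together the two reciprocity results at our disposal, namely Ehrhart--Macdonald reciprocity (Theorem~\ref{thm:Ehrhart-Macdonald_reciprocity}) and the Reciprocity Theorem for the order polynomial (Theorem~\ref{thm:order_reciprocity}~(i)), using the identification of the Ehrhart polynomial of $P=\OO(Q)$ with the order polynomial of $Q$ (Theorem~\ref{thm:order_poly_to_Ehrhart}~(i)) as the bridge between the two. Since every quantity involved is a polynomial in $k$, the only thing one really has to track is the index shift relating $k$ and $k-1$, together with the parity factors $(-1)^d$.

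First I would rewrite the left-hand side as $\abs{(kP)^\circ\cap\Z^d}=L_{P^\circ}(k)$ and apply Ehrhart--Macdonald reciprocity to obtain $L_{P^\circ}(k)=(-1)^dL_P(-k)$. The crucial observation is that the identity $L_P(k)=\Omega(Q,k+1)$ of Theorem~\ref{thm:order_poly_to_Ehrhart}~(i) is asserted for \emph{all} $k\in\Z$, and hence holds as an equality of polynomials; in particular we may substitute $-k$ to deduce $L_P(-k)=\Omega(Q,-k+1)=\Omega(Q,-(k-1))$. Combining these two steps gives
$$\abs{(kP)^\circ\cap\Z^d}=(-1)^d\Omega(Q,-(k-1)).$$
Applying the Reciprocity Theorem for the order polynomial in the form $\bar{\Omega}(Q,k-1)=(-1)^d\Omega(Q,-(k-1))$ then identifies the right-hand side with $\bar{\Omega}(Q,k-1)$, yielding the claim.

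The argument is essentially formal, and there is no genuine obstacle beyond the bookkeeping of signs and argument shifts. The one point I would take care to flag explicitly is that Theorem~\ref{thm:order_poly_to_Ehrhart}~(i) is valid over all of $\Z$ rather than merely for $k>0$: this is exactly what licenses evaluating the Ehrhart polynomial at the negative argument $-k$ produced by Ehrhart--Macdonald reciprocity, and it is the step where a careless reader might otherwise worry that $\abs{(-k)P\cap\Z^d}$ is not literally a lattice-point count. Everything else is a direct composition of three previously stated identities.
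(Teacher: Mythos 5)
Your proposal is correct and follows essentially the same route as the paper's own proof: Ehrhart--Macdonald reciprocity, then the identity $L_P(k)=\Omega(Q,k+1)$ valid for all $k\in\Z$, then the Reciprocity Theorem for the order polynomial, with the same bookkeeping of the shift $k\mapsto k-1$ and the sign $(-1)^d$. Your explicit remark that Theorem~\ref{thm:order_poly_to_Ehrhart}~(i) holds over all of $\Z$ is exactly the point the paper's chain of equalities implicitly relies on.
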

\begin{proof} 
This is immediate from Theorems~\ref{thm:Ehrhart-Macdonald_reciprocity},~\ref{thm:order_reciprocity}~(i), and~\ref{thm:order_poly_to_Ehrhart}~(i):
\begin{align*}
L_{P^\circ}(k)&=(-1)^dL_P(-k)\\
&=(-1)^d\Omega(Q,1-k)\\
&=\bar{\Omega}(Q,k-1),\qquad\text{ for any }k\in\Z_{\ge 0}.
\end{align*}
\end{proof}

\begin{rem}\label{rem:unique_interior_dilation}
Suppose that $Q$ is graded of rank $r$. Stanley showed~\cite[Corollary 4.5.17]{Stan97} that
$$\Omega(Q,-1)=\Omega(Q,-2)=\ldots=\Omega(Q,-r)=0,$$
and that
$$\Omega(Q,-r-1)=(-1)^d.$$
From Corollary~\ref{cor:interior} we see that $(r+2)\OO(Q)$ is the smallest integral dilation of $\OO(Q)$ with an interior lattice point; in fact $(r+2)\OO(Q)$ contains a \emph{unique} interior lattice point.
\end{rem}


\begin{prop}\label{prop:bound_volume_arbitrposet}
Let $Q$ be a poset with $\abs{Q}=d$.
Let $P:=\OO(Q)$ be the order polytope of $Q$. Then the boundary volume of $P$ is
$$\vol{\partial P}=\frac{(3-d)e(Q)+2e_{d-1}(Q)}{(d-1)!}.$$
If in addition $Q$ is a graded poset of rank $r$ then the boundary volume of $P$ is
$$\vol{\partial P}=\frac{(r+2)e(Q)}{(d-1)!}.$$
\end{prop}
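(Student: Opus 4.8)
The plan is to read off the penultimate coefficient of the Ehrhart polynomial of $P$, since Theorem~\ref{thm:Ehrhart_known_coefficients}~(ii) gives $\vol{\partial P}=2c_{d-1}$, where $L_P(m)=c_dm^d+\ldots+c_1m+c_0$. The link to the poset is Theorem~\ref{thm:order_poly_to_Ehrhart}~(i), namely $L_P(m)=\Omega(Q,m+1)$. So I would write the order polynomial as $\Omega(Q,k)=a_dk^d+a_{d-1}k^{d-1}+\ldots$ and carefully track the effect of the shift $k=m+1$ on the two highest coefficients.

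First I would extract $c_{d-1}$ in terms of the $a_i$. Expanding binomially, the coefficient of $m^{d-1}$ in $\Omega(Q,m+1)=\sum_j a_j(m+1)^j$ receives a contribution only from the two top terms: $a_d(m+1)^d$ supplies $d\,a_d$ and $a_{d-1}(m+1)^{d-1}$ supplies $a_{d-1}$. Hence
$$c_{d-1}=d\,a_d+a_{d-1}.$$
Theorem~\ref{thm:order_reciprocity}~(iv) gives $a_d=e(Q)/d!$, so $d\,a_d=e(Q)/(d-1)!$. The point to watch here is that, because we evaluate at $m+1$ rather than at $m$, the leading coefficient $a_d$ feeds into $c_{d-1}$; one cannot simply identify $c_{d-1}$ with $a_{d-1}$.

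For the general case I would compute $a_{d-1}$ from Theorem~\ref{thm:order_reciprocity}~(v), $\Omega(Q,k)=\sum_{s=1}^d e_s(Q){k\choose s}$. Expanding each ${k\choose s}$ as a falling factorial, only $s=d$ and $s=d-1$ reach degree $d-1$. Since a surjective order-preserving map $Q\to C_d$ with $\abs{Q}=d$ is precisely a linear extension, $e_d(Q)=e(Q)$, so the summand $e_d(Q){k\choose d}$ contributes $-e(Q){d\choose 2}/d!=-e(Q)/(2(d-2)!)$ to the coefficient of $k^{d-1}$ (via the sum of roots of ${k\choose d}=\frac{1}{d!}\prod_{i=0}^{d-1}(k-i)$), while $e_{d-1}(Q){k\choose d-1}$ contributes $e_{d-1}(Q)/(d-1)!$. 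Hence
$$a_{d-1}=-\frac{e(Q)}{2(d-2)!}+\frac{e_{d-1}(Q)}{(d-1)!}.$$
Substituting into $c_{d-1}=d\,a_d+a_{d-1}$, combining the $e(Q)$ terms over the denominator $(d-1)!$ using $1/(d-2)!=(d-1)/(d-1)!$, and doubling yields $\vol{\partial P}=\big((3-d)e(Q)+2e_{d-1}(Q)\big)/(d-1)!$, as required.

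For the graded case I would not substitute a value of $e_{d-1}(Q)$ into the general formula, but instead invoke Theorem~\ref{thm:order_reciprocity}~(ii) directly, which gives $a_{d-1}=re(Q)/(2(d-1)!)$. Then $c_{d-1}=e(Q)/(d-1)!+re(Q)/(2(d-1)!)=(r+2)e(Q)/(2(d-1)!)$, and doubling gives $\vol{\partial P}=(r+2)e(Q)/(d-1)!$. Comparing this with the general formula specialised to a graded poset forces the identity $2e_{d-1}(Q)=(d+r-1)e(Q)$, which serves as a reassuring internal consistency check. The proof is entirely elementary; the only real hazards are the coefficient bookkeeping in the shift $k=m+1$ and correctly reading off the subleading coefficient of ${k\choose d}$.
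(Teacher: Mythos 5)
Your proof of the general formula is essentially the paper's own argument: both of you extract $c_{d-1}=d\,a_d+a_{d-1}$ from the shift $L_P(m)=\Omega(Q,m+1)$ of Theorem~\ref{thm:order_poly_to_Ehrhart}~(i), compute $a_d=e(Q)/d!$ from Theorem~\ref{thm:order_reciprocity}~(iv) and $a_{d-1}$ from~(v) (using $e_d(Q)=e(Q)$ and the subleading coefficient $-{d\choose 2}/d!$ of ${k\choose d}$), and then double via Theorem~\ref{thm:Ehrhart_known_coefficients}~(ii). Your bookkeeping here is correct.

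The graded case is where you genuinely diverge from the paper, and to your advantage. The paper obtains the graded formula by substituting Theorem~\ref{thm:order_reciprocity}~(vi) into the general formula, whereas you invoke~(ii) directly to get $c_{d-1}=e(Q)/(d-1)!+re(Q)/(2(d-1)!)=(r+2)e(Q)/(2(d-1)!)$. Note that~(vi) as printed reads $2e_{d-1}(Q)=(d-r+1)e(Q)$; substituting this literally into the general formula would give $\vol{\partial P}=(4-r)e(Q)/(d-1)!$, not $(r+2)e(Q)/(d-1)!$, so the printed statement of~(vi) must be a misprint for $2e_{d-1}(Q)=(d+r-1)e(Q)$ --- which is exactly the identity your ``internal consistency check'' produces, and which one can confirm on the chain $C_d$ (there $r=d-1$, $e(Q)=1$, $e_{d-1}(Q)=d-1$). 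So your route through~(ii) is the more robust one: it bypasses, and in effect corrects, the misprinted identity that the paper's proof of the graded case relies on.
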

\begin{proof}
Since $L_{P}(k)=\Omega(Q,k+1)$ for each $k\in\Z$ by Theorem~\ref{thm:order_poly_to_Ehrhart}~(i), hence if we express $\Omega(Q,n)$ as
$$\Omega(Q,n)=\sum_{i=0}^d a_in^i,$$
then
$$L_{P}(n)=\Omega(Q,n+1)=a_d(n+1)^d+a_{d-1}(n+1)^{d-1}+\sum_{i=0}^{d-2} a_i(n+1)^i.$$
If we express $L_P(n)$ in the form
$$L_P(n)=\sum_{i=0}^d c_in^i$$
then $c_{d-1}=a_{d-1}+da_d$.

Using Theorem~\ref{thm:order_reciprocity}~(iv) and ~(v), we get that
$$c_{d-1}=d\frac{e(Q)}{d!}+\frac{e_{d-1}(Q)}{(d-1)!}-\frac{{d\choose 2}e(Q)}{d!}=\frac{(e(Q)(3-d)+2e_{d-1}(Q)}{2(d-1)!}.$$
But it follows from Theorem ~\ref{thm:Ehrhart_known_coefficients}~(ii), that 
$$(1/2)\vol{\partial P}=c_{d-1}=\frac{(e(Q)(3-d)+2e_{d-1}(Q)}{2(d-1)!}.$$
Combining these results gives
$$\vol{\partial P}=\frac{(3-d)e(Q)+2e_{d-1}(Q)}{(d-1)!}.$$

When $Q$ is a graded poset, applying Theorem~\ref{thm:order_reciprocity}~(vi) to the previous formula gives
$$\vol{\partial P}=\frac{(r+2)e(Q)}{(d-1)!}.$$
\end{proof}

\begin{lem}\label{lem:reflexive_order_poly}
Let $Q$ be a graded poset of rank $r$ with $\abs{Q}=d$. Then $(r+2)\OO(Q)$ is a translate of a reflexive polytope.
\end{lem}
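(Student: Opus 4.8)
The plan is to apply Theorem~\ref{thm:Gorenstein_conditions}~(iii), the criterion $d\,\vol{P}=\vol{\partial P}$ for a Fano polytope, to a suitable translate of $(r+2)\OO(Q)$. Writing $P:=\OO(Q)$, the first step is to produce the translation vector and check the Fano hypothesis. By Remark~\ref{rem:unique_interior_dilation}, $(r+2)P$ is the smallest integral dilation of $P$ possessing an interior lattice point, and that interior lattice point $v_0$ is unique. Since $P$ is a lattice polytope, so is $(r+2)P$, and hence so is $R:=(r+2)P-v_0$. By construction $R^\circ\cap\Z^d=\{0\}$, so $R$ is a $d$-dimensional Fano polytope. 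If I can show $R$ is reflexive, then $(r+2)P=R+v_0$ is a translate of a reflexive polytope, as required.

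Next I would compute the two quantities appearing in Theorem~\ref{thm:Gorenstein_conditions}~(iii). Since both the volume and the boundary volume are translation invariant, I may compute them for $(r+2)P$ in place of $R$. Volume is homogeneous of degree $d$, giving $\vol{R}=(r+2)^d\vol{P}$. For the boundary volume I would argue that dilation by $t:=r+2$ scales each facet's $(d-1)$-dimensional volume by $t^{d-1}$ while leaving the lattice determinant of its affine hull unchanged: the affine hull of $tF$ is parallel to that of $F$, so the induced sublattice has the same covolume. Summing over facets then yields $\vol{\partial R}=(r+2)^{d-1}\vol{\partial P}$. Establishing this scaling carefully is the one genuinely geometric point in the argument, and is the step I would be most careful about.

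Finally I would substitute the explicit values. Theorem~\ref{thm:order_poly_to_Ehrhart}~(ii) gives $\vol{P}=e(Q)/d!$, and, because $Q$ is graded of rank $r$, Proposition~\ref{prop:bound_volume_arbitrposet} gives $\vol{\partial P}=(r+2)e(Q)/(d-1)!$. The criterion $d\,\vol{R}=\vol{\partial R}$ then reduces, after cancelling the common factor $(r+2)^{d-1}$, to the identity $d(r+2)\vol{P}=\vol{\partial P}$, that is
\begin{equation*}
d(r+2)\frac{e(Q)}{d!}=(r+2)\frac{e(Q)}{(d-1)!},
\end{equation*}
which holds trivially. Hence Theorem~\ref{thm:Gorenstein_conditions}~(iii) applies and $R$ is reflexive, completing the proof. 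I expect no serious obstacle beyond the boundary-volume scaling; once the two volume formulas from Section~\ref{sec:order_polytopes} are in hand, the remaining arithmetic is immediate.
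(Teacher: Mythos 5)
Your proof is correct and takes essentially the same route as the paper: verify the criterion $d\,\vol{P}=\vol{\partial P}$ of Theorem~\ref{thm:Gorenstein_conditions}~(iii) for the dilated order polytope by combining $\vol{\OO(Q)}=e(Q)/d!$ (Theorem~\ref{thm:order_poly_to_Ehrhart}~(ii)) with $\vol{\partial \OO(Q)}=(r+2)e(Q)/(d-1)!$ (Proposition~\ref{prop:bound_volume_arbitrposet}) and the scaling of volume and boundary volume under dilation. If anything, you are more careful than the paper, which leaves implicit both the translation by the unique interior lattice point of $(r+2)\OO(Q)$ (Remark~\ref{rem:unique_interior_dilation}) needed to satisfy the Fano hypothesis, and the $(r+2)^{d-1}$ scaling of the boundary volume that you justify via the parallel affine hulls of the facets.
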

\begin{proof}
Let $P:=(r+2)\OO(Q)$ be the $(r+2)$-th dilate of the order polytope $\OO(Q)$ of $Q$. It is enough to prove that $d\,\vol{P}=\vol{\partial P}$. But
\begin{align*}
d\,\vol{P}&=d\,\vol{(r+2)\OO(Q)}\\
&=d(r+2)^d\vol{\OO(Q)}\\
&=d(r+2)^d\frac{e(Q)}{d!}\\
&=(r+2)^{d-1}\frac{(r+2)e(Q)}{(d-1)!}\\
&=(r+2)^{d-1}\vol{\partial \OO(Q)}\\
&=\vol{\partial P}.
\end{align*}
\end{proof}



Since $(r+2)\OO(Q)$ is a (translate of a) reflexive polytope, we can reinterpret our results from Section~\ref{sec:applications_to_reflexive} in terms of the order polytope:

\begin{cor}[c.f.~Corollary~\ref{cor:volume_for_reflexive}]\label{cor:volume_for_reflexive_order_polytope}
Let $Q$ be a finite graded poset of rank $r$ with $\abs{Q}=d$. Let $e(Q)$ denote the number of linear extensions of $Q$. Then
$$(r+2)^de(Q)=\sum_{m=0}^n(-1)^{n+m}\left({d\choose n-m}+(-1)^{d-1}{d\choose n+m+1}\right)\Omega(Q,m(r+2)+1),$$
where $n:=\floor{d/2}$. 
\end{cor}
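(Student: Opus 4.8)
The plan is to combine Corollary~\ref{cor:volume_for_reflexive} with Lemma~\ref{lem:reflexive_order_poly} and the translation between the Ehrhart polynomial of $\OO(Q)$ and the order polynomial of $Q$. Since $Q$ is graded of rank $r$, Lemma~\ref{lem:reflexive_order_poly} tells us that $P:=(r+2)\OO(Q)$ is a translate of a reflexive polytope. Translation preserves both the volume and the lattice point count of every dilation, so $P$ satisfies the conclusion of Corollary~\ref{cor:volume_for_reflexive} verbatim.

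First I would apply equation~\eqref{eq:volume_for_reflexive} to $P$, obtaining
$$\vol{P}=\frac{1}{d!}\sum_{m=0}^n(-1)^{n+m}\left({d\choose n-m}+(-1)^{d-1}{d\choose n+m+1}\right)\abs{mP\cap\Z^d}.$$
The left-hand side is immediate: by Theorem~\ref{thm:order_poly_to_Ehrhart}~(ii) and homogeneity of volume under dilation, $\vol{P}=(r+2)^d\vol{\OO(Q)}=(r+2)^d e(Q)/d!$, so multiplying through by $d!$ converts the left side into $(r+2)^d e(Q)$, matching the desired left-hand side.

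Next I would rewrite each lattice-point count on the right. Since $mP=m(r+2)\OO(Q)$, Theorem~\ref{thm:order_poly_to_Ehrhart}~(i) gives
$$\abs{mP\cap\Z^d}=L_{\OO(Q)}\bigl(m(r+2)\bigr)=\Omega\bigl(Q,m(r+2)+1\bigr).$$
Substituting this into the summation yields exactly the claimed identity, and the $d!$ factors on both sides cancel.

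The only genuine subtlety is justifying that a translate of a reflexive polytope inherits equation~\eqref{eq:volume_for_reflexive}. This is routine but worth stating: Corollary~\ref{cor:volume_for_reflexive} was proved for reflexive polytopes, yet every quantity appearing in it---the volume and the cardinalities $\abs{mP\cap\Z^d}$---is invariant under translation by a lattice vector, so the identity transports directly to $(r+2)\OO(Q)$ even though this dilate may itself sit off the origin. I expect no serious obstacle here; the proof is essentially a change of variables threading Corollary~\ref{cor:volume_for_reflexive} through the dictionary of Theorem~\ref{thm:order_poly_to_Ehrhart}.
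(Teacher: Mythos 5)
Your proposal is correct and is essentially the paper's own argument: the paper leaves this corollary as an immediate ``reinterpretation,'' and your proof fills it in exactly as intended, applying Corollary~\ref{cor:volume_for_reflexive} to $(r+2)\OO(Q)$ via Lemma~\ref{lem:reflexive_order_poly} and translating both sides through Theorem~\ref{thm:order_poly_to_Ehrhart}. Your explicit remark that lattice translation preserves $\vol{P}$ and every $\abs{mP\cap\Z^d}$ is a worthwhile addition, since the dilated order polytope is only a translate of a reflexive polytope.
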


\begin{thm}[c.f.~Theorem~\ref{thm:New_Gorenstein_relation_odd}]\label{thm:New_Gorenstein_relation_odd_order_polytope}
Let $Q$ be a finite graded poset of rank $r$ with $\abs{Q}=d$. Suppose that $d$ is odd. Then
$$\sum_{m=0}^N (-1)^{N+m}{d+2\choose N-m}\Omega(Q,m(r+2)+1)=0,$$
where $N:=\roof{d/2}$. 
\end{thm}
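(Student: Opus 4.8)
The plan is to apply Theorem~\ref{thm:New_Gorenstein_relation_odd} to the dilate $P:=(r+2)\OO(Q)$, first translating it into an honest reflexive polytope, and then to rewrite the resulting lattice-point counts as values of the order polynomial.

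First I would record that $P$ is $d$-dimensional (the order polytope $\OO(Q)$ is of maximal dimension in $\R^Q\cong\R^d$) and that, by Lemma~\ref{lem:reflexive_order_poly}, it is a translate of a reflexive polytope. To make Theorem~\ref{thm:New_Gorenstein_relation_odd} directly applicable I would pin down that translation: by Remark~\ref{rem:unique_interior_dilation} the polytope $P$ contains a \emph{unique} interior lattice point $w\in\Z^d$, so $P':=P-w$ is a lattice polytope whose unique interior lattice point is the origin, hence Fano. The identity $d\,\vol{P}=\vol{\partial P}$ established in the proof of Lemma~\ref{lem:reflexive_order_poly} is unaffected by translation, so $d\,\vol{P'}=\vol{\partial P'}$ and therefore $P'$ is reflexive by Theorem~\ref{thm:Gorenstein_conditions}~(iii). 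Since $w\in\Z^d$, translation by $-mw$ is a bijection of $\Z^d$ carrying $mP\cap\Z^d$ onto $mP'\cap\Z^d$, so that $\abs{mP'\cap\Z^d}=\abs{mP\cap\Z^d}$ for every $m\in\N$; equivalently, the Ehrhart polynomial is a lattice-translation invariant.

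Because $d$ is odd, Theorem~\ref{thm:New_Gorenstein_relation_odd} applied to the reflexive polytope $P'$ gives
$$\sum_{m=0}^N(-1)^{N+m}{d+2\choose N-m}\abs{mP'\cap\Z^d}=0,$$
and by the previous paragraph I may replace $\abs{mP'\cap\Z^d}$ with $\abs{mP\cap\Z^d}$ throughout. It then remains only to identify these counts with values of $\Omega$. For this I would invoke Theorem~\ref{thm:order_poly_to_Ehrhart}~(i): since $P=(r+2)\OO(Q)$,
$$\abs{mP\cap\Z^d}=L_{\OO(Q)}(m(r+2))=\Omega(Q,m(r+2)+1).$$
Substituting this into the displayed sum yields exactly the claimed identity.

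The argument is essentially a bookkeeping exercise once these inputs are assembled; the genuine computational work is already done inside Theorem~\ref{thm:New_Gorenstein_relation_odd}, so nothing of that kind is expected to recur here. The one point requiring care — and the step I would be most careful to state precisely — is the passage from ``$P$ is a translate of a reflexive polytope'' to a statement about $\abs{mP\cap\Z^d}$: one must note both that reflexivity can be arranged by a \emph{lattice} translation (guaranteed by the unique interior lattice point of Remark~\ref{rem:unique_interior_dilation}) and that the lattice-point enumerator appearing in Theorem~\ref{thm:New_Gorenstein_relation_odd} is invariant under such translations.
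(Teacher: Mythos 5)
Your proposal is correct and follows exactly the route the paper intends (the paper states this theorem without an explicit proof, as an immediate reinterpretation of Theorem~\ref{thm:New_Gorenstein_relation_odd} via Lemma~\ref{lem:reflexive_order_poly} and Theorem~\ref{thm:order_poly_to_Ehrhart}~(i)). Your care in making the translation step explicit---using Remark~\ref{rem:unique_interior_dilation} to get a \emph{lattice} translation vector $w$, and noting that $\abs{m(P-w)\cap\Z^d}=\abs{mP\cap\Z^d}$ because $mw\in\Z^d$---is a worthwhile detail the paper leaves implicit, but it does not change the argument.
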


\section{The Birkhoff polytope}\label{sec:Birkhoff}
Let $B(d)$ denote the \emph{Birkhoff polytope} (or \emph{transportation polytope}) of $d\times d$ doubly stochastic matrices in $\R^{d^2}$. That is, $B(d)$ is defined by
$$x_{i,j}\ge 0,\quad\sum_{i=1}^dx_{i,j}=1,\quad\sum_{j=1}^dx_{i,j}=1,\quad\text{for all }1\leq i,j\leq d.$$

Because of its rich combinatorial properties, the Birkhoff polytope has been intensively studied. In particular, methods for estimating and computing the volume and Ehrhart polynomial are of considerable interest (see~\cite{Pak00,BP03,CM09}). The following theorem summarises some of the key facts about $B(d)$:

\begin{thm} \label{thm:Birkhoff_summary}
Let $B(d)$ denote the polytope of $d\times d$ doubly stochastic matrices in $\R^{d^2}$.
Let $H_n(r)$ denote the number of $n\times n$ magic squares with linear sums equal to $r$. 
Let $P_n(r)$ denote the number of $n\times n$ positive magic squares with linear sums equal to $r$, where positive means that all entries of the matrix are positive. Then:
\begin{itemize}
\item [(i)] $\dim{B(d)}=(d-1)^2$;
\item [(ii)] $L_{B(d)}(m)=H_d(m)$ for all $d\in \Z_{>0}$ and $m\in \N$;
\item [(iii)] $L_{B(d)}(-d-t)=(-1)^{(d-1)^2}L_{B(d)}(t)$ for all $t\in \Z$;
\item [(iv)] the vertices of $B(d)$ are the permutation matrices;
\item [(v)] $L_{B(d)^{\circ}}(m)=P_d(m)$ for all $d\in \Z_{>0}$ and $m\in \Z_{>0}$.
\end{itemize}
\end{thm}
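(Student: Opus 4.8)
The plan is to handle the five parts largely independently, disposing of the definitional identifications first and reserving the reciprocity in~(iii) for last, as it is the only part requiring input beyond unwinding definitions. Throughout I regard $B(d)$ as a lattice polytope inside the lattice of its own affine hull, so that Ehrhart--Macdonald reciprocity (Theorem~\ref{thm:Ehrhart-Macdonald_reciprocity}) applies to it with sign $(-1)^{(d-1)^2}$; this is legitimate once~(iv) shows the vertices are integral, so that $B(d)$ really is a lattice polytope.

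For~(i) I would count independent constraints. The $2d$ equations $\sum_{i=1}^d x_{i,j}=1$ and $\sum_{j=1}^d x_{i,j}=1$ admit a single linear relation (summing the row equations and summing the column equations both yield $\sum_{i,j}x_{i,j}=d$), so they cut out an affine subspace of dimension $d^2-(2d-1)=(d-1)^2$; since the matrix with every entry $1/d$ satisfies all the inequalities $x_{i,j}\ge 0$ strictly, $B(d)$ is full-dimensional inside that subspace. For~(iv) I would appeal to the Birkhoff--von Neumann theorem: the equality constraints are encoded by the vertex--edge incidence matrix of the complete bipartite graph $K_{d,d}$, which is totally unimodular, so every vertex of $B(d)$ is integral; an integral doubly stochastic matrix has exactly one $1$ in each row and column and is therefore a permutation matrix, while conversely each permutation matrix is a vertex. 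Parts~(ii) and~(v) are then immediate from the definitions: a lattice point of $mB(d)$ is a nonnegative integer matrix all of whose line sums equal $m$, that is, a magic square counted by $H_d(m)$; a lattice point of the relative interior has in addition every entry strictly positive, giving exactly the positive magic squares counted by $P_d(m)$.

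For~(iii) the key step is the shift identity $P_d(m)=H_d(m-d)$: subtracting the all-ones matrix from a positive magic square with line sum $m$ produces a nonnegative magic square with line sum $m-d$, and this is a bijection, so the two polynomials agree on all large integers and hence identically. Combining this with part~(v) and Theorem~\ref{thm:Ehrhart-Macdonald_reciprocity} gives
\begin{align*}
L_{B(d)}(-t-d)&=(-1)^{(d-1)^2}L_{B(d)^{\circ}}(t+d)\\
&=(-1)^{(d-1)^2}P_d(t+d)\\
&=(-1)^{(d-1)^2}H_d(t)=(-1)^{(d-1)^2}L_{B(d)}(t),
\end{align*}
which is exactly the asserted reciprocity.

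The identifications underlying~(i), (ii), and~(v) are purely formal and carry no real difficulty. The main obstacle is~(iv): showing that the vertices are precisely the permutation matrices is the one place requiring a genuine combinatorial argument (total unimodularity of the bipartite incidence matrix, or equivalently the classical cycle-perturbation argument), rather than a direct appeal to the machinery already assembled in this paper. Once~(iv) and~(v) are in hand, the reciprocity~(iii) drops out formally from the shift identity and Ehrhart--Macdonald reciprocity.
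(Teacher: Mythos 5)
Your proposal is correct, but there is in fact nothing in the paper to compare it against: Theorem~\ref{thm:Birkhoff_summary} is presented purely as a summary of known facts, with the preceding sentence deferring to the literature (see~\cite{Pak00,BP03,CM09}; parts~(ii), (iii) and~(v) are classical results on magic squares going back to Stanley, cf.~\cite{Stan97}), and no proof is given anywhere in the paper. Your argument is a sound, self-contained reconstruction of the standard proofs: for~(i), the rank-$(2d-1)$ constraint count together with the strictly positive all-$1/d$ matrix; for~(iv), integrality of vertices via total unimodularity of the $K_{d,d}$ incidence matrix (Birkhoff--von Neumann), plus the observation that any $0/1$ point of a polytope contained in the unit cube is a vertex; for~(ii) and~(v), the definitional identifications, where your remark that the all-$1/d$ matrix is positive is exactly what guarantees that the relative interior is cut out by $x_{i,j}>0$; and for~(iii), the shift bijection $P_d(m)=H_d(m-d)$ combined with Ehrhart--Macdonald reciprocity. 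You also handled the two details that are easiest to get wrong here. First, Theorem~\ref{thm:Ehrhart-Macdonald_reciprocity} as stated applies to full-dimensional lattice polytopes, so passing to the affine lattice $\mathrm{aff}\,B(d)\cap\Z^{d^2}$ is genuinely necessary, and it is this passage that makes the sign $(-1)^{(d-1)^2}$ (rather than $(-1)^{d^2}$) correct; second, promoting the bijection $P_d(m)=H_d(m-d)$, valid for integers $m\ge d$, to a polynomial identity valid for all $t\in\Z$ requires polynomiality of $L_{B(d)}$, which in turn requires knowing $B(d)$ is a lattice polytope, i.e.\ part~(iv) --- your ordering of the parts respects this dependency. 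The one point you could make more explicit is the converse half of~(iv) (that every permutation matrix actually is a vertex), but as you note this is immediate from extremality in $[0,1]^{d^2}$.
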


In fact -- as the following two lemmas show -- it is easy to see that the $d$-th dilation of the Birkhoff polytope contains precisely one interior lattice point, and that this dilation is a translate of a reflexive polytope.

\begin{lem}\label{lem:Birk_interior}
Let $B(d)$ denote the polytope of $d\times d$ doubly stochastic matrices in $\R^{d^2}$. Then $\abs{dB(d)^\circ\cap \Z^{d^2}}=1$.
\end{lem}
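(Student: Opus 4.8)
The plan is to exhibit a single explicit interior lattice point of $dB(d)$ and then argue that no others exist. The natural candidate is the $d\times d$ matrix $J$ all of whose entries equal $1$; I would first verify that $J$ lies in the interior of $dB(d)$. Indeed, the defining inequalities of $B(d)$ are $x_{i,j}\ge 0$ together with the equalities $\sum_i x_{i,j}=1$ and $\sum_j x_{i,j}=1$; scaling by $d$, a point of $dB(d)$ is a nonnegative matrix with all row and column sums equal to $d$, and its relative interior (inside the affine span cut out by the sum conditions) consists of those matrices with all entries strictly positive. The matrix $J$ has every entry equal to $1>0$ and row/column sums equal to $d$, so $J$ is an integral point in the relative interior of $dB(d)$. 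This handles existence.

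For uniqueness I would count interior points directly using the combinatorial interpretation already available in the excerpt. By Theorem~\ref{thm:Birkhoff_summary}~(v) we have $L_{B(d)^\circ}(m)=P_d(m)$, the number of $d\times d$ positive (strictly positive integer entry) magic squares with line sums $m$; and since $\vol{dB(d)^\circ\cap\Z^{d^2}}$ counts the interior lattice points of the $d$-th dilation, this equals $L_{B(d)^\circ}(d)=P_d(d)$. So the claim reduces to the purely combinatorial statement $P_d(d)=1$: the only $d\times d$ matrix with strictly positive integer entries and all row and column sums equal to $d$ is the all-ones matrix $J$. This is clear, since each of the $d$ entries in a given row is at least $1$ and they sum to $d$, forcing every entry to equal exactly $1$.

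An alternative route, which I would mention as a cross-check, avoids the magic-square dictionary entirely and argues geometrically. The strict interior of $dB(d)$ inside its affine hull is the set of nonnegative real matrices with all entries $>0$ and all line sums equal to $d$; an integral such matrix must have every entry a positive integer, and the row-sum constraint $\sum_{j}x_{i,j}=d$ with $d$ summands each $\ge 1$ forces $x_{i,j}=1$ for all $i,j$, so $J$ is the unique interior lattice point. I would present the magic-square version as the main argument, since it ties directly into the stated Theorem~\ref{thm:Birkhoff_summary}, and note the elementary entrywise bound as the mechanism underlying both.

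I do not anticipate a serious obstacle here; the one point requiring a little care is the distinction between the interior of $B(d)$ taken in $\R^{d^2}$ and the strict interior $P^\circ$ used throughout the paper, which is the relative interior with respect to the affine hull $\mathrm{aff}\,B(d)$ (the polytope has empty interior in $\R^{d^2}$ because of the linear constraints). Once one works consistently with the relative interior and invokes Theorem~\ref{thm:Birkhoff_summary}~(v) at the dilation $m=d$, the identity $\abs{dB(d)^\circ\cap\Z^{d^2}}=P_d(d)=1$ is immediate.
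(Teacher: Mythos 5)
Your proof is correct and follows essentially the same route as the paper: both invoke Theorem~\ref{thm:Birkhoff_summary}~(v) at the dilation $m=d$ to identify $\abs{dB(d)^\circ\cap\Z^{d^2}}$ with $P_d(d)$, and then observe that a positive integer matrix whose rows each sum to $d$ must be the all-ones matrix, so $P_d(d)=1$. The explicit existence check and the remark about relative versus ambient interior are fine additions but not substantively different from the paper's argument.
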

\begin{proof}
Using Theorem~\ref{thm:Birkhoff_summary}~(v),
$$\abs{dB(d)^\circ\cap \Z^{d^2}}=L_{B(d)^{\circ}}(d)=P_d(d).$$
But if $Q$ is a $d\times d$ positive magic square whose lines sum to $d$, then $Q$ must be the matrix with all entries equal to one. Hence $P_d(d)=1$.
\end{proof}

\begin{lem}\label{lem:Birkreflexive_pol}
Let $P:=dB(d)-Q$ denote the translation of the $d$-th dilate of the Birkhoff polytope by $Q$, where $Q$ is the matrix with all entries equal to one. Then $P$ is a reflexive polytope.
\end{lem}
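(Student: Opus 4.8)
The plan is to verify reflexivity via the Gorenstein characterisation of Theorem~\ref{thm:Gorenstein_conditions}~(ii), after first establishing that $P$ is Fano and relating its Ehrhart polynomial to that of $B(d)$. Write $d':=\dim{P}=(d-1)^2$ (Theorem~\ref{thm:Birkhoff_summary}~(i)). By Lemma~\ref{lem:Birk_interior} the only interior lattice point of $dB(d)$ is the all-ones matrix $Q$, so after translating by $-Q$ the origin becomes the unique interior lattice point of $P$; hence $P$ is Fano. Since $Q$ is a lattice point, each dilate $mP=(md)B(d)-mQ$ is a lattice translate of $(md)B(d)$, and therefore $L_P(m)=L_{B(d)}(md)$ for every $m\in\N$. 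As both sides are polynomials agreeing at infinitely many arguments, the identity $L_P(x)=L_{B(d)}(dx)$ holds as an equality of Ehrhart polynomials, and in particular $L_P(-m)=L_{B(d)}(-dm)$.

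Next I would reduce the reflexivity condition to a single symmetry of $L_P$. Because $L_P(m)=L_{\partial P}(m)+L_{P^\circ}(m)$, the criterion of Theorem~\ref{thm:Gorenstein_conditions}~(ii) is equivalent to $L_{P^\circ}(m)=L_P(m-1)$ for all $m\in\Z_{>0}$. Applying Ehrhart--Macdonald reciprocity (Theorem~\ref{thm:Ehrhart-Macdonald_reciprocity}) in the form $L_{P^\circ}(m)=(-1)^{d'}L_P(-m)$, it suffices to prove
$$L_P(m-1)=(-1)^{d'}L_P(-m),\qquad\text{for all }m\in\Z_{>0}.$$

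Finally I would verify this identity directly from the functional equation for $B(d)$. Substituting the polynomial relation $L_P(k)=L_{B(d)}(kd)$ gives $(-1)^{d'}L_P(-m)=(-1)^{d'}L_{B(d)}(-md)$, while Theorem~\ref{thm:Birkhoff_summary}~(iii) with $t=(m-1)d$ reads $L_{B(d)}(-md)=(-1)^{d'}L_{B(d)}((m-1)d)$. Combining these, the factors $(-1)^{d'}$ cancel and the right-hand side becomes $L_{B(d)}((m-1)d)=L_P(m-1)$, as required; by Theorem~\ref{thm:Gorenstein_conditions} the Fano polytope $P$ is therefore reflexive. The only genuinely delicate points are the bookkeeping that commutes dilation with the lattice translation by $-Q$ (so that $L_P(m)=L_{B(d)}(md)$ exactly, rather than up to a shift), and keeping track of the parity $(-1)^{(d-1)^2}$ so that it matches both the reciprocity exponent $d'$ and the exponent in Theorem~\ref{thm:Birkhoff_summary}~(iii); everything else is a formal manipulation. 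One should also note that all Ehrhart-theoretic statements are taken with respect to the induced lattice $\Z^{d^2}\cap\mathrm{aff}\,P$ inside the affine hull of $P$, since $P$ is not full-dimensional in $\R^{d^2}$.
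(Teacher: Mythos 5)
Your proposal is correct and takes essentially the same route as the paper: reduce reflexivity via Theorem~\ref{thm:Gorenstein_conditions}~(ii) and Ehrhart--Macdonald reciprocity to the identity $(-1)^{(d-1)^2}L_P(-m)=L_P(m-1)$, and then deduce this from Theorem~\ref{thm:Birkhoff_summary}~(iii) with $t=d(m-1)$, using that translation by the lattice point $Q$ leaves all lattice-point counts unchanged. Your extra remarks (verifying the Fano condition from Lemma~\ref{lem:Birk_interior}, extending the counting identity to a polynomial identity, and working in the induced lattice of the affine hull) merely make explicit points the paper's proof leaves implicit.
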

\begin{proof}
From Theorem~\ref{thm:Gorenstein_conditions}~(i) and~(ii) it is enough to show that 
$$(-1)^{(d-1)^2}L_{dB(d)}(-m)=L_{dB(d)}(m-1)$$
for all $m\in \Z_{>0}$. But setting $t=d(m-1)$ in Theorem~\ref{thm:Birkhoff_summary}~(iii) gives:
$$L_{dB(d)}(m)=L_{B(d)}(dm)=(-1)^{(d-1)^2}L_{B(d)}(d(m-1))=(-1)^{(d-1)^2}L_{dB(d)}(m-1).$$
\end{proof}

We can now reinterpret our results in Section~\ref{sec:applications_to_reflexive} in terms of the Birkhoff polytope. In particular an explicit formula for the volume of the Birkhoff polytope is given in terms of the the first $\floor{(d-1)^2/2}$ dilations.

\begin{cor}[c.f.~Corollary~\ref{cor:volume_bound}]\label{cor:Birk_Ehrhart}
Let $B(d)$ denote the polytope of $d\times d$ doubly stochastic matrices in $\R^{d^2}$. Then 
$$((d-1)^2)!\,d^{(d-1)^2}\vol{B(d)}\geq (d-1)^2H_d(d)-(d-1)^2+3$$
\end{cor}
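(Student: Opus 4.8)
The plan is to deduce this directly from Corollary~\ref{cor:volume_bound}, applied not to $B(d)$ itself but to the reflexive polytope produced in Lemma~\ref{lem:Birkreflexive_pol}. Write $D:=(d-1)^2$ for the dimension of the Birkhoff polytope, and work throughout with the lattice induced on $\mathrm{aff}(B(d))$ by $\Z^{d^2}$; with respect to this lattice $B(d)$ is a full-dimensional lattice polytope and all of the Ehrhart data in Theorem~\ref{thm:Birkhoff_summary} is computed. Note that $B(d)$ itself cannot be fed into Corollary~\ref{cor:volume_bound}, since a doubly stochastic integer matrix is a permutation matrix and hence lies on $\partial B(d)$, so $B(d)^\circ\cap\Z^{d^2}=\emptyset$; this is precisely why we must pass to a dilation.

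First I would set $P:=dB(d)-Q$, where $Q$ is the all-ones matrix. By Lemma~\ref{lem:Birkreflexive_pol} this $P$ is reflexive, and by Lemma~\ref{lem:Birk_interior} it contains a (unique) interior lattice point, so $\abs{P^\circ\cap\Z^{d^2}}=1>0$ and the hypothesis of Corollary~\ref{cor:volume_bound} is met.

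Next I would record the two quantities that enter the bound. Because $Q$ is an integral matrix, $P$ is a lattice translate of $dB(d)$, so it has the same normalised volume and the same number of lattice points. Using $\vol{mR}=m^{\dim R}\vol{R}$ we get
$$\vol{P}=\vol{dB(d)}=d^{D}\vol{B(d)},$$
and by Theorem~\ref{thm:Birkhoff_summary}~(ii),
$$\abs{P\cap\Z^{d^2}}=\abs{dB(d)\cap\Z^{d^2}}=L_{B(d)}(d)=H_d(d).$$

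Finally I would substitute these into Corollary~\ref{cor:volume_bound} for the $D$-dimensional polytope $P$, namely $D!\,\vol{P}\ge(D-1)\abs{P\cap\Z^{d^2}}-D^2+3$, and rewrite everything in terms of $d$ using $D=(d-1)^2$. There is no deep obstacle here: the argument is a translation-invariance-plus-scaling bookkeeping exercise, and the only points requiring genuine care are, first, that every volume and lattice count is taken with respect to the induced lattice on $\mathrm{aff}(B(d))$ rather than the ambient $\Z^{d^2}$, and second, that the substitution $D=(d-1)^2$ into $(D-1)\abs{P\cap\Z^{d^2}}-D^2+3$ reproduces exactly the right-hand side as stated. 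This last verification is the step I would check most carefully.
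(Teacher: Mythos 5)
Your strategy is exactly the one the paper intends: the paper gives this corollary no explicit proof, but the ``c.f.'' pointer together with Lemmas~\ref{lem:Birk_interior} and~\ref{lem:Birkreflexive_pol} makes clear that one is meant to feed the reflexive translate $P:=dB(d)-Q$ into Corollary~\ref{cor:volume_bound}. Your intermediate steps are all sound: $B(d)$ itself indeed has no interior lattice points (its lattice points are the permutation matrices, which are vertices), so one must pass to the dilate; working in the lattice induced on $\mathrm{aff}\,B(d)$ one has $\vol{P}=d^{D}\vol{B(d)}$ and $\abs{P\cap\Z^{d^2}}=H_d(d)$, where $D:=(d-1)^2$; and the hypothesis $\abs{P^\circ\cap\Z^{d^2}}=1>0$ holds.

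However, the step you yourself flagged as the one to ``check most carefully'' is precisely where the argument fails: the substitution does \emph{not} reproduce the right-hand side as stated. Corollary~\ref{cor:volume_bound} applied to the $D$-dimensional polytope $P$ yields
\begin{align*}
((d-1)^2)!\,d^{(d-1)^2}\vol{B(d)}=D!\,\vol{P}&\ \ge\ (D-1)\abs{P\cap\Z^{d^2}}-D^2+3\\
&\ =\ \left((d-1)^2-1\right)H_d(d)-(d-1)^4+3,
\end{align*}
whereas the statement to be proved has right-hand side $(d-1)^2H_d(d)-(d-1)^2+3=D\,H_d(d)-D+3$. The stated bound is strictly stronger (the two right-hand sides differ by $H_d(d)+D(D-1)>0$), so it cannot be obtained from Corollary~\ref{cor:volume_bound} by this substitution; in fact it is false in general. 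For $d=2$ the polytope $B(2)$ is a primitive lattice segment, so the left-hand side equals $1!\cdot 2^{1}\cdot 1=2$, while $H_2(2)=3$ makes the stated right-hand side $1\cdot 3-1+3=5$; by contrast the corrected bound reads $2\ge 0\cdot 3-1+3=2$, with equality. (The printed statement appears to arise from substituting the dimension $(d-1)^2$ into the factorial only, while leaving the terms $d-1$ and $d^2$ of Corollary~\ref{cor:volume_bound} essentially unsubstituted.) So your argument, carried to completion, proves the corrected inequality displayed above rather than the statement as given; no argument along these lines can prove the statement as given, since it fails at $d=2$.
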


\begin{cor}[c.f.~Corollary~\ref{cor:volume_for_reflexive}]\label{cor:Birk_volume}
Let $B(d)$ denote the polytope of $d\times d$ doubly stochastic matrices in $\R^{d^2}$. Then 
\begin{equation}\label{eq:volume_for_Birkhoff}
\vol{B(d)}=\frac{1}{((d-1)^2)!d^{(d-1)^2}}\sum_{m=0}^n(-1)^{n+m}\left({(d-1)^2\choose n-m}+(-1)^d{(d-1)^2\choose n+m+1}\right)H_d(md),
\end{equation}
where $n:=\floor{(d-1)^2/2}$.
\end{cor}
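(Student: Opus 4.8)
The plan is to deduce this directly from Corollary~\ref{cor:volume_for_reflexive} applied to the reflexive polytope produced in Lemma~\ref{lem:Birkreflexive_pol}. Set $D:=(d-1)^2=\dim{B(d)}$ (using Theorem~\ref{thm:Birkhoff_summary}~(i)), let $Q$ be the all-ones matrix, and let $P:=dB(d)-Q$. By Lemma~\ref{lem:Birkreflexive_pol} this $P$ is a $D$-dimensional reflexive polytope, where throughout we work inside the affine hull of $B(d)$ together with its induced lattice, so that $P$ is of full dimension $D$ in the relevant lattice. Corollary~\ref{cor:volume_for_reflexive} then gives, with $n:=\floor{D/2}=\floor{(d-1)^2/2}$,
$$\vol{P}=\frac{1}{D!}\sum_{m=0}^n(-1)^{n+m}\left({D\choose n-m}+(-1)^{D-1}{D\choose n+m+1}\right)\abs{mP\cap\Z^{d^2}}.$$
It then remains to translate each of the three ingredients of this formula back into the language of $B(d)$ and $H_d$.

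First I would address the volume prefactor. Since translation preserves volume and dilation by $d$ scales the $D$-dimensional (lattice-normalised) volume by $d^D$, we have $\vol{P}=d^D\vol{B(d)}$, so that the leading constant becomes $1/(D!\,d^D)=1/\big(((d-1)^2)!\,d^{(d-1)^2}\big)$, exactly as required. Next I would handle the lattice-point counts. Because $mQ$ is an integer matrix, the translation by $-mQ$ is a lattice translation and hence preserves the number of lattice points; thus
$$\abs{mP\cap\Z^{d^2}}=\abs{(md\,B(d)-mQ)\cap\Z^{d^2}}=\abs{md\,B(d)\cap\Z^{d^2}}=L_{B(d)}(md)=H_d(md),$$
using Theorem~\ref{thm:Birkhoff_summary}~(ii). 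The $m=0$ term is consistent with the convention $H_d(0)=1$, corresponding to the single zero matrix (equivalently, the unique interior point guaranteed by Lemma~\ref{lem:Birk_interior} after recentring).

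The last point, and the only one requiring a small computation rather than a direct substitution, is the sign: Corollary~\ref{cor:volume_for_reflexive} produces $(-1)^{D-1}$, whereas the target formula carries $(-1)^d$. Here I would use the parity identity $k^2\equiv k\pmod 2$ to compute
$$(-1)^{D-1}=(-1)^{(d-1)^2-1}=(-1)^{(d-1)-1}=(-1)^{d-2}=(-1)^d,$$
so the two signs agree. Substituting the three identities for the prefactor, the lattice counts, and the sign into the displayed instance of Corollary~\ref{cor:volume_for_reflexive}, and dividing through by $d^D$, yields equation~\eqref{eq:volume_for_Birkhoff}. I do not expect any genuine obstacle: the argument is essentially a change of variables, and the only care needed is in tracking the full-dimensionality of $P$ within the sublattice of $B(d)$ so that Corollary~\ref{cor:volume_for_reflexive} applies verbatim, and in the elementary parity check above.
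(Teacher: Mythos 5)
Your proposal is correct and is exactly the argument the paper intends (the paper states this as an immediate corollary of Corollary~\ref{cor:volume_for_reflexive} via Lemma~\ref{lem:Birkreflexive_pol}, without writing out the details): apply the reflexive volume formula to $P=dB(d)-Q$ in dimension $D=(d-1)^2$, use lattice-translation invariance and Theorem~\ref{thm:Birkhoff_summary}~(ii) to get $\abs{mP\cap\Z^{d^2}}=H_d(md)$, scale the volume by $d^D$, and check $(-1)^{D-1}=(-1)^d$. Your explicit attention to the parity identity and to working in the affine hull with its induced lattice (where $P$ is full-dimensional) supplies precisely the details the paper leaves implicit.
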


\begin{thm}[c.f.~Theorem~\ref{thm:New_Gorenstein_relation_odd}]\label{thm:Birk_equation}
Suppose that $d$ is even. Then
\begin{equation}\label{eq:reflexive_Birkhoff}
\sum_{m=0}^N (-1)^{N+m}{d^2-2d+3\choose N-m}H_d(dm)=0,
\end{equation}
where $N:=\roof{(d-1)^2/2}$. 
\end{thm}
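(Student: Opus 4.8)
The plan is to apply Theorem~\ref{thm:New_Gorenstein_relation_odd} to the reflexive polytope $P=dB(d)-Q$ constructed in Lemma~\ref{lem:Birkreflexive_pol}. The key observation is that $P$ is a $d'$-dimensional reflexive polytope with $d':=\dim B(d)=(d-1)^2$ by Theorem~\ref{thm:Birkhoff_summary}~(i), and that when the outer dimension $d$ is even, the inner dimension $d'=(d-1)^2$ is odd. Thus Theorem~\ref{thm:New_Gorenstein_relation_odd}, which applies precisely to odd-dimensional reflexive polytopes, is available for $P$.

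First I would substitute $d'=(d-1)^2$ into the statement of Theorem~\ref{thm:New_Gorenstein_relation_odd}. That theorem gives, for an odd-dimensional reflexive polytope of dimension $d'$,
$$\sum_{m=0}^N(-1)^{N+m}{d'+2\choose N-m}\abs{mP\cap\Z^{d'}}=0,$$
where $N=\roof{d'/2}=\roof{(d-1)^2/2}$, matching the $N$ in the target statement. Since $d'+2=(d-1)^2+2=d^2-2d+3$, the binomial coefficient becomes exactly ${d^2-2d+3\choose N-m}$, as required. The remaining task is to identify $\abs{mP\cap\Z^{d'}}$ with $H_d(dm)$.

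The identification of lattice-point counts proceeds through the definition $P=dB(d)-Q$. Translation by the integral vector $Q$ (the all-ones matrix) preserves lattice-point counts, so $\abs{mP\cap\Z^{d'}}=\abs{m\,dB(d)\cap\Z^{d'}}=L_{dB(d)}(m)=L_{B(d)}(dm)$, and by Theorem~\ref{thm:Birkhoff_summary}~(ii) this equals $H_d(dm)$. One should note here that translating $mP$ by the integer vector $mQ$ is what makes each dilate integral and lattice-count-preserving; the earlier lemmas guarantee $P$ is genuinely reflexive (hence Fano, so the hypothesis of Theorem~\ref{thm:New_Gorenstein_relation_odd} is met). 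Assembling these substitutions yields equation~\eqref{eq:reflexive_Birkhoff} directly.

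I expect the main obstacle to be purely bookkeeping rather than conceptual: one must check the parity condition carefully, confirming that $(d-1)^2$ is odd exactly when $d$ is even, so that the odd-dimensional theorem genuinely applies under the stated hypothesis. A secondary point worth stating explicitly is that the sign convention $(-1)^{N+m}$ and the index range $0\le m\le N$ transfer verbatim from Theorem~\ref{thm:New_Gorenstein_relation_odd} once $d'$ replaces $d$; no re-derivation of the binomial identities in that theorem's proof is needed here, since we invoke its conclusion as a black box.
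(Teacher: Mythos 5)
Your proposal is correct and matches the paper's (implicit) argument exactly: the paper states Theorem~\ref{thm:Birk_equation} as a direct reinterpretation of Theorem~\ref{thm:New_Gorenstein_relation_odd} applied to the reflexive polytope $P=dB(d)-Q$ of Lemma~\ref{lem:Birkreflexive_pol}, whose dimension $(d-1)^2$ is odd precisely when $d$ is even, with the lattice-point identification $\abs{mP\cap\Z^{d^2}}=L_{B(d)}(dm)=H_d(dm)$ via Theorem~\ref{thm:Birkhoff_summary}~(ii). Your bookkeeping ($d'+2=d^2-2d+3$, $N=\roof{(d-1)^2/2}$, translation by the integral vector $mQ$ preserving lattice counts) is exactly what the paper leaves to the reader.
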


\begin{acknowledge}
The authors would like to thank Josef Schicho and Benjamin Nill for many helpful remarks, and Don Taylor for alerting them to~\cite{Tur66}. This work was completed whilst the second author was a guest at \textsc{Ricam}.
\end{acknowledge}
\bibliographystyle{amsalpha}
\newcommand{\etalchar}[1]{$^{#1}$}
\providecommand{\bysame}{\leavevmode\hbox to3em{\hrulefill}\thinspace}
\providecommand{\MR}{\relax\ifhmode\unskip\space\fi MR }
\providecommand{\MRhref}[2]{%
  \href{http://www.ams.org/mathscinet-getitem?mr=#1}{#2}
}
\providecommand{\href}[2]{#2}

\end{document}